\setlist[itemize]{wide = 0pt, labelwidth = 2em, labelsep*=0em, itemindent = 0pt, leftmargin = \dimexpr\labelwidth + \labelsep\relax, noitemsep,topsep = 1ex,}
\setlist[enumerate]{wide = 0pt, labelwidth = 2em, labelsep*=0em, itemindent = 0pt, leftmargin = \dimexpr\labelwidth + \labelsep\relax, noitemsep,topsep = 1ex}
\definecolor{hot}{RGB}{65,105,225}
\newtheorem{theorem}{Theorem}[section]
\newtheorem{proposition}[theorem]{Proposition}
\newtheorem{claim}[theorem]{Claim}
\newtheorem{corollary}[theorem]{Corollary}
\newtheorem{conj}[theorem]{Conjecture}
\newtheorem{lemma}[theorem]{Lemma}
\theoremstyle{definition}
\newtheorem{definition}[theorem]{Definition}
\newtheorem{question}[theorem]{Question}
\newtheorem{remark}[theorem]{Remark}
\newtheorem{example}[theorem]{Example}
\newtheorem*{ex*}{Example}
\theoremstyle{plain}
\newlist{thmlist}{enumerate}{1}
\setlist[thmlist]{wide = 0pt, labelwidth = 2em, labelsep*=0em, itemindent = 0pt, leftmargin = \dimexpr\labelwidth + \labelsep\relax, noitemsep,topsep = 1ex, font=\normalfont, label=(\roman*), ref=\thetheorem.(\roman{thmlisti})}
\crefname{lemma}{Lemma}{Lemmas} 
\crefname{conjecture}{Conjecture}{Conjectures}
\crefname{theorem}{Theorem}{Theorems}
\crefname{proposition}{Proposition}{Propositions}
\crefname{definition}{Definition}{Definitions}
\crefname{remark}{Remark}{Remarks}
\crefname{corollary}{Corollary}{Corollaries}
\newcommand\sP{{\mathcal P}}
\newcommand\cP{{\mathcal P}}
\newcommand\sF{{\mathcal F}}
\newcommand\GL{\mathrm{GL}}
\def\R{\mathbb{R}}
\def\Z{\mathbb{Z}}
\def\bV{\mathbb{V}}
\DeclareMathOperator{\Supp}{Supp}                % Supp
\DeclareMathOperator{\reg}{reg}                  % reg
\DeclareMathOperator{\norm}{norm}
\DeclareMathOperator{\univ}{univ}
\DeclareMathOperator{\im}{Im}
\def\C{\mathbb{C}}
\def\bR{\mathbb{R}}
\def\cM{{\mathscr{M}}}
\def\End{{\rm {End}}}
\def\bC{\mathbb{C}}
\def\bP{\mathbb{P}}
\def\cD{\mathscr{D}}
\def\cO{\mathcal{O}}
\def\Q{\mathbb{Q}}
\author[Y. Deng]{Ya Deng}
\address{CNRS, Institut \'Elie Cartan de Lorraine, Universit\'e de Lorraine, Site de
	Nancy,   54506 Vand\oe uvre-lès-Nancy, France} 
\email{ya.deng@math.cnrs.fr}
\urladdr{https://ydeng.perso.math.cnrs.fr} 
\author[B. Wang]{Botong Wang}
\address{Department of Mathematics, University of Wisconsin-Madison, 480 Lincoln Drive, Madison WI 53706-1388, USA}
\email {wang@math.wisc.edu}
\urladdr{https://people.math.wisc.edu/~bwang274/}
\title[Linear Chern-Hopf-Thurston conjecture]{Linear Chern-Hopf-Thurston conjecture}
\begin{document} 
	\begin{abstract}
		If \(X\) is a closed \(2n\)-dimensional aspherical manifold, i.e., the universal cover of \(X\) is contractible, then the Chern-Hopf-Thurston conjecture predicts that \((-1)^n\chi(X)\geq 0\). We prove this conjecture when \(X\) is a complex projective manifold whose fundamental group admits an almost faithful linear representation over any field. In fact, we prove a much stronger statement that if \(X\) is a complex projective manifold with large fundamental group and \(\pi_1(X)\) admits an almost faithful linear representation, then \(\chi(X, \mathcal{P})\geq 0\) for any perverse sheaf \(\mathcal{P}\) on \(X\). 
		
		To prove this, we introduce a vanishing cycle functor of multivalued one-forms and apply techniques from non-abelian Hodge theory, both in archimedean and non-archimedean settings. These techniques allow us to deduce the desired positivity from the geometric properties of pure and mixed period maps.
		%To prove this, we introduce a vanishing cycle functor of multivalued one-forms. Then using techniques from non-abelian Hodge theories in both archimedean and non-archimedean settings, we deduce the desired positivity from the geometry of pure and mixed period maps. 
	\end{abstract}
	\maketitle
	\tableofcontents
	
	\section{Introduction}
	%	\subsection{Conjectures and main theorems}
	In the 1930s, Hopf conjectured that a closed $2n$-dimensional compact Riemannian manifold
	$X$ with non-positive sectional curvature satisfies $(-1)^n\chi(X)\geq 0$ (see \cite[Problem 10]{Yau}). %Singer generalized Hopf's conjecture to aspherical manifolds and predicts the following. 
	This conjecture follows from the Gauss-Bonnet theorem when $n=2$ (see \cite{C}), but it remains widely open in higher dimensions. 
	By the Cartan-Hadamard theorem, a compact Riemannian manifold $X$ with non-positive sectional curvature is aspherical, i.e., the universal cover of $X$ is contractible. Thurston, therefore, proposed the following generalization of Hopf's conjecture (see \cite[Problem 4.10]{Kirby} and \cite{Davis}).
%	Singer proposed to study Hopf's conjecture using $L^2$-cohomology groups of the universal cover of $X$, which naturally lead to the following more general conjecture.  
	\begin{conj}[Chern-Hopf-Thurston]\label{conj:Chern-Hopf-Thurston}
		Let $X$ be a closed $2n$-dimensional manifold. If $X$ is aspherical,  then 
		\[
		(-1)^n\chi(X)\geq 0.
		\]
	\end{conj}
	In this general form, the conjecture is not even known in dimension 4. Charney and Davis also considered the Chern-Hopf-Thurston conjecture for piecewise Euclidean manifolds, which lead to the Charney-Davis conjecture about flag complexes (see \cite{CD}). Singer proposed to study the Chern-Hopf-Thurston conjecture using $L^2$-cohomology groups of the universal cover of $X$, leading to a stronger conjecture also involving odd-dimensional manifolds: if $X$ is a closed aspherical manifold of dimension $d$, then $b^{(2)}_k(\widetilde{X})=0$ for $k\neq d/2$, where $\widetilde{X}$ is the universal cover of $X$ (see \cite[Chapter 11]{Luck} for an overview). 
	
	%The above conjecture was also considered by Thurston for 4-manifolds (\cite[Problem 4.10]{Kirby}).	When $n=1$, \Cref{conj:Chern-Hopf-Thurston} follows from the easy fact that a Riemann surface is aspherical if and only if its genus is at least 1. 
%	If we know that $X$ admits a Riemannian metric with non-positive sectional curvature, then the conjecture follows from the Gauss-Bonnet theorem when $n=2$ (see \cite{C}). Beyond this case, the conjecture is widely open. 
	
Assuming $X$ is a complex projective manifold, several advances have been made (see \cite{Gr, JZ, CX, DCL, LMW2, AW,LP24}, among others). 	In \cite{LMW2}, it was observed that for projective manifolds, a natural generalization of asphericity is the property of having large fundamental group. Recall that a projective manifold $X$ is said to \emph{have large fundamental group} if for any irreducible subvariety $Z\subset X$, the image of the composition $\pi_1(Z_{\norm})\to \pi_1(Z)\to \pi_1(X)$ is infinite, where $Z_{\norm}$ denotes the normalization of $Z$. Furthermore, the assertion that $(-1)^n\chi(X)\geq 0$ was strengthened to 
	\begin{equation}\label{eq_statement}
		\chi(X, \sP)\geq 0 \text{\;for any perverse sheaves $\sP$ on $X$. }
	\end{equation}
	These ideas were further pursued in \cite{AW}. Although not explicitly stated, the following conjecture was expected to hold. 
	%, where in the presence of some special representation of $\pi_1(X)$, it is proved that having large fundamental group implies \eqref{eq_statement}. So 
	\begin{conj}\label{Conj:AW}
		If $X$ is a complex projective manifold with large fundamental group, then 
		$\chi(X, \sP)\geq 0$
		for any perverse sheaves $\sP$ on $X$. 
	\end{conj}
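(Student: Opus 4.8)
The plan is to prove the theorem announced in the abstract, i.e.\ the conjecture above under the additional hypothesis that $\pi_1(X)$ admits an almost faithful linear representation $\rho$, by a double induction on $n=\dim X$ and on $\dim\mathrm{supp}\,\sP$. Since Euler characteristics are additive on short exact sequences of perverse sheaves, and every composition factor of $\sP$ is an intermediate extension $\mathrm{IC}_{\overline Z}(\mathbb L)$ of a local system $\mathbb L$ on a subvariety $Z\subseteq X$ --- which, when $Z\subsetneq X$, is pushed forward from a perverse sheaf on the lower-dimensional $\overline Z$, itself a projective variety with large fundamental group --- everything reduces to the case where $\sP$ is generically a local system on $X$. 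For such $\sP$ I would express $\chi(X,\sP)$ as a \emph{localized} Euler characteristic on a proper closed subvariety by means of a vanishing cycle functor $\phi_\omega$ of a closed holomorphic \emph{multivalued} one-form $\omega$, chosen --- via non-abelian Hodge theory --- so that the positivity of the localized contribution becomes visible. When $n\le1$ the statement is classical: a perverse sheaf on a point has non-negative Euler characteristic, and largeness forces a curve to have positive genus, whence the claim.

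Next I would develop the functor. A closed holomorphic multivalued one-form is a closed section $\omega$ of $\Omega^1_X\otimes\mathbb L$ for a rank-one local system $\mathbb L$; locally $\omega=df$ for a holomorphic primitive $f$, and one glues Deligne's vanishing cycle functors of the local primitives, twisted by $\mathbb L$, into an exact functor $\phi_\omega$ sending perverse sheaves on $X$ to perverse sheaves (up to shift) supported on the characteristic locus $\{x\in X:(x,\omega_x)\in\mathrm{Ch}(\sP)\}$. The property to establish is the Euler characteristic identity $\chi(X,\sP)=\chi(X,\phi_\omega\sP)$: this is a sheaf-theoretic Novikov--Morse principle, valid because $X$ is compact Kähler and $\omega$ holomorphic, so the multivalued primitive is tame at infinity on the relevant covering and there is no contribution away from the critical set $Z(\omega)$, while twisting by $\mathbb L$ leaves Euler characteristics unchanged. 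When $\sP$ is generically a local system and $\omega$ does not vanish identically on the locus where $\sP$ is lisse, the characteristic locus is a proper closed subvariety $W\subsetneq X$, so $\chi(X,\sP)=\chi(W,\phi_\omega\sP)$ both lowers the ambient dimension and reduces the problem to the non-negativity of the local intersection numbers of the graph $\Gamma_\omega\subseteq T^*X$ with the components of the characteristic cycle $\mathrm{CC}(\sP)$, whose coefficients are non-negative by Kashiwara's theorem.

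The construction of $\omega$, and the sign of those local numbers, is where non-abelian Hodge theory and largeness enter. After splitting $\rho$, up to finite kernel, into archimedean and non-archimedean constituents --- a finitely generated linear group that is bounded at every place is finite --- one argues in two cases. If $\rho$ is unbounded at an archimedean place, Corlette--Simpson gives a harmonic bundle with non-zero Higgs field $\theta$, $\theta\wedge\theta=0$; its eigen-one-forms are closed holomorphic multivalued one-forms cutting out a Lagrangian spectral variety in $T^*X$, and, through Simpson's $\C^*$-action and the theory of admissible variations, the reductive part of $\rho$ produces a \emph{pure} period map and the unipotent radical a \emph{mixed} one, whose differentials are recorded by these forms. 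If $\rho$ is unbounded only at non-archimedean places, Gromov--Schoen produces a pluriharmonic map to a Bruhat--Tits building, which either has Euclidean directions yielding honest holomorphic one-forms or factors through a fibration over a lower-dimensional base, the latter handled by induction on the base and the general fibre. Largeness of $\pi_1(X)$ together with almost faithfulness of $\rho$ forces the combined (pure, mixed, archimedean, non-archimedean) period map to be generically immersive --- the reductive Shafarevich morphism of $\rho$ contracts no positive-dimensional subvariety --- so that a generic linear combination $\omega$ of the available eigen-one-forms has characteristic locus of dimension $<n$, and the non-positive curvature of the period domains in the horizontal directions, together with the analogous convexity on the building, makes the relevant Morse indices definite and hence the local intersection numbers non-negative.

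I expect the main obstacle to be exactly this last step, combined with the bookkeeping of the induction. First, one must upgrade the pointwise Hodge-theoretic negativity of pure period domains, and the convexity of buildings, into a clean non-negative sign for the local contributions of $\Gamma_\omega$ against each $\overline{T^*_{Z_\alpha}X}$, uniformly enough to iterate and, harder, in the \emph{mixed} setting where the target is no longer a classical period domain --- this is the technical heart, and it is why both flavours of non-abelian Hodge theory are indispensable, the Albanese map alone providing too few one-forms and possibly contracting subvarieties. Second, since restricting $\rho$ to a subvariety destroys almost faithfulness, the induction cannot be run on ``$Z$ together with $\rho|_{\pi_1(Z)}$''; instead one phrases the inductive statement in terms of admitting a quasi-finite pure-and-mixed period map --- a property visibly inherited by subvarieties, since restricting an immersion to a submanifold is again an immersion --- shows that largeness plus linearity implies it, and carries the non-archimedean factors along a descending induction over the places of the field of definition of $\rho$. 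Granting these, the induction closes: $\chi(X,\sP)=\chi(W,\phi_\omega\sP)\ge0$.
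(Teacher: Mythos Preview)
Your high-level ingredients are the same as the paper's---multivalued one-forms coming from archimedean and non-archimedean non-abelian Hodge theory, a vanishing-cycle reduction, and positivity from the curvature of (mixed) period domains---but the way you organize the argument has several genuine gaps that the paper is specifically designed to avoid.

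\textbf{The induction on $\dim X$ does not run.} When a simple perverse sheaf has support $\overline Z$ with $Z\subsetneq X$, you propose to pass to $\overline Z$ and apply the inductive hypothesis. But $\overline Z$ is in general singular (and so is its normalization), so the hypothesis---stated for projective \emph{manifolds}---does not apply to it; nor is there a good theory of characteristic cycles on a singular ambient space to fall back on. The paper never leaves the fixed smooth ambient $X$: it replaces the statement $\chi(X,\sP)\ge 0$ by the equivalent family of inequalities $T^*_ZX\cdot T^*_XX\ge 0$ for all irreducible $Z\subset X$ (via the index theorem and effectivity of $CC(\sP)$), and then the ``induction'' is on $\dim Z$, not on $\dim X$. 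The reduction step shows that $T^*_ZX\cdot T^*_XX$ is a non-negative rational combination of $T^*_{Z_i}X\cdot T^*_XX$ for strictly smaller $Z_i\subset Z$, all still sitting inside the same smooth $T^*X$.

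\textbf{The functor $\phi_\omega$ on perverse sheaves is not available.} You glue Deligne's $\phi_f$ over local primitives $f$ of $\omega$, but $f$ is only defined up to an additive constant and $\phi_{f-t}$ genuinely depends on $t$; even the ``total'' $\sum_t\phi_{f-t}$ is not known to glue to an exact endofunctor of $\mathrm{Perv}(X)$ for a multivalued $\omega$. The paper explicitly poses the existence of such a lift as an open question and sidesteps it: its $\Phi_\eta$ is defined only on the free abelian group $L(X)$ of conic Lagrangian cycles, by a deformation-to-normal-cone construction in $T^*X\times\bP^1$, and the identity $d\,\Lambda\cdot T^*_XX=\Phi_\eta(\Lambda)\cdot T^*_XX$ is a pure intersection-theory statement, not a sheaf-theoretic one.

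\textbf{Your notion of ``multivalued one-form'' is too narrow.} A section of $\Omega^1_X\otimes\mathbb L$ for a rank-one local system $\mathbb L$ is a \emph{twisted} one-form; the objects the argument actually needs are effective cycles $\Gamma\subset T^*X$ finite over $X$ (the spectral varieties of Higgs fields of arbitrary rank, or the Katzarkov--Eyssidieux forms in the non-archimedean case). Their monodromy is the full symmetric group on the sheets, not a character, so they do not fit into your $\Omega^1_X\otimes\mathbb L$ framework, and the local primitives you want to feed into $\phi_f$ do not exist in any single-valued twisted sense.

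Finally, the ``positivity of local intersection numbers'' you invoke is exactly what has to be proved, and for $Z$ where the restricted form is trivial the paper does not get it from Morse indices at all: it shows instead that such $Z$ admit a $\C$-VHS (or $\bR$-VMHS) with discrete monodromy and a \emph{finite} period map $Z_{\mathrm{norm}}\to\Gamma\backslash\cD$, and then pushes the Euler-obstruction function forward and appeals to the horizontal non-positivity of the period domain. The delicate point you allude to in the mixed case is handled not by curvature of the mixed domain (which fails) but by a separate CC-effectivity lemma for pushforwards along maps that factor through a vector bundle with a lattice acting by translations.
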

	In \cite{AW}, the conjecture was proven when there exists a semisimple, almost faithful, cohomologically rigid representation $\pi_1(X)\to \GL(r, \C)$. Here we recall that a linear representation is called \emph{almost faithful} if the kernel of the representation is finite, and it is called \emph{cohomologically rigid}, if it has no nontrivial first order deformation. 
	
	In this paper, we prove \Cref{Conj:AW} within a much broader framework: we require only the existence of    an almost faithful linear representation $\pi_1(X)\to \GL(r, K)$ over a field $K$ of arbitrary characteristic, without assuming semisimplity or cohomological ridigity of the representation.   
	
	%Let us state our main result in the strongest form. 
	
	Given a normal projective variety $X$ and a field $K$, a representation $\rho: \pi_1(X)\to \GL(r, K)$ is called \emph{large} if, for any irreducible subvariety $Z\subset X$, the composition $\pi_1(Z_{\norm})\to \pi_1(X)\xrightarrow{\rho} \GL(r, K)$ has infinite image.   
	\begin{theorem}\label{thm_main0}
		Let $X$ be a projective manifold. If there exists a large representation $\rho: \pi_1(X)\to \GL(r, K)$ where $K$ is any field, then $\chi(X, \sP)\geq 0$
		for any perverse sheaves $\sP$ on $X$. In particular, $(-1)^{\dim X}\chi(X)\geq 0$. 
	\end{theorem}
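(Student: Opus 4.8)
The plan is to prove the stronger perverse-sheaf statement $\chi(X,\sP)\ge 0$ by induction on $\dim X$; carrying along \emph{all} perverse sheaves is essential, since the inductive step will naturally produce a perverse sheaf on a lower-dimensional variety even when one starts from the constant sheaf. The mechanism of descent is a vanishing cycle functor $\phi_\omega$ attached to a (possibly multivalued) closed holomorphic one-form $\omega$: locally $\omega=df$ and $\phi_\omega:=\phi_f$, globalized using closedness of $\omega$ together with the rank-one local system recording its periods. Two properties are needed. First, a suitable shift of $\phi_\omega$ is $t$-exact for the perverse $t$-structure, so that $\phi_\omega\sP$ is, up to that shift, a perverse sheaf supported on the zero locus $Z(\omega)$. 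Second, an index formula of the shape
\[
\chi(X,\sP)=\chi\bigl(Z(\omega),\phi_\omega\sP\bigr),
\]
so that $Z(\omega)=\varnothing$ forces $\chi(X,\sP)=0$, while in general the computation of $\chi(X,\sP)$ is transported to the strictly smaller projective variety $Z(\omega)$, on which $\rho$ still restricts to a large representation. Applying the inductive hypothesis to the pair $(Z(\omega),\phi_\omega\sP)$, with the base case $\dim=0$ trivial since a perverse sheaf on a point is a vector space in degree $0$, then yields $\chi(X,\sP)\ge 0$; taking $\sP=\Q_X[\dim X]$ gives $(-1)^{\dim X}\chi(X)\ge 0$.

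For this to close up one must produce, on $X$ and on every subvariety on which $\rho$ remains large, a supply of multivalued one-forms that is \emph{non-degenerate}, meaning a generic member has zero locus properly contained in the variety; this is also where the field $K$ is handled. Since $\pi_1(X)$ is finitely generated, $\rho$ is defined over a finitely generated subring of $K$, and via Tits-type structure theory one passes to the archimedean and non-archimedean completions of a finitely generated field of definition. On the archimedean side, Corlette--Simpson theory attaches to the reductive part of $\rho$ a $\bC$-variation of Hodge structure with period map $p\colon\wti X\to\mathcal D$, whose differential (the Higgs field) produces, through its spectral data, multivalued holomorphic one-forms; the non-reductive part is recovered by the corresponding \emph{mixed} period maps of admissible variations, whose associated graded Higgs fields give further one-forms. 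On the non-archimedean side, for each valuation with unbounded image, harmonic maps to Euclidean buildings in the style of Gromov--Schoen and Katzarkov--Zuo yield yet more multivalued one-forms (in positive characteristic this non-archimedean input is what is available throughout). Largeness of $\rho$ along a subvariety $Z$ is precisely what forces the combined differential of all these maps to be generically injective on $TZ$, hence forces some one-form in the supply to be nonzero at the generic point of $Z$, so that a generic member has proper zero locus.

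With the functor and the supply in hand the induction runs without any cancellation, because $\phi_\omega$ never leaves the perverse heart: from a perverse $\sP$ on $X$ of dimension $n$ one picks a non-degenerate multivalued one-form $\omega$, obtains $\chi(X,\sP)=\chi(Z(\omega),\phi_\omega\sP)$ with $\phi_\omega\sP$ perverse on the projective variety $Z(\omega)$ of dimension $<n$ carrying the large representation $\rho|_{Z(\omega)}$, and invokes the inductive hypothesis. Formally the induction is run over all projective varieties equipped with a large representation rather than just manifolds — resolving or normalizing the components of $Z(\omega)$ when convenient and using compatibility of perverse sheaves and Euler characteristics with proper maps — so that the smooth case asserted in the theorem sits at the top of the induction.

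The main obstacle is the construction and control of the non-degenerate supply of multivalued one-forms at this level of generality. Removing the semisimplicity and cohomological rigidity hypotheses of earlier work forces one to develop the mixed period maps and to fuse them with the non-archimedean harmonic maps to buildings, and then to prove that largeness of $\rho$ genuinely translates into genericity of the zero loci — equivalently, that the Shafarevich-type morphism attached to $\rho$ has image of full dimension along every subvariety. A second, more technical, obstacle is to define the vanishing cycle functor $\phi_\omega$ for \emph{multivalued} one-forms and to establish its perverse $t$-exactness and the index formula $\chi(X,\sP)=\chi(Z(\omega),\phi_\omega\sP)$, in a form compatible with the resolutions used in the dévissage.
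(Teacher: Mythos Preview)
Your inductive scheme has a genuine gap at the archimedean step. You assert that the Higgs field of the $\C$-VHS attached to the reductive part of $\rho$ ``produces, through its spectral data, multivalued holomorphic one-forms,'' and that largeness then forces some one-form in the supply to be generically nonzero on every subvariety. But for a variation of Hodge structure the Higgs field is nilpotent---it strictly shifts the Hodge filtration---so its spectral cover is the zero section with multiplicity, and the same holds for the associated graded of a VMHS. The archimedean side therefore contributes no nontrivial multivalued one-form at all. What largeness actually buys you, once every non-archimedean place has bounded image, is that the (mixed) \emph{period map} has discrete fibers on $Z_{\norm}$; an immersion into a period domain is not a one-form on $Z$, and your induction stalls at positive dimension. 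Concretely, take $\rho$ to be a large cohomologically rigid $\C$-VHS with integral monodromy: no non-archimedean place is unbounded, no spectral one-form is nonzero, and your descent never begins.

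This is exactly where the paper's argument diverges from yours. The paper does not attempt to reduce to points purely via one-forms. It defines $\Phi_\eta$ only as an endomorphism of the group $L(X)$ of conic Lagrangian cycles, proves the index identity $d\,\Lambda\cdot T^*_XX=\Phi_\eta(\Lambda)\cdot T^*_XX$ there, and iterates until the remaining irreducible $Z$ are those on which the restricted representation underlies a $\C$-VHS (resp.\ $\R$-VMHS) with discrete monodromy and \emph{no further one-form is available}. At that terminal stage the inequality $T^*_ZX\cdot T^*_XX\ge 0$ is deduced from the curvature of the (mixed) period domain via Propositions~\ref{prop_geq0} and~\ref{prop_linear}, not from another vanishing-cycle step; this terminal ingredient is entirely absent from your plan. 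As a secondary point, lifting $\Phi_\eta$ to a $t$-exact functor on $D^b_c(X)$, which your scheme presupposes, is posed in the paper as an open question; the paper sidesteps it by passing immediately to characteristic cycles and never leaving $L(X)$.
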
	
	%	Let us mention that \cref{thm_main0} resolves a conjecture by Koll\'ar (cf. \cite[Conjecture 18.12.1]{Kol95}) in the linear case.
	
	Notice that if $X$ has large fundamental group and $\rho$ is almost faithful, then $\rho$ is large. Therefore, the following corollary follows immediately. 
	%\begin{enumerate}
	%\item  There is an  representation $\rho: \pi_1(X)\to \GL(r, \C)$, such that for any positive-dimensional irreducible subvariety $Z\subset X$, the composition $\pi_1(Z)\to \pi_1(X)\to \GL(r, \C)$ has infinite image. 
	%\item There is a linear representation $\rho: \pi_1(X)\to \GL(r, K)$ where $K$ is a field of positive characteristic, such that for any positive-dimensional irreducible subvariety $Z\subset X$, the composition $\pi_1(Z)\to \pi_1(X)\to \GL(r, K)$ has infinite image. 
	%\end{enumerate}
	%Then for any perverse sheaf $\sP$ on $X$, $\chi(X, \sP)\geq 0$. 
	%	\begin{theorem}\label{thm_main'}
		%		\footnote{added. We may incorporate it into Theorem \ref{thm_main0}}Let $X$ be a smooth projective variety. Assume that Then for any perverse sheaf $\sP$ on $X$, $\chi(X, \sP)\geq 0$. 
		%	\end{theorem}
	%If a representation $\rho: \pi_1(X)\to \GL(r, \C)$ satisfies the assumption of the theorem, we say that the associated local system $L_\rho$ is a large local system on $X$. 
	\begin{corollary}
		Let $X$ be a projective manifold with large fundamental group (e.g. $X$ is aspherical). If there exists an almost faithful representation $\rho: \pi_1(X)\to \GL(r, K)$, then for any perverse sheaf $\sP$ on $X$, $\chi(X, \sP)\geq 0$. In particular, Chern-Hopf-Thurston conjecture holds for $X$. %, that is, $(-1)^{\dim X}\chi(X)\geq 0$. Chern-Hopf-Thurston
	\end{corollary}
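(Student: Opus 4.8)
The plan is to argue by induction on $\dim X$, the case $\dim X = 0$ being trivial. By additivity of the Euler characteristic on short exact sequences of perverse sheaves and the decomposition into simple constituents, I would first reduce to $\sP = \mathrm{IC}_{\overline Z}(\mathbb L)$ for $Z\subset X$ a smooth locally closed subvariety and $\mathbb L$ an irreducible local system. The opening move is a reduction to a Hodge-theoretic situation. Since $\pi_1(X)$ is finitely presented, $\rho$ is conjugate to a representation into $\GL(r,A)$ for a finitely generated $\Z$-subalgebra $A\subseteq K$; spreading out and specialising produces from $\rho$ a family of representations of $\pi_1(X)$ into $\GL(r,\C)$ (at the archimedean places, available whenever $\mathrm{char}\,K = 0$) and into non-archimedean local fields (at the finite places, and these are the only ones available when $\mathrm{char}\,K = p$), and for a suitable choice among them largeness is preserved. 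Thus I may assume I am handed a large representation into $\GL(r,\C)$ and/or into $\GL(r,\overline{\Q_p})$.

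Next, I would feed such representations---together with the Tannakian subcategory they generate---into non-abelian Hodge theory. At an archimedean place, the Corlette--Simpson correspondence and Eyssidieux's construction of the reductive Shafarevich morphism yield (possibly mixed, to accommodate the non-semisimple part and the weight filtration) period maps $X\dashrightarrow \sD/\Gamma$ whose Stein factorisation is generically finite because $\rho$ is large, and concretely a supply of Higgs bundles $(E,\theta)$ with $\theta\colon E\to E\otimes\Omega^1_X$. At a non-archimedean place I would invoke Gromov--Schoen together with the Katzarkov--Zuo/Eyssidieux spectral covering construction: an unbounded reductive representation gives a non-constant pluriharmonic map to a Bruhat--Tits building and again a family of multivalued holomorphic one-forms, the eigenvalue forms of the associated Higgs fields. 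The output I want to extract from largeness of $\rho$ is an \emph{abundance} statement: the multivalued holomorphic one-forms so produced separate directions on $X$, so that no positive-dimensional subvariety of $X$ is annihilated by all of them; in particular there is a nonzero multivalued holomorphic one-form $\omega$ on $X$ (equivalently, a holomorphic one-form on a finite cover, or a one-form with coefficients in a rank-one local system).

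The technical heart, which I would develop next, is the vanishing cycle functor $\phi_\omega$ attached to a multivalued one-form. Regarding $\omega$ as the differential of a $\pi_1$-equivariant multivalued function, I would define $\phi_\omega$ on $D^b_c(X)$, show that it lands in complexes supported on the degeneration locus $Z_\omega$ (a closed subvariety, proper because $\omega\not\equiv 0$), that it carries perverse sheaves to perverse sheaves up to shift, and---this is the crux, the non-abelian counterpart of the characteristic-cycle computations of Franecki--Kapranov and Liu--Maxim--Wang---that for $\omega$ generic within the available family one has the Morse-theoretic identity
\[
\chi(X,\sP)=\chi\bigl(Z_\omega,\phi_\omega\sP\bigr),
\]
the locus $X\setminus Z_\omega$ contributing nothing by local acyclicity since $\omega$ does not degenerate there. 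Now $Z_\omega$ carries a large representation (largeness is hereditary for subvarieties), $\dim Z_\omega<\dim X$, and $\phi_\omega\sP$ is, up to shift, perverse on $Z_\omega$, so the inductive hypothesis gives $\chi(Z_\omega,\phi_\omega\sP)\ge 0$ and hence $\chi(X,\sP)\ge 0$; the base of the induction, where the relevant period map is generically finite onto a semiabelian variety or a curve, is handled directly by classical generic vanishing. (If $\omega$ can be chosen with $Z_\omega=\emptyset$, the identity already gives $\chi(X,\sP)=0$.)

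I expect the main obstacle to be the construction of $\phi_\omega$ and the genericity identity in the genuinely non-abelian regime: when $b_1(X)=0$ there are no single-valued holomorphic one-forms, so one must work with the multivalued one-forms furnished by Higgs fields and harmonic maps to buildings, control their degeneration loci, and prove that a generic such form captures the entire Euler characteristic---this is where the archimedean and non-archimedean inputs must be combined with a careful Morse theory of multivalued functions and with the positivity of period maps. Secondary difficulties: deducing the abundance of one-forms (in particular $\omega\not\equiv 0$ and $\dim Z_\omega<\dim X$) from largeness, which rests on the structure theory of the linear Shafarevich morphism; the bookkeeping imposed by non-semisimple representations, handled through mixed period maps and weight filtrations; and the positive-characteristic case, which has to be routed entirely through the non-archimedean side.
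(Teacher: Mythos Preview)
Your proposal identifies the right ingredients—multivalued one-forms from non-abelian Hodge theory (both archimedean and non-archimedean), a vanishing cycle construction attached to such forms, and an iterative reduction—but the technical implementation diverges from the paper's in a way that creates a genuine gap.

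The central issue is that you propose to define $\phi_\omega$ as a functor on $D^b_c(X)$ taking perverse sheaves to (shifted) perverse sheaves, together with the identity $\chi(X,\sP)=\chi(Z_\omega,\phi_\omega\sP)$. The paper does \emph{not} construct such a functor on constructible complexes; in fact it explicitly poses this as an open question (Question~3.8). What the paper builds instead is a vanishing cycle operator $\Phi_\eta$ acting on the free abelian group $L(X)$ of conic Lagrangian cycles in $T^*X$, via a deformation-to-the-normal-cone construction. The key identities (Propositions~3.3 and~3.5) are then
\[
d\,T^*_Z X\cdot T^*_X X=\Phi_\eta(T^*_Z X)\cdot T^*_X X,\qquad \Phi_\eta(T^*_Z X)=n_0\,T^*_Z X+\sum_i n_i\,T^*_{Z_i}X
\]
with $n_0<d$ when $\eta|_{Z_{\reg}}$ is nontrivial and $Z_i\subsetneq Z$. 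This reduces the intersection number $T^*_Z X\cdot T^*_X X$ to a nonnegative rational combination of the same quantities for strictly smaller subvarieties, all computed \emph{inside the fixed ambient} $X$. Working at the cycle level sidesteps entirely the question of whether a sheaf-level functor exists with the required $t$-exactness.

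A second gap is your induction on $\dim X$. You want to pass to the degeneration locus $Z_\omega$ and invoke the inductive hypothesis there, but $Z_\omega$ will typically be singular, so the hypothesis (stated for projective \emph{manifolds}) does not apply. The paper never leaves $X$: the iteration is on the subvariety $Z$ appearing in $T^*_Z X$, not on the ambient space, and the intersection numbers $T^*_{Z_i}X\cdot T^*_X X$ make sense regardless of the singularities of $Z_i$.

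Finally, your description of the base case is off. In characteristic $p$ the iteration terminates only at points, where $T^*_{\{pt\}}X\cdot T^*_X X=1$. Over $\C$ the iteration terminates when $\eta|_{Z_{\reg}}$ becomes trivial; at that stage one shows (Propositions~4.9, 5.2) that the pullback to $Z_{\rm norm}$ of a suitable $\C$-VHS (or $\R$-VMHS in the non-semisimple case) has discrete monodromy and a period map with discrete fibers, and the inequality $T^*_Z X\cdot T^*_X X\geq 0$ is deduced from the nonpositive holomorphic bisectional curvature of the period domain in horizontal directions (Proposition~4.7, from \cite{AW}), not from generic vanishing. Handling singular $Z$ here requires a further device: a constructible function $\delta$ on $Z_{\rm norm}$ (Definition~4.3) whose pushforward under the period map is shown to be CC-effective via Proposition~5.1 in the mixed case.
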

	
	A slightly stronger formulation of Theorem \ref{thm_main0} is given below. %, it suffices to show that 
	\begin{theorem}\label{thm_main}
		Let $X$ be a projective manifold and let  $\rho:\pi_1(X)\to \GL(r, K)$ be  a large representation, where $K$ is any field. Then, for any closed irreducible subvariety $Z\subset X$, 
		\begin{equation}\label{eq_ineq}
			T^*_Z X\cdot T^*_X X\geq 0
		\end{equation}
		where $T_Z^*X$ denotes the conormal variety of $Z$ and $\cdot$ denotes the intersection number in $T^*X$. 
	\end{theorem}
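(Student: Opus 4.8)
The plan is to reduce the statement to a positivity property of characteristic cycles and then to establish the latter using the nonabelian Hodge-theoretic machinery and the vanishing cycle functor of multivalued one-forms announced in the introduction. First I would recall that by the index theorem of Kashiwara and Dubson, for any perverse sheaf $\sP$ on $X$ one has $\chi(X,\sP) = \mathrm{CC}(\sP)\cdot T^*_X X$, where $\mathrm{CC}(\sP) = \sum_\alpha m_\alpha(\sP)\, [T^*_{Z_\alpha} X]$ is the characteristic cycle, a conical Lagrangian cycle in $T^*X$ with \emph{nonnegative} multiplicities $m_\alpha(\sP)\geq 0$ (since $\sP$ is perverse). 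Thus \Cref{thm_main0} follows from \Cref{thm_main}: the inequality $T^*_Z X\cdot T^*_X X\geq 0$ for every irreducible $Z$, applied termwise, gives $\chi(X,\sP)\geq 0$. So the real content is \eqref{eq_ineq}, and I would prove it by a dévissage on $\dim Z$: since $\pi_1(Z_{\norm})\to\GL(r,K)$ has infinite image by largeness of $\rho$, one wants to produce, from this infinite linear image, a geometric object on $Z$ (equivalently on a resolution $\tilde Z\to Z$) witnessing the intersection-number positivity.

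The key steps, in order, would be: (1) Pass to a resolution $\mu:\tilde Z\to Z$ and reduce the conormal intersection number on $X$ to an Euler characteristic computation on $\tilde Z$ — concretely, $T^*_Z X\cdot T^*_X X = (-1)^{\dim Z}\chi(\tilde Z,\text{[something built from }\mu\text{]})$ up to contributions from the exceptional locus, which are handled inductively. (2) Since $\rho|_{\pi_1(\tilde Z)}$ has infinite image, use the structure theory of linear groups (Tits alternative / the fact that an infinite finitely generated linear group is either virtually solvable with unbounded image or contains a free subgroup) to split into two regimes. In the non-archimedean regime, the representation, after spreading out, gives an unbounded image in $\GL(r,\mathbb{F})$ over a non-archimedean local field, and Gromov–Schoen / Bruhat–Tits building techniques produce a nonconstant harmonic map to the building, hence a nonzero holomorphic one-form or a nontrivial fibration structure; in the archimedean regime one uses the existence of a nontrivial period map (pure or mixed Hodge structure) from the corresponding variation coming from $\rho$, i.e., the classical Corlette–Simpson correspondence. (3) In either case one extracts a ``generic one-form'': a multivalued holomorphic one-form $\omega$ on $\tilde Z$ (a closed holomorphic one-form with values in a local system, or a class in $H^0$ of a pullback of the cotangent bundle of the period domain/building) whose zero locus is proper. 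Then apply the vanishing cycle functor for multivalued one-forms to compute $\chi(\tilde Z, \mathbb{Q})$ (and the relevant twisted versions) as a sum of local Milnor-number contributions at the zeros of $\omega$, each of a fixed sign $(-1)^{\dim Z}$, which yields the desired inequality.

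I would organize the induction so that largeness is used at every stratum: by largeness, for each irreducible $Z$ the restriction of $\rho$ to (the normalization of) $Z$ still has infinite image, so the one-form construction applies uniformly, and the exceptional/boundary contributions arising from the resolution and from the stratification are again conormal intersection numbers of lower-dimensional subvarieties, each nonnegative by the inductive hypothesis. Combining the local positivity at the zeros of the multivalued one-form with the inductively-controlled boundary terms gives \eqref{eq_ineq}.

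The main obstacle I anticipate is Step (3): guaranteeing that the one-form produced from the (possibly non-rigid, possibly positive-characteristic, possibly non-semisimple) representation actually has an \emph{isolated} or at least \emph{proper} zero locus on $\tilde Z$ — equivalently, that the associated (mixed) period map or harmonic map is ``generically immersive'' in enough directions. When $\rho$ is not rigid or not semisimple, the classical period map may have large fibers, and one must instead work with the full family of such maps (deformations of the Higgs bundle, or the whole collection of non-archimedean valuations / the shafarevich-type reduction map) and argue that their \emph{combined} differentials are generically injective thanks to largeness. Making this ``generic injectivity from largeness'' precise — and reconciling the archimedean and non-archimedean constructions into a single vanishing-cycle computation with a common sign — is where the bulk of the technical work will lie; the reduction in Steps (1)–(2) and the index-theorem deduction of \Cref{thm_main0} from \Cref{thm_main} are comparatively formal.
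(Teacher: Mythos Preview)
Your overall architecture is close to the paper's, but there is a genuine conceptual gap in Step~(3) and in how you plan to terminate the induction. You frame the endgame as: produce a multivalued one-form $\omega$ on (a resolution of) $Z$ with \emph{isolated} (or at least proper) zero locus, then read off positivity from local Milnor contributions. The paper does \emph{not} do this, and in fact there is no reason such an $\omega$ exists. What the vanishing cycle functor $\Phi_\eta$ actually buys is a \emph{reduction}, not a one-shot positivity: if $\eta$ is a $d$-valued closed one-form on $X$ whose restriction to $Z_{\reg}$ is merely \emph{non-trivial} (no control on the zero locus), then
\[
T^*_Z X\cdot T^*_X X \;=\; \sum_i \lambda_i\, T^*_{Z_i}X\cdot T^*_X X,\qquad \lambda_i\in\Q_{\geq 0},\ Z_i\subsetneq Z.
\]
So the induction moves not by passing to a resolution of $Z$ and analyzing zeros there, but by staying in $L(X)$ and replacing $T^*_Z X$ with lower-dimensional conormals. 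The crucial point you are missing is what happens when this reduction \emph{stops}, i.e.\ when $\eta|_{Z_{\reg}}$ is trivial. In the paper this termination is governed by a dichotomy coming from the Katzarkov--Eyssidieux reduction: one constructs simultaneously a multivalued one-form $\eta$ (from all non-archimedean, unbounded representations) \emph{and} a representation $\sigma$ underlying a $\C$-VHS (or an $\R$-VMHS in the non-semisimple case), such that for every $Z$ with $\eta|_{Z_{\reg}}$ trivial, the pullback of $\sigma$ to $Z_{\norm}$ is large with \emph{discrete} monodromy. At that point positivity is not a Milnor-number count at all; it comes from the curvature of the period domain, via $(-1)^{\dim Z}\chi(Eu_Z)\geq 0$ for horizontal subvarieties of $\Gamma\backslash \cD$.

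Two more specific corrections. First, in Step~(1) you should not pass to a resolution $\widetilde Z\to Z$: the identity you need is $T^*_Z X\cdot T^*_X X=(-1)^{\dim Z}\chi(Eu_Z)$, and $Eu_Z$ does not become $\bone_{\widetilde Z}$ under resolution. The paper works with the normalization $Z_{\norm}$ and a carefully defined constructible function $\delta$ on it with $p_*\delta=(-1)^{\dim Z}Eu_Z$; showing that its pushforward along the period map stays CC-effective requires the Aluffi--Mihalcea--Sch\"urmann--Su positivity for finite pushforwards, not an elementary resolution argument. Second, your hoped-for ``generic injectivity of the combined differentials'' is the right intuition, but it is realized as the statement that when $\eta|_Z$ is trivial the (mixed) period map on $\widetilde{Z}_{\norm}^{\univ}$ has discrete fibers and discrete monodromy; this is where the heavy input from the linear Shafarevich machinery enters, and it is a dichotomy with the $\eta$-reduction, not a single immersive map.
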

	\begin{proof}[Proof of \cref{thm_main0} assuming \cref{thm_main}]
		We need to use the notions of characteristic cycle and conormal variety which will be reviewed in \cref{sec_constr}.
		
		Since the characteristic cycle of a perverse sheaf is always effective (cf. \cref{prop_eff}), we have	\[
		CC(\sP)=\sum_{1\leq i\leq m} n_i T_{Z_i}^* X,
		\]
		where $Z_i$ are irreducible subvarieties of $X$ and $n_i\in \Z_{>0}$.  By the global index theorem (cf. \cref{thm_index}) and \cref{thm_main}, we have 
		\[
		\chi(X, \cP)=CC(\cP)\cdot T^*_X X=\sum_{1\leq i\leq m} n_i T_{Z_i}^* X\cdot T_X^*\geq 0.  \qedhere
		\] 
	\end{proof}
	Notice that since $\pi_1(X)$ is finitely generated, having a large representation $\pi_1(X)\to \GL(r, K)$ for a field $K$ of characteristic zero is equivalent to having a large representation $\pi_1(X)\to \GL(r, \C)$. Therefore, in \cref{thm_main}, we can assume either $\mathrm{char}(K)>0$ or $K=\C$. 
	
	%\begin{remark}
	%We consider the conclusion of Theorem \ref{thm_main} as certain topological positivity of $T^*X$, which depends on its symplective structure. Indeed, by \cite{DPS}, we know that  \eqref{eq_ineq} holds for any projective manifolds with nef cotangent bundle.  %We consider the conclusion of Theorem \ref{thm_main} as certain topological positivity of $T^*X$, which depends on its symplective structure. One may hope to further strengthen the conclusion to $T^*X$ being nef.\footnote{By \cite{DPS}, if $T^*X$ is nef, then the intersection number of $T^*_X X$ and any conic subvariety of $T^*X$ with complementary dimension is non-negative.} 
	%However, there are examples whose fundamental groups admit large linear representation, while its cotangent bundle is not nef. 
	%In \cite[Corollary 2]{Yiyu}, Yiyu Wang constructed some ramified covers of abelian varieties which do not have nef cotangent bundles, while such ramified covers a general rank one complex representation of the fundamental group is large.
	%\end{remark}
	
	\subsection{Sketch of the proof} 
	We will use techniques from the linear Shafarevich conjecture presented in \cite{DY23,DY24,Eys04,EKPR12} to establish Theorem \ref{thm_main}. As in \cite{AW},   Arapura and the second-named author proved  \cref{thm_main} assuming $\rho$  underlies a complex variation  of Hodge structure  ($\C$-VHS) with discrete monodromy. When $\rho$ does not satisfy this property,   Yamanoi and the first-named author demonstrated in \cite{DY23}  that that nontrivial multivalued one-forms can be produced on $X$. Our approach iteratively utilizes both multivalued one-forms and the period maps   of $\bC$-VHS to deduce the inequality \eqref{eq_ineq}. 
	
	Let us briefly  explain our strategy. 	First, we observe that if there is a $d$-valued one-form whose image in $T^*X$ intersects $T^*_Z X$ at finitely many points, then the inequality \eqref{eq_ineq} holds, since $d$ times the left-hand side is equal to the number of intersection points counting multiplicity.  
	
	In general, we introduce a vanishing cycle functor $\Phi_\eta$ of a $d$-valued one-form $\eta$ acting on the free abelian group generated by conormal varieties. We show that $d\, T^*_Z X\cdot T^*_X X=\Phi_\eta(T^*_Z X)\cdot T^*_X X$. Using $\Phi_\eta$, we can reduce inequality \eqref{eq_ineq} to the same inequality with $Z$ replaced by smaller subvarieties, provided the restriction of $\eta$ to the smooth locus of $Z$ is non-trivial.  The proof of \cref{thm_main} is divided into the following three cases. 
	
	\textbf{Case (i):} when $\mathrm{char}(K)>0$. In this case, we can prove that the above reduction process continues until $Z$ becomes a set of points. When $Z$ is a point, the inequality \eqref{eq_ineq} is obvious. 
	
	\textbf{Case (ii):} when $K=\C$ and $\rho$ is semisimple. In this case, when the above reduction process terminates, $Z_{\norm}$ underlies a   $\C$-VHS with large monodromy representation and discrete monodromy group. As proved in \cite{AW},   $Z_{\norm}$ then admits a finite morphism to the period domain of such $\bC$-VHS. The inequality \eqref{eq_ineq} then can be deduced from the curvature properties of the period domain. 
	
	\textbf{Case (iii):} when $K=\C$ and $\rho$ is not semisimple. In this case, when the reduction process terminates, $Z_{\norm}$ admits a real variation of mixed Hodge structure ($\bR$-VMHS), where its mixed period map $Z_{\rm norm}^{\rm univ}\to \cM$  has discrete fibers. In this case,  two additional difficulties arise compared to Case (ii):
	\begin{itemize}
		\item  The monodromy group may not act on the mixed period domain discretely, and thus we cannot take the quotient of the period map by the monodromy group. 
		\item  The mixed period domain does not have the desired non-positive curvature as in the pure case. %hence the computation in \cite{AW} for the pure case does not work here. 
	\end{itemize}
	These difficulties are resolved by a technical result (see Proposition \ref{prop_E}) which enables us to simultaneously explore the geometry of the mixed period map of the $\bR$-VMHS and the period map of the $\C$-VHS corresponding to  the semi-simplification of the $\bR$-VMHS. 
	
	In \cite{AW}, the discreteness of the monodromy group is obtained from a deep theorem of Esnault-Groechenig (\cite{EG18}, see also \cite{Esnault} for a survey). % based on both arithmetic and geometric Langlands programs. 
	In this paper, the discreteness of monodromy groups is deduced from nonarchemidean Hodge theory, which is same as the approach in \cite{BKT}.
	
	In theory, we could bypass Case (ii) and directly prove the more general Case (iii). However, we chose to present the proof in separate cases to isolate the technical arguments and to accommodate readers who are satisfied with understanding the proof up to Case (ii).  
	
	\subsection{Relation with the Shafarevich conjecture and hyperbolicity}
	The Shafarevich conjecture predicts that the universal cover of a complex  normal projective variety $X$ is holomorphically convex. In particular, if $X$ has large fundamental group, then its universal cover is conjectured to be Stein. Currently, this conjecture is proved for the following cases,
	\begin{enumerate}
		\item when $X$ is smooth and $\pi_1(X)$ admits a faithful linear representation over $\C$, by Eyssidieux et. al. in \cite{EKPR12};
		\item when $X$ is not necessarily smooth and $\pi_1(X)$ admits a faithful reductive linear representation over $\C$, by the first-named author, Yamanoi and Katzarkov in \cite{DY23};
		\item when $X$ is a normal surface and $\pi_1(X)$ admits an almost faithful  linear representation over a field of positive characteristic in \cite{DY24}. 
	\end{enumerate}
	
	The proofs of the Shafarevich conjecture utilize non-abelian Hodge theories both in the archimedean setting by Simpson \cite{Simpson} and in the non-archimedean setting by Gromov-Schoen \cite{GS92}.  
	
	In this paper, we do not use any results about holomorphic convexity or Steinness. However, we do employ both archimedean and non-archimedean non-abelian Hodge theories, as used in the proofs of the Shafarevich conjectures. These techiniques are robust tools for studying the Shafarevich conjecture, the hyperbolicity of algebraic varieties (cf.  e.g. \cite{Yam10,CDY22,DY24}), and the Chern-Hopf-Thurston conjecture in the linear case. Our main result \cref{thm_main} involves a specific positivity property of the cotangent bundle of $X$, which is known to be related to the hyperbolicity of $X$. For example, a projective manifold with ample cotangent bundle is Kobayashi hyperbolic. %Similarly, having a Stein universal cover is also related to hyperbolicity. For example, if the Stein universal cover of $X$ is a bounded domain, then $X$ is automatically Kobayashi hyperbolic. 
	
	Although our result relies on $X$ to have large fundamental group, we hope that positivity \eqref{eq_ineq} can be directly related to hyperbolicity. For example, does inequality \eqref{eq_ineq} always hold for irreducible subvarieties $Z$ in a Kobayashi hyperbolic projective manifold $X$? %In fact, we do not know any example of Kobayashi hyperbolic projective manifold $X$, such that \eqref{eq_ineq} fails to be strictly inequality. 
	In general, it is also interesting to find sufficient conditions on a representation of $\pi_1(X)$ such that inequality \eqref{eq_ineq} is always strict inequality.  In \cite{CDY22,DY24}, it was proved that when the Zariski closure of a (generically) large representation $\varrho:\pi_1(X)\to \GL(r,K)$ is a semisimple algebraic group, then $X$ is ``almost'' hyperbolic. This condition is sharp, as the case of abelian varieties must be excluded. Hence we conjecture that  inequality \eqref{eq_ineq} is strict if the Zariski closure of $\varrho({\rm Im}[\pi_1(Z_{\rm norm})\to \pi_1(X)])$ is a  semisimple algebraic group for every positive-dimensional subvariety $Z$ of $X$. %This supports our previous conjecture since such $X$ is Kobayashi hyperbolic by \cite{CDY22,DY24}.
	In this case, $X$ is Kobayashi hyperbolic (see \cite{Yam10,CDY22,DY24}).

	Recently, various generalizations of the Chern-Hopf-Thurston and the Singer conjectures for algebraic varieties have been formulated, e.g., in the singular setting \cite{Maxim}, and in the coherent setting \cite{AMW}. Here, we propose Chern-Hopf-Thurston type conjectures for quasi-projective varieties. For example, we conjecture that if the universal cover of a quasi-projective manifold $X$ is a bounded symmetric space, then $\chi(X, \sP)\geq 0$ for any perverse sheaf on $X$ (with respect to an algebraic stratification). A special case of this conjecture is proved in the recent work \cite{AP}. 

	\subsection*{Acknowledgement}
	This paper builds upon previous collaborations of the second author with Yongqiang Liu, Laurentiu Maxim, and Donu Arapura, as well as the collaboration of the first author with Katsutoshi Yamanoi.  We are grateful for the numerous enlightening conversations that have played a crucial role in the development of this work. We thank J\"org Sch\"urmann for carefully reviewing an earlier draft of this paper and providing valuable feedback. We also thank H\'el\`ene Esnault for patiently answering our questions. Additionally, we have benefited greatly from discussions with Jie Liu, Qizheng Yin, and Xiping Zhang.
The second author thanks Peking University and BICMR for the generous hospitality and support during the writing of this paper. The first author is partially supported by the
French Agence Nationale de la Recherche (ANR) under reference ANR-21-CE40-0010 (KARMAPOLIS).	 The second author is partially supported by a Simons fellowship. %, whose insights and perspectives have enriched our understanding of the subject matter.
	\subsection*{Notation and Convention.} 
	\begin{itemize}
		\item For any analytic/algebraic variety $Z$, we denote by $Z_{\rm norm}$ its normalization, by $Z_{\reg}$ the smooth locus of $Z$, and by $\widetilde{Z}^{\univ}$ its universal cover. 
		\item For any algebraic variety $X$, any field $K$ and any representation $\sigma:\pi_1(X)\to \GL(r,K)$, $L_\sigma$ denotes the corresponding local system. 
		\item Let $\sigma:\Gamma\to \GL(r,\bC)$ where $\Gamma$ is a finitely generated group. We denote by $\sigma^{ss}:\Gamma\to \GL(r,\bC)$   its semisimplification. 
	\end{itemize}
	
	\section{Preliminaries}
	We begin by reviewing some results about constructible functions, constructible sheaves, characteristic cycles and the global index formula. The recent survey article \cite{MS} gives a more detailed summery of these topics. 
	\subsection{Constructible functions}	
	Let $X$ be an analytic variety. A function $\gamma: X\to \Z$ is called constructible, if there exists a locally finite stratification of $X$ into locally closed smooth subvarieties $X=\bigsqcup_{i\in I}X_i$, such that the restriction of $\gamma$ to each $X_i$ is constant. 
	The Euler characteristic 
	\[
	\chi(\gamma)=\sum_{i\in I} \gamma(X_i) \cdot \chi(X_i)
	\]
	is well-defined when the above stratification can be chosen to be finite and each stratum is homotopy equivalent to a finite CW-complex. For example, this holds when $X$ is compact or when $X$ is an algebraic variety and each stratum is a locally closed algebraic subvariety. 
	
	If $f: Y\to X$ is a holomorphic map between analytic varieties, then the pullback $f^*(\gamma)\coloneqq \gamma\circ f$ is a constructible function on $Y$. 
	
	If $g: X\to Y$ is a proper holomorphic map between complex analytic varieties, then the pushforward of a constructible function $g_*(\gamma)$ defined by 
	\[
	g_*(\gamma)(y)=\chi\left(g^{-1}(Y), \gamma|_{g^{-1}(y)}\right)
	\]
	is a constructible function. Moreover, we have $\chi(Y, g_*(\gamma))=\chi(X, \gamma)$ when $Y$ is compact or when $g$ is a regular map of algebraic varieties and the constructibility is algebraic. 
	
	Given a constructible complex $\sF$ defined over a field, we can define its stalkwise Euler characteristic function $\chi_{st}(\sF)$ by 
	\[
	\chi_{st}(\sF)(x)=\chi(\sF_x).
	\]
	
	%	Given a construction function $\gamma$ in $X$, we define the \emph{support} of $\gamma$, denoted by $\Supp(\gamma)$, to be the closure of $\{x\in X\mid \gamma(x)\neq 0\}$. Then $\Supp(\gamma)$ is always a closed analytic subset of $X$. 
	
	\subsection{Constructible sheaves and characteristic cycles} \label{sec_constr}
	Let $X$ be a complex manifold. Let $D_c^b(X, K)$ be the derived category of $K$-constructible complexes,%\footnote{If $X$ is an algebraic variety, then we use algebraic constructibility. Otherwise, we use analytic constructibility.} 
	and let $Perv(X)$ be its subcategory of perverse sheaves. We are interested in the Euler characteristics of perverse sheaves, e.g., $\Q_X[\dim X]$. The Euler characteristic of a constructible complex can be computed via characteristic cycles. 
	
	%Recall that the cotangent bundle $T^*X$ has a natural holomorphic symplectic structure. 
	Given any irreducible analytic subvariety $Z$ of $X$, its conormal variety $T^*_Z X$ is defined to be the closure of the conormal bundle $T^*_{Z_{\reg}}X$ in $T^*X$. In particular, the conormal variety $T^*_X X$ of $X$ itself is equal to the zero section of $T^*X$. Every conormal variety $T^*_Z X$ is conic and Lagrangian in $T^*X$, and conversely, every conic Lagrangian subvariety of $T^*X$ is a conormal variety. Denote by $L(X)$ the abelian group of locally finite conic Lagrangian cycles on $T^*X$. The characteristic cycle is a group homomorphism
	\[
	CC: K_0\big(D_c^b(X, K)\big)\to L(X),
	\]
	where $K_0(D_c^b(X, K))$ is the Grothendieck group of $D_c^b(X, K)$. 
	We will not go into its precise definition here (see e.g., \cite[Definition 4.3.19]{Di} and \cite[Chapter IX]{KS}). Instead, we will focus on the following two important properties. 
	\begin{theorem}[\cite{Kas}, {\cite[Theorem 4.3.25]{Di}}] \label{thm_index}
		Let $X$ be a complex manifold, and let $\sF$ be a constructible complex on $X$ with compact support. Then
		\[
		\chi(X, \sF)=CC(\sF)\cdot T^*_X X
		\]
		where the right-hand side denotes the intersection number in $T^*X$. 
	\end{theorem}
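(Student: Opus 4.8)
The plan is to notice that both sides of the identity are additive and to reduce to a smooth base case. The Euler characteristic of a constructible complex with compact support is additive in distinguished triangles, so $\sF\mapsto\chi(X,\sF)$ descends to a homomorphism $K_0(D_c^b(X,K))\to\Z$; since $CC$ is by construction a homomorphism on $K_0(D_c^b(X,K))$ and $\Lambda\mapsto\Lambda\cdot T^*_XX$ is $\Z$-linear on $L(X)$, the right-hand side descends to a homomorphism $K_0(D_c^b(X,K))\to\Z$ as well. It therefore suffices to verify $\chi(X,\sF)=CC(\sF)\cdot T^*_XX$ on a generating set of $K_0(D_c^b(X,K))$.

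I would use the generators $[Rf_*\,\mathcal L[\dim Y]]$, where $f\colon Y\to X$ is a proper holomorphic map from a compact complex manifold $Y$ and $\mathcal L$ is a $K$-local system on $Y$. That these classes generate follows by d\'evissage on the compact analytic set $\Supp\sF$: stratify it, resolve (Hironaka) the closure of a top-dimensional stratum to produce such an $f$, use that $\sF$ restricted to the open stratum is, in $K_0$, a sum of shifted local systems, and induct on the dimension of the support to peel off one stratum at a time. So it remains to treat $\sF=Rf_*\,\mathcal L[\dim Y]$.

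For such $\sF$ I would push the computation down to $Y$. Since $f$ is proper, $\chi(X,Rf_*\,\mathcal L[\dim Y])=\chi(Y,\mathcal L[\dim Y])=(-1)^{\dim Y}\chi(Y,\mathcal L)=(-1)^{\dim Y}\rank(\mathcal L)\,\chi(Y)$, the last equality because Euler characteristics with local coefficients do not see the monodromy. The base case of the theorem---namely $X=Y$ smooth and $\sF=\mathcal L[\dim Y]$, which is perverse with $CC(\mathcal L[\dim Y])=\rank(\mathcal L)\,[T^*_YY]$---then amounts to the Gauss--Bonnet--Chern theorem: the self-intersection of the zero section of $T^*Y$ is $\int_Y c_{\dim Y}(T^*Y)=(-1)^{\dim Y}\int_Y c_{\dim Y}(TY)=(-1)^{\dim Y}\chi(Y)$, so $CC(\mathcal L[\dim Y])\cdot T^*_YY=(-1)^{\dim Y}\rank(\mathcal L)\,\chi(Y)$, matching the left-hand side. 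Finally, the functoriality of characteristic cycles under proper direct image, $CC(Rf_*\mathcal G)=f_*CC(\mathcal G)$ for the proper pushforward $f_*\colon L(Y)\to L(X)$, together with its compatibility with the index pairing, $(f_*\Lambda)\cdot T^*_XX=\Lambda\cdot T^*_YY$, gives $CC(Rf_*\mathcal L[\dim Y])\cdot T^*_XX=CC(\mathcal L[\dim Y])\cdot T^*_YY=(-1)^{\dim Y}\rank(\mathcal L)\,\chi(Y)=\chi(X,\sF)$, as wanted.

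The main obstacle is precisely this functoriality input $CC(Rf_*\mathcal G)=f_*CC(\mathcal G)$ for proper $f$: it is essentially as deep as the index theorem itself and is a central result of the microlocal theory of sheaves (Kashiwara, Dubson). If one prefers not to import it, the cleanest alternative is to skip the d\'evissage and argue directly by stratified Morse theory: fix a Whitney stratification adapted to $\sF$, choose a generic real smooth function $\phi$ so that the Lagrangian $\Gamma_{d\phi}$ meets $SS(\sF)$ transversally in finitely many points, each a nondegenerate critical point of the restriction of $\phi$ to its stratum, and compute $\chi(X,\sF)$ as the sum over these points of the Euler characteristics of the local Morse data of $\sF$. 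By the construction of the intersection of a conic Lagrangian cycle with the zero section, $CC(\sF)\cdot T^*_XX$ is computed by the same points, each contributing the multiplicity of $CC(\sF)$ on the conormal sheet through it times a sign. The genuinely hard step in this route is the \emph{local index formula} identifying, point by point, the local Morse contribution with this local intersection multiplicity.
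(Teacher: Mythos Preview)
The paper does not give its own proof of this statement: it is quoted as a known result, with citations to Kashiwara and to Dimca's textbook, and is used as a black box throughout. So there is no ``paper's own proof'' against which to compare your proposal.

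As for the proposal itself: your first route is, as you yourself observe, essentially circular. The proper pushforward formula $CC(Rf_*\mathcal G)=f_*CC(\mathcal G)$ (which the paper also cites separately, as Sabbah's theorem) is at least as deep as the index theorem, and in most developments the two are proved together or the pushforward formula is deduced from local index computations. So the d\'evissage reduction to Gauss--Bonnet--Chern, while formally correct once one grants that input, does not constitute an independent proof. Your second route, via stratified Morse theory and the local index formula identifying the local Morse datum with the multiplicity of the characteristic cycle, is the standard approach and is essentially what one finds in the cited references; the genuine content, as you say, is entirely in that local identification.
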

	Note that even though $T^*X$ is not compact, the intersection number is well-defined because it can be interpreted as a zero cycle in the support of $\sF$ in $X$ (see  \cite[Chapter 6]{Fulton}).

	%When $Z$ is a smooth subvariety of $X$, we have  $CC(\Q_Z)=(-1)^{\dim Z}T^*_Z X$. Equivalently, the characteristic cycle of the perverse sheaf $\Q_Z[\dim Z]$ satisfies that $CC(\Q_Z[\dim Z])=T^*_Z X$. More generally, we have the following well-known result. 
	\begin{proposition}[{\cite[Corollary 5.2.24]{Di}}]\label{prop_eff}
		For any perverse sheaf $\sP$ on $X$, $CC(\sP)$ is effective. In other words, 
		\[
		CC(\sP)=\sum_{i\in I} n_i T_{Z_i}^* X,
		\]
		where $Z_i$ are irreducible subvarieties of $X$, $n_i\in \Z_{>0}$ and the sum is locally finite. 
	\end{proposition}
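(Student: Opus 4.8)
The plan is to reduce the statement to the nonnegativity of the individual multiplicities occurring in $CC(\sP)$. First I would fix a Whitney stratification $\mathcal{S}=\{S_\alpha\}_{\alpha\in A}$ of $X$ with respect to which $\sP$ is constructible; then, by the construction of the characteristic cycle recalled in \cite[\S 5.2]{Di} (see also \cite{MS}), one can write
\[
CC(\sP)=\sum_{\alpha\in A}m_\alpha(\sP)\,\overline{T^*_{S_\alpha}X},\qquad m_\alpha(\sP)\in\Z,
\]
with the sum locally finite because $\mathcal{S}$ is, and with each $\overline{T^*_{S_\alpha}X}=T^*_{\overline{S_\alpha}}X$ a finite sum of conormal varieties $T^*_{Z_i}X$ over the irreducible components $Z_i$ of $\overline{S_\alpha}$. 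Thus it suffices to show $m_\alpha(\sP)\ge 0$ for every $\alpha$, and then to keep only the strata with $m_\alpha(\sP)>0$. (One could first reduce to a simple perverse sheaf $\sP=IC_{\overline{S}}(L)$, using that $CC$ factors through $K_0\big(D_c^b(X,K)\big)$ and that perverse sheaves have finite length, but this does not bypass the essential point below.)

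The core of the argument is the microlocal, stratified-Morse-theoretic description of $m_\alpha(\sP)$. Writing $d_\alpha=\dim S_\alpha$, I would choose a general point $x\in S_\alpha$, a germ of complex submanifold $N\subset X$ through $x$ transverse to $S_\alpha$ with $\dim N=\dim X-d_\alpha$, and a holomorphic function germ $f\colon(N,x)\to(\C,0)$ whose differential at $x$ is a general conormal covector to $S_\alpha$. Then $f$ has a stratified isolated singularity at $x$, and the index theory of characteristic cycles (Dubson--Kashiwara; \cite[Ch.~IX]{KS}, \cite{Kas}, \cite[\S 5.2]{Di}, the same kind of microlocal input underlying \cref{thm_index}) gives
\[
m_\alpha(\sP)=\dim_K\mathcal{H}^{0}\big({}^{p}\!\phi_f(\sP|_N[-d_\alpha])\big)_x ,
\]
where ${}^{p}\!\phi_f$ is the perverse vanishing cycle functor on $N$.

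It then remains to check that the stalk on the right-hand side is concentrated in degree $0$, so that its dimension is visibly $\ge 0$. For this I would invoke two standard perversity facts: (i) since $N$ is a general transverse slice, $\sP\mapsto\sP|_N[-d_\alpha]$ carries $\mathrm{Perv}(X)$ into $\mathrm{Perv}(N)$, so $\sP|_N[-d_\alpha]$ is perverse on $N$; and (ii) the perverse vanishing cycle functor preserves perversity, so $\mathcal{Q}\coloneqq{}^{p}\!\phi_f(\sP|_N[-d_\alpha])$ is a perverse sheaf on $f^{-1}(0)$. Because $f$ has a stratified isolated singularity at $x$, the sheaf $\mathcal{Q}$ is supported at the single point $x$ near $x$, hence is a skyscraper in cohomological degree $0$; therefore $\mathcal{H}^{j}(\mathcal{Q})_x=0$ for $j\ne 0$ and $m_\alpha(\sP)=\dim_K\mathcal{H}^0(\mathcal{Q})_x\ge 0$. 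Assembling this over the strata and decomposing the surviving $\overline{T^*_{S_\alpha}X}$ into irreducible conormal varieties yields $CC(\sP)=\sum_{i\in I}n_iT^*_{Z_i}X$ with $n_i\in\Z_{>0}$ and the sum locally finite, as claimed.

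The main obstacle is really only the microlocal index formula for $m_\alpha(\sP)$ together with the transverse-slice and stratified-Morse statements it rests on; these are classical and are developed in \cite[Ch.~IX]{KS} and \cite[\S 5.2]{Di}, so in the actual write-up I would cite them and assemble the three facts above rather than reprove them.
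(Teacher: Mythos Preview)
Your argument is correct and is precisely the standard stratified-Morse-theoretic proof: the multiplicity $m_\alpha(\sP)$ equals the dimension of the degree-zero stalk of a perverse vanishing cycle sheaf supported at a point, hence is nonnegative. Note, however, that the paper does not give its own proof of this proposition at all---it simply cites \cite[Corollary 5.2.24]{Di} and moves on---so what you have written is essentially a reconstruction of the cited argument rather than a comparison target.
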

	
	Given a constructible complex $\sF$ on a complex manifold $X$, we can define its stalkwise Euler characteristic function $\chi_{st}(\sF): X\to \Z$ by 
	\[
	\chi_{st}(\sF)(x)=\chi(\sF_x).
	\]
	
	Denote the abelian group of $\Z$-valued constructible functions on $X$ by $F(X)$. Then $\chi_{st}$ defines a group homomorphism
	\[
	\chi_{st}: K_0\big(D_c^b(X, K)\big)\to F(X).
	\]
	Moreover, the characteristic cycle $CC: K_0\big(D_c^b(X, K)\big)\to L(X)$ factors through $\chi_{st}$. By abusing notations, we also use $CC$ to denote induced group homomorphism
	\[
	CC: F(X)\to L(X). 
	\]
	%This map is an isomorphism (\cite[Theorem 4.1.38]{Di}). 
	
	Given an irreducible subvariety $Z$ of $X$, the local Euler obstruction function $Eu_Z$ is a constructible function on $Z$ (or on $X$ with value 0 outside $Z$) uniquely characterized by the property that
	\[
	CC(Eu_Z)=(-1)^{\dim Z}T^*_Z X.
	\]
	It turns out that $Eu_Z$ is an intrinsic invariant of $Z$, which does not depend on the embedding to a smooth variety (see e.g. \cite[Page 100-102]{Di} and \cite[Sections 3,4]{Massey2}). 
	Moreover, as $Z$ varies through all irreducible analytic subvarieties of $X$, $Eu_Z$ form a basis of $F(X)$. 
	%Moreover, it is 
	%Over the smooth locus of $Z$, $Eu_Z$ has value 1. In particular, when $Z$ is smooth, $Eu_Z=1_Z$. 
	
	The above global index formula implies that
	\begin{equation}\label{eq_Eu}
		\chi(Eu_Z)=(-1)^{\dim Z} T^*_Z X\cdot T^*_X X. 
	\end{equation}
	
	The advantage of working with $Eu_Z$ and $(-1)^{\dim Z}\chi(Eu_Z)$ instead of $T^*_Z X$ and $T^*_Z X\cdot T^*_X X$ is that the former does not need to involve an embedding. 
	%The definitions of local Euler obstruction and characteristic cycle generalize to analytic varieties. So we can make the following definition. 
	\begin{definition}
		A $\Z$-constructible function $\gamma$ on an analytic variety	$X$ is called \emph{CC-effective} if it is of the following form
		\begin{equation}\label{eq_gamma}
			\gamma=\sum_{i\in I} (-1)^{\dim Z_i} n_i Eu_{Z_i},
		\end{equation}
		where $Z_i$ are irreducible subvarieties of $X$, $n_i\in \Z_{>0}$ and the sum is locally finite. 
	\end{definition}
	When $X$ is smooth, $\gamma$ is CC-effective if and only if $CC(\gamma)$ is an effective cycle in $T^* X$. 
	\begin{lemma}\label{lemma_local}
		A $\Z$-constructible function $\gamma$ on an analytic variety $X$ is CC-effective if and only if for any $x\in X$ and a small open neighborhood $U_x\subset X$ of $X$, the restriction $\gamma|_{U_x}$ is CC-effective. 
	\end{lemma}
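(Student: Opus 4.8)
The forward implication is routine. If $\gamma=\sum_{i}(-1)^{\dim Z_i}n_i\,Eu_{Z_i}$ is a CC-effective expression and $U_x\subset X$ is a small open neighbourhood of a point $x$, I would simply restrict term by term. By the local nature of the Euler obstruction (cf.\ the references on $Eu_Z$ recalled above), $Eu_{Z_i}|_{U_x}=\sum_k Eu_{W_{ik}}$, where the $W_{ik}$ are the irreducible components of $Z_i\cap U_x$, each of dimension $\dim Z_i$. Hence $\gamma|_{U_x}=\sum_{i,k}(-1)^{\dim W_{ik}}n_i\,Eu_{W_{ik}}$; after collecting the (locally finitely many) terms attached to a common component the coefficients remain in $\Z_{>0}$, so $\gamma|_{U_x}$ is CC-effective.

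For the converse, the structural input is that $\{Eu_Z\}_Z$, as $Z$ ranges over irreducible analytic subvarieties, is a basis of $F(X)$. Thus $\gamma$ has a \emph{unique} locally finite expansion $\gamma=\sum_i m_i\,Eu_{Z_i}$ with the $Z_i$ pairwise distinct irreducible and $m_i\neq 0$; comparing with the defining form \eqref{eq_gamma} and using this uniqueness, $\gamma$ is CC-effective if and only if $(-1)^{\dim Z_i}m_i>0$ for every $i$. So the plan is to verify this sign condition one index at a time, using the local hypothesis.

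Fix $i$, choose any point $x\in Z_i$, and (shrinking if necessary, which is allowed by the forward implication) let $U_x$ be a neighbourhood on which $\gamma|_{U_x}$ is CC-effective and which meets only locally finitely many $Z_j$. Restricting the expansion of $\gamma$ gives $\gamma|_{U_x}=\sum_{j,k}m_j\,Eu_{W_{jk}}$, where $W_{jk}$ runs over the irreducible components of the relevant sets $Z_j\cap U_x$, with $\dim W_{jk}=\dim Z_j$. The key point is that these $W_{jk}$ are pairwise distinct: if $W_{jk}=W_{j'k'}$, then this set lies in $Z_j\cap Z_{j'}$ and has dimension $\dim Z_j=\dim Z_{j'}$, and since two distinct irreducible analytic sets of equal dimension meet along a set of strictly smaller dimension (a proper analytic subset of an irreducible analytic variety has strictly smaller dimension), we get $Z_j=Z_{j'}$, hence $(j,k)=(j',k')$. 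Therefore $\sum_{j,k}m_j\,Eu_{W_{jk}}$ is already \emph{the} basis expansion of $\gamma|_{U_x}$ in $F(U_x)$; matching it against a CC-effective expansion of $\gamma|_{U_x}$ and invoking uniqueness gives $m_j=(-1)^{\dim W_{jk}}n_{jk}=(-1)^{\dim Z_j}n_{jk}$ with $n_{jk}\in\Z_{>0}$ for all relevant $j,k$. Since $x\in Z_i\cap U_x$, the index $j=i$ occurs, so $(-1)^{\dim Z_i}m_i>0$. As $i$ was arbitrary, $\gamma=\sum_i(-1)^{\dim Z_i}\bigl((-1)^{\dim Z_i}m_i\bigr)Eu_{Z_i}$ exhibits $\gamma$ as CC-effective.

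The only genuinely technical ingredients are the two standard facts used above: that $Eu_Z$ restricts on an open set to the sum of the Euler obstructions of the (full-dimensional) irreducible components of $Z\cap U_x$, and that distinct irreducible analytic subvarieties of the same dimension intersect in strictly smaller dimension. I expect the bookkeeping around local finiteness and shrinking $U_x$ to be the only other point needing care; conceptually the lemma just records that the basis $\{Eu_Z\}$ of $F(X)$ is detected locally near each $Z$, so positivity of the coefficients in the $Eu$-expansion can be tested neighbourhood by neighbourhood.
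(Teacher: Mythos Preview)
Your proof is correct and follows the same approach as the paper's: both exploit the uniqueness of the expansion in the basis $\{Eu_Z\}$ and the fact that restricting this expansion to $U_x$ yields the unique expansion of $\gamma|_{U_x}$. Your version is more careful in justifying this last point, explicitly verifying that the local irreducible components $W_{jk}$ arising from distinct $Z_j$ are themselves distinct (a step the paper asserts but does not argue).
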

	\begin{proof}
		Since $Eu_Z$ form a basis of $F(X)$, for any constructible function $\gamma$ there is a unique expression \eqref{eq_gamma} with $n_i\in \Z$. Since the local Euler obstruction function is a local invariant, $Eu_{Z}|_{U_x=}Eu_{Z_i\cap U_x}$. Hence,
		\[
		\gamma|_{U_x}=\sum_{i\in I, \; Z_i\cap U_x\neq 0} (-1)^{\dim Z_i} n_i Eu_{Z_i\cap U_x},
		\]
		which is also the unique expression of $\gamma|_{U_x}$ in terms of local Euler obstruction functions. Therefore, $\gamma$ is CC-effective if and only if all $\gamma|_{U_x}$ are CC-effective. 
	\end{proof}
	
	The following proposition is proved in the algebraic setting in \cite[Proposition 7.2 (2)]{AMSS}. Nevertheless, the argument also applies to the analytic setting. 
	\begin{proposition}[Aluffi-Mihalcea-Schürmann-Su]\label{prop_AMSS}
		Let $p: Z'\to Z$ be a finite morphism of irreducible analytic varieties. Then $p_*((-1)^{\dim Z'}Eu_{Z'})$ is CC-effective. More generally, if $\gamma$ is a CC-effective constructible function on $Z'$, then $p_*(\gamma)$ is also CC-effective. 
		%the pushforward of any CC-effective constructible function on $Z'$ is also CC-effective. 
	\end{proposition}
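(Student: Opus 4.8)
The plan is to prove the (stronger) second assertion directly, by rewriting the finite map $p$ as a closed immersion followed by a linear projection and then invoking the functoriality of characteristic cycles under proper pushforward. As a heuristic for why this should hold: if $Z'$ is smooth then $(-1)^{\dim Z'}Eu_{Z'}=\chi_{st}(\Q_{Z'}[\dim Z'])$ and $p_*\Q_{Z'}[\dim Z']$ is perverse because $p$ is finite, so \Cref{prop_eff} settles it at once; the real work is to get rid of the smoothness of $Z'$.

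\emph{Step 1 (localizing and factoring $p$).} By \Cref{lemma_local} the statement is local on $Z$, and for open $V\subseteq Z$ one has $(p_*\gamma)|_V=(p|_{p^{-1}(V)})_*(\gamma|_{p^{-1}(V)})$, while restriction to open subsets preserves $CC$-effectivity (the local Euler obstructions being local, intrinsic invariants, as in the proof of \Cref{lemma_local}). So I may assume $Z$ is a closed analytic subvariety of $\C^N$, with inclusion $i$. Since $p$ is finite, $p_*\cO_{Z'}$ is $\cO_Z$-coherent, hence --- after shrinking $Z$ --- I may also assume that $Z'$ is a closed analytic subvariety of $\C^N\times\C^m$ with $i\circ p=\pi\circ j$, where $j\colon Z'\hookrightarrow\C^N\times\C^m$ is the inclusion and $\pi\colon\C^N\times\C^m\to\C^N$ is the first projection. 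Then $Z'$ is proper over $\C^N$, $j_*\gamma$ is again $CC$-effective (same expression, the $Eu_W$ being intrinsic), and $i_*(p_*\gamma)=\pi_*(j_*\gamma)$ is supported on $Z$. Hence it suffices to prove: if $\delta$ is $CC$-effective on $\C^N\times\C^m$ with $\Supp\delta$ proper over $\C^N$, then $\pi_*\delta$ is $CC$-effective on $\C^N$ --- for then, applied to $\delta=j_*\gamma$ and restricted to $Z$, this yields the $CC$-effectivity of $p_*\gamma$ on $Z$.

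\emph{Step 2 (realizing $\delta$ and passing to characteristic cycles).} Write $\delta=\sum_i n_i(-1)^{\dim W_i}Eu_{W_i}$, a finite sum with $n_i>0$ and $W_i\subseteq\Supp\delta$. Using that $\chi_{st}\colon K_0(D^b_c(W_i,\bQ))\to F(W_i)$ is surjective, pick $\cA_i\in D^b_c(W_i,\bQ)$ with $\chi_{st}(\cA_i)=(-1)^{\dim W_i}Eu_{W_i}$, and put $\cG=\bigoplus_i\cA_i^{\oplus n_i}$, viewed on $\C^N\times\C^m$. Then $\chi_{st}(\cG)=\delta$ and $\Supp\cG\subseteq\bigcup_iW_i$ is proper over $\C^N$, so $R\pi_!\cG=R\pi_*\cG$; a fibrewise computation of stalks (proper base change) gives $\chi_{st}(R\pi_*\cG)=\pi_*\chi_{st}(\cG)=\pi_*\delta$, and since $CC$ factors through $\chi_{st}$ we get $CC(\pi_*\delta)=CC(R\pi_*\cG)$, whereas $CC(\cG)=CC(\delta)$ is effective --- that being exactly the content of $CC$-effectivity of $\delta$ on the smooth space $\C^N\times\C^m$.

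\emph{Step 3 (the microlocal pushforward preserves effectivity).} Finally, the functoriality of characteristic cycles under proper pushforward (\cite{KS}; see also \cite{MS}) gives $CC(R\pi_*\cG)=\varpi_*\,\rho^!\,CC(\cG)$, where $\rho\colon\C^N\times\C^m\times(\C^N)^*\hookrightarrow T^*(\C^N\times\C^m)$ sends $(x,y,\xi)$ to the covector $(\xi,0)$ over $(x,y)$, $\varpi\colon\C^N\times\C^m\times(\C^N)^*\to T^*\C^N$ sends $(x,y,\xi)$ to $(x,\xi)$, $\rho^!$ is the refined Gysin map, and $\varpi_*$ is proper pushforward of cycles --- well-defined on the cycles at hand exactly because $\Supp\delta$ is proper over $\C^N$. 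The decisive feature is that the relative cotangent bundle of $\pi$ is trivial, so $\rho$ is a regular closed immersion with trivial normal bundle $\cO^{\oplus m}$; consequently $\rho^!$ is the $m$-fold intersection with the coordinate hyperplanes of $T^*(\C^N\times\C^m)$, and intersecting an effective cycle with such a hyperplane produces an effective cycle (a component lying in the hyperplane contributes $0$, the hyperplane being the zero set of a section of the trivial line bundle, while any other component contributes the effective divisor of a regular function). Hence $\varpi_*\rho^!$ carries effective cycles to effective cycles, so $CC(\pi_*\delta)$ is effective, i.e. $\pi_*\delta$ --- and therefore $p_*\gamma$ --- is $CC$-effective. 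The step I expect to be the main obstacle is this last one: besides the precise microlocal formula for $CC(R\pi_*(-))$, one needs that the induced operation on conic Lagrangian cycles preserves effectivity, and this is exactly why $p$ is factored through the \emph{linear} projection $\C^N\times\C^m\to\C^N$ rather than through an arbitrary smooth morphism --- for a general smooth morphism the relevant normal bundle is the pulled-back relative cotangent bundle, whose Chern classes need not be effective, whereas here it is trivial. The remaining inputs (surjectivity of $\chi_{st}$, the finite-mapping theorem, proper base change) are standard.
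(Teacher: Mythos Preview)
Your proof is correct. The paper does not supply its own argument here—it simply cites \cite[Proposition~7.2(2)]{AMSS} and remarks that the algebraic proof carries over to the analytic category—so there is no in-paper proof to compare against directly. That said, your strategy (localize, factor the finite map as a closed immersion followed by a linear projection, invoke the proper-pushforward formula for characteristic cycles, and exploit the triviality of the relative cotangent bundle to see that the microlocal Gysin pullback preserves effectivity) is exactly the mechanism the paper itself deploys later in \Cref{lemma_ef} and the proof of \Cref{prop_E}; there the effectivity-under-Gysin step is isolated as a standalone lemma and proved via deformation to the normal cone followed by intersection with a generic section of the trivial bundle, rather than by your iterated-hyperplane argument, but the two arguments are equivalent. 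One minor simplification: Step~2 is an unnecessary detour. Since \Cref{thm_Sabbah} is already stated at the level of constructible functions, you can apply it directly to $\delta$ and skip the construction of the auxiliary complex $\cG$; the passage through $D^b_c$ is harmless but adds nothing.
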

	
	\subsection{A proper pushforward formula}\label{sec_push}
	Given a constructible function $\gamma$ on a complex manifold $X$, we have considered $CC(\gamma)$ as an analytic $\dim(X)$-cycle in $T^*X$. To study the functorial properties of the characteristic cycles, we also need to consider them as Borel-Moore homology classes in an appropriate subspace of $T^*X$. 
	
	Assume $\gamma$ is constructible with respect to a Whitney stratification $\mathscr{S}$ of $X$. We define the \emph{conormal space} of $\mathscr{S}$ to be 
	\[
	T^*_{\mathscr{S}}X=\bigcup_{S\in \mathscr{S}}T^*_{\overline{S}}X,
	\]
	where $\overline{S}$ is the closure of the stratum $S$ in $X$. Then $T^*_{\mathscr{S}}X$ is a locally finite union of conic Lagrangian subvarieties. The analytic cycle $CC(\gamma)$ is supported in $T^*_{\mathscr{S}}X$, and it represents a class in $H^{BM}_{2\dim X}(T^*_{\mathscr{S}}X, \Z)$. 
	For the rest of this section, we identify a characteristic cycle as the Borel-Moore homology class it represents. 
	
	Let $f: X\to Y$ be a proper holomorphic map between complex manifolds. Let $\gamma$ be a constructible function on $X$. We review a formula to compute $CC(f_*(\gamma))$. 
	
	By the theorem in \cite[Part I, Section 1.7]{GM}, there exist Whitney stratifications $\mathscr{S}$ and $\mathscr{S}'$ of $X$ and $Y$ respectively, satisfying
	\begin{enumerate}
		\item $\gamma$ is constructible with respect to $\mathscr{S}$, and
		\item for any stratum $S\in \mathscr{S}$, there exists a stratum $S'\in \mathscr{S}'$ such that $f$ induces a submersion $S\to S'$. 
	\end{enumerate}
	
	The map $f$ induces
	\[
	T^*X\xleftarrow{u_1} f^*T^*Y\xrightarrow{u_2} T^*Y \quad \text{and}\quad T_{\mathscr{S}}^*X\xleftarrow{u_1} u_1^{-1}T_{\mathscr{S}}^*X\xrightarrow{u_2} T_{\mathscr{S}'}^*Y.
	\]
	\begin{theorem}[{\cite[Théorème 2.2]{Sab}, see also \cite[Proposition10.3.46]{MS}\label{thm_Sabbah}}]
		Under the above notations,
		\begin{equation*}%\label{eq_CC}
			CC(f_*(\gamma))=u_{2*}u_1^*(CC(\gamma)),
		\end{equation*}
		where $CC(\gamma)\in H^{BM}_{2\dim X}(T^*_{\mathscr{S}}X, \Z)$, 
		$u_1^*(CC(\gamma))\in H^{BM}_{2\dim Y}(u_1^{-1}T_{\mathscr{S}}^*X, \Z)$, 
		and $u_{2*}u_1^*(CC(\gamma))\in H^{BM}_{2\dim Y}(T_{\mathscr{S}'}^*Y, \Z)$.
	\end{theorem}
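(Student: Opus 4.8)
The plan is to prove the identity by reducing $f$, via its graph, to two elementary cases---a closed embedding and a smooth morphism---and then to verify each case microlocally. Both sides are group homomorphisms in $\gamma\in F(X)$ (the maps $CC$, $f_*$, $u_1^*$ and $u_{2*}$ all are), and since the $Eu_W$ form a basis of $F(X)$ it suffices to treat $\gamma=Eu_W$ for $W\subseteq X$ irreducible and closed. Now factor $f=q\circ\iota$, where $\iota\colon X\xrightarrow{\ \sim\ }\Gamma_f\hookrightarrow X\times Y$ is the graph embedding and $q\colon X\times Y\to Y$ is the second projection; although $q$ is not proper, $q|_{\Gamma_f}$ is, and $\iota_*\gamma$ is supported on $\Gamma_f$, so $f_*\gamma=q_*\iota_*\gamma$ is defined. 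After refining $\mathscr S$, $\mathscr S'$ and pulling them back to $X\times Y$, the three correspondences $T^*X\leftarrow f^*T^*Y\to T^*Y$, $T^*X\leftarrow\iota^*T^*(X\times Y)\to T^*(X\times Y)$ and $T^*(X\times Y)\leftarrow q^*T^*Y\to T^*Y$ fit into a commutative diagram compatible with these stratifications, so---granting functoriality of $u^*$ and $u_*$ on the relevant Borel--Moore homology groups---the general formula reduces to two special cases: $f$ a closed embedding, and $f$ a smooth morphism whose restriction to $\Supp(\gamma)$ is proper.

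For a closed embedding $\iota\colon X\hookrightarrow Z$, the map $u_1\colon\iota^*T^*Z\to T^*X$ is the surjective restriction-of-covectors bundle map, hence a vector-bundle projection with kernel the conormal bundle $T^*_XZ$, while $u_2\colon\iota^*T^*Z\to T^*Z$ is a closed embedding. Thus $u_1^*$ is the (degree-shifting) isomorphism on Borel--Moore homology induced by this vector bundle, and $u_{2*}$ is proper pushforward; the identity then says exactly that $CC$ commutes with extension by zero along a closed submanifold. This is immediate from the definition of $CC$, or one checks it on $\gamma=Eu_W$ using $\iota_*Eu_W=Eu_W$ (as functions on $Z$) together with $CC(Eu_W)=(-1)^{\dim W}T^*_W(\,\cdot\,)$ computed both in $X$ and in $Z$.

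The smooth case is the crux. Let $g\colon V\to Y$ be a submersion of manifolds, $\gamma$ constructible for a Whitney stratification $\mathscr S$ whose strata map submersively onto strata of $\mathscr S'$, with $\Supp(\gamma)$ proper over $Y$. Here $u_1\colon g^*T^*Y\to T^*V$ is a closed embedding onto the subbundle $H\subset T^*V$ of covectors vanishing on vertical directions, so $u_1^*$ is Gysin restriction to the smooth closed submanifold $H$ (of codimension $\dim V-\dim Y$, which accounts for the degree shift), and $u_2\colon H\cong g^*T^*Y\to T^*Y$ is proper on $u_1^{-1}(T^*_{\mathscr S}V)$ because $\Supp(\gamma)$ is proper over $Y$; hypothesis (2) is precisely what forces $T^*_{\overline S}V$ to meet $H$ in the expected dimension for every stratum $S$. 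To match the two sides I would compute the multiplicity of each along a conormal variety $T^*_{\overline{S'}}Y$ by intersecting with a generic covector $(y_0,\xi_0)$ over $S'$. On the left, the microlocal (Dubson--Kashiwara) index formula expresses this multiplicity as an alternating sum of Euler characteristics of normal Morse data of $g_*\gamma$ at $y_0$ in the codirection $\xi_0$; on the right it is the sum of the corresponding normal Morse data of $\gamma$ over the strata of $V$ lying above $S'$. The submersivity in (2) makes a normal slice to $S'$ at $y_0$ pull back, fiberwise over $g^{-1}(y_0)$, to a compatible family of normal slices for the strata of $V$, and a relative stratified Morse theory argument in the spirit of Goresky--MacPherson then identifies the Morse datum of $g_*\gamma$ at $y_0$ with the fiber integral $\chi\big(g^{-1}(y_0),\,\gamma|_{\text{slice}}\big)$ of those of $\gamma$---which is exactly the multiplicity of $u_{2*}u_1^*CC(\gamma)$ along $T^*_{\overline{S'}}Y$.

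The main obstacle is this last fiberwise Morse-theoretic identity, together with the careful bookkeeping of orientations and degree shifts in the Gysin maps $u_1^*$ so that the two multiplicities agree \emph{with the correct signs}; this is where the two stratification hypotheses enter essentially and where I expect the argument to require the most care. An alternative that avoids explicit Morse data is to characterize $CC(\gamma)$ through the global index formula $\chi(X,\gamma)=CC(\gamma)\cdot T^*_XX$ applied after all non-characteristic base changes and to deduce the equality of cycles from $\chi(Y,g_*\gamma)=\chi(X,\gamma)$ and its deformed versions---but making ``all generic covectors'' precise again reduces to the same local computation on the fibers of $g$.
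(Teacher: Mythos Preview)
The paper does not give its own proof of this theorem. It is quoted as a known result of Sabbah \cite[Th\'eor\`eme 2.2]{Sab}, with a secondary reference to \cite[Proposition 10.3.46]{MS}; the only surrounding text explains how $u_{2*}$ and $u_1^*$ are to be interpreted, and then the paper moves on. The result is used later as a black box inside the proof of Proposition~\ref{prop_E} (the Claim comparing $u_{2\star}(\Lambda'')$ with $CC(f_*(\gamma))$). So there is no ``paper's own proof'' to compare your proposal against.

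As an independent comment on your outline: the graph factorization into a closed embedding followed by a submersion is the standard route, and your treatment of the closed-embedding case is fine. For the smooth case you have correctly identified the heart of the matter---matching normal Morse data of $g_*\gamma$ with a fiber integral of normal Morse data of $\gamma$---but the argument as written is still a sketch rather than a proof: the ``relative stratified Morse theory argument in the spirit of Goresky--MacPherson'' is exactly where all the work lies, and you have not actually carried it out. If you want to turn this into a complete proof, the cleanest route is probably not via raw Morse data but via Sabbah's original approach (specialization to the normal cone and the calculus of bivariant classes) or via the sheaf-theoretic treatment in Kashiwara--Schapira, where the compatibility of $CC$ with proper pushforward is packaged as functoriality of the microlocal Euler class; both references already cited by the paper contain full proofs.
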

	Here, $u_{2*}$ is the proper pushforward of Borel-Moore homology, and $u_1^*$ is defined via Poincar\'e duality as the pullback of local cohomology (see \cite[Page 734]{MS} for more details). 
	%More precisely, 
	%\begin{equation}\label{eq_BM}
	%\Lambda\in H_{2d}^{BM}(\Lambda, \Z), \;\; u_1^*(\Lambda)\in H_{2b}^{BM}(u_1^{-1}(\Lambda), \Z),\;\; \text{and}\;\; u_{2*}u_1^*(\Lambda)\in H_{2b}^{BM}(u_{2}u_1^{-1}(\Lambda), \Z)
	%\end{equation}
	%where $b$ is the complex dimension of $B$ and a cycle is also identified with the underlying space when defining homology groups. 

	\subsection{Multivalued  one-forms}
	%\subsection{Multivalued one-forms and the Shafarevich morphism}
	%\textcolor{red}{How about we give some brief introduction to the Shafarevich map at the end of this subsection?}
	\begin{definition}[Multivalued  one-form]\label{def:multivalued}
		Let $X$ be a complex manifold and let $E$ be a  holomorphic  vector bundle on $X$.  A \emph{multivalued section} $\eta$ on $X$ is   a formal sum $ \sum_{i=1}^{m}n_i\Gamma_i$, where each $n_i\in \mathbb{Z}_{>0}$ and each $\Gamma_i$  is an irreducible closed subvariety of $E$ such that  
		the natural projection $\Gamma_i\to X$ is a finite surjective morphism. 
		Such multisection $\eta$ is called \emph{$d$-valued} for $d=\sum_{i=1}^m n_i\deg[\Gamma_i: X]$. 
		We say that $\eta$ is \emph{reduced} if all $\Gamma_i$ are distinct, and $\eta$ is \emph{irreducible} if $m=1$ and $n_1=1$. 
		We say that $\eta$ is \emph{trivial} if $m=1$ and $\Gamma_1$ is the zero section of $E\to X$.   Multivalued sections of $T^*X$ will be called \emph{multivalued (holomorphic) one-forms}. 
	\end{definition}
	
	Let $\eta$ be a multivalued one-form on a complex manifold $X$. We sometimes denote by $\Gamma_\eta$ instead of $\eta$ to emphasize it is an analytic cycle in $T^*X$. 
	By definition, there exists a largest Zariski open subset $X^\circ\subset X$ such that $\Gamma_\eta\cap T^*X^{\circ}\to X^{\circ}$ is \'etale. We call $X^\circ$ to be the \emph{unbranching locus} of $\eta$ and the complement $X\setminus X^{\circ}$ the \emph{branching locus}.  In this case, for any $x\in X^\circ$, there exist  an open neighborhood of $U$ of $x$ and holomorphic one-forms $\eta_1,\ldots,\eta_m$ on $U$ such that $\eta|_{U}$ is equal to the union of $\eta_1,\ldots,\eta_m$.  Therefore, we say that $\eta$ is \emph{closed} if for any point $x\in X^\circ$, each of the above $\eta_i$ is a closed one-form.
	
	\begin{remark}\label{remark_restriction}
		Let $Y$ be a locally closed complex submanifold of $X$. Given any multivalued one-form $\eta$ of $X$, using the pullback map $T^*X|_{Y}\to T^*Y$ we can define the restriction multivalued one-form $\eta|_{Y}$. Clearly, when $\eta$ is closed, so is $\eta|_Y$. 
	\end{remark}

	\begin{lemma}\label{lemma_closed}
		Let $X$ be a projective manifold. Every multivalued one-form $\eta$ on $X$ is closed. 
	\end{lemma}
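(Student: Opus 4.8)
The plan is to reduce the statement to the classical fact that every holomorphic one-form on a compact Kähler manifold is automatically $d$-closed, by exhibiting the local branches of $\eta$ as pullbacks of a single \emph{global} holomorphic one-form living on a smooth projective model of a component of $\Gamma_\eta$.

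First I would reduce to the case where $\eta$ is irreducible. Writing $\eta=\sum_i n_i\Gamma_i$ with unbranching locus $X^\circ$, over $X^\circ$ the support $\bigcup_i\Gamma_i$ is smooth, so its irreducible pieces $\Gamma_i\cap T^*X^\circ$ are pairwise disjoint and each is \'etale over $X^\circ$; consequently the one-forms $\eta_1,\dots,\eta_m$ whose union equals $\eta|_U$ on a small $U\subset X^\circ$ are exactly the branches contributed by the various $\Gamma_i$. Hence it suffices to prove that for a single irreducible $\Gamma$ with finite surjective projection $p\colon\Gamma\to X$, every branch of $\Gamma$ over its unbranching locus is closed.

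Next I would produce the relevant global form. On $\Gamma\subset T^*X$ there is a tautological section $s_\Gamma\in H^0(\Gamma,p^*T^*X)$ sending a point $(x,\xi)\in\Gamma$, with $\xi\in T^*_xX$, to $\xi\in(p^*T^*X)_{(x,\xi)}=T^*_xX$. Since $p$ is finite and $X$ is projective, $\Gamma$ is projective; moreover $\Gamma^\circ:=p^{-1}(X^\circ)$ is smooth because $p$ is \'etale over $X^\circ$. I would then choose a resolution $\mu\colon Y\to\Gamma$ that is an isomorphism over $\Gamma^\circ$, put $f=p\circ\mu\colon Y\to X$, and let $\omega\in H^0(Y,T^*Y)$ be the image of $\mu^*s_\Gamma$ under the canonical pullback-of-differentials map $f^*T^*X\to T^*Y$. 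Since $Y$ is a smooth projective, hence compact Kähler, manifold, $\omega$ is $d$-closed (holomorphic one-forms on compact Kähler manifolds are harmonic, hence closed).

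It then remains to compare $\omega$ with $\eta$. For a small $U\subset X^\circ$ the map $f$ is \'etale over $U$ and $f^{-1}(U)=\bigsqcup_j V_j$ with each $f|_{V_j}\colon V_j\xrightarrow{\ \sim\ }U$; under the identification $f^*T^*X|_{V_j}\cong T^*V_j$ coming from this isomorphism, $\mu^*s_\Gamma|_{V_j}$ becomes $(f|_{V_j})^*\eta_j$, where $\eta_j$ is the branch of $\eta$ cut out by $V_j$. Therefore $\omega|_{V_j}=(f|_{V_j})^*\eta_j$, and $d\omega=0$ forces $(f|_{V_j})^*(d\eta_j)=0$, hence $d\eta_j=0$ as $f|_{V_j}$ is a biholomorphism; since $x\in X^\circ$ and $U$ were arbitrary, $\eta$ is closed. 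The only genuinely delicate points are bookkeeping ones --- checking that $\Gamma^\circ$ is smooth so that a resolution isomorphic over it exists, and verifying carefully that the tautological section restricts to the defining branches of $\eta$ on each sheet $V_j$ --- after which closedness is immediate from Hodge theory on $Y$.
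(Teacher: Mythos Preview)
Your proof is correct and is essentially the same as the paper's: both reduce to an irreducible $\Gamma$, resolve it to a smooth projective $Y$, pull back the tautological object to obtain a global holomorphic one-form on $Y$, invoke Hodge theory to get closedness, and then compare with the local branches over the unbranching locus. Your tautological section $s_\Gamma\in H^0(\Gamma,p^*T^*X)$ pushed to $T^*Y$ is exactly the pullback to $Y$ of the Liouville one-form $\theta$ on $T^*X$ that the paper uses, so the two constructions coincide.
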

	\begin{proof}
		Without loss of generality, we can assume that $\eta$ is irreducible. Since $X$ is projective and the natural map $\Gamma_\eta\to X$ is finite, $\Gamma_\eta$ is a projective variety. Let $Y\to \Gamma_\eta$ be a resolution of singularity, which is an isomorphism over the smooth locus of $\Gamma_\eta$. Let $\theta$ be the tautological holomorphic one-form on $T^*X$, and let $\theta_Y$ be the pullback of $\theta$ via the composition $Y\to \Gamma_\eta\to T^*X$. Since $Y$ is a projective manifold, by Hodge theory $\theta_Y$ must be closed. Over the unbranching locus of $\eta$, the composition $Y\to \Gamma_\eta\to X$ is \'etale, and the pushforward of $\theta_Y$ to $X$ is equal to $\eta$. Thus, $\eta$ is closed. 
	\end{proof}
	By the work of \cite{Eys04} and \cite{CDY22}, we can construct non-trivial multivalued one-forms from unbounded representations $\pi_1(X)\to \GL(r,K)$ where $K$ is a non-archimedean local field.    
	\begin{proposition}\label{thm:KE}
		Let $X$ be a smooth projective variety and let $\rho:\pi_1(X)\to \GL(r,K)$ be a  reductive representation where $K$ is a non-archimedean local field.  Then there exists a proper surjective morphism $s_\rho:X\to S_\rho$ to a normal projective variety with connected fibers and a closed multivalued one-form $\eta_\rho$ such that  for any irreducible closed subvariety $Z\subset X$, the following properties are equivalent: 
		\begin{enumerate}
			\item\label{item:KZ1} $\rho({\rm Im}[\pi_1(Z_{\rm norm})\to \pi_1(X)])$ is bounded;
			\item \label{item:KZ3} $\rho({\rm Im}[\pi_1(Z)\to \pi_1(X)])$ is bounded;
			\item \label{item:KZ2}  $s_\rho(Z)$ is a point;
			\item  \label{item:KZ4}  the restriction  $\eta_\rho|_{Z_{\reg}}$ is trivial, where $Z_{\rm reg}$ is the smooth locus of $Z$.
		\end{enumerate}  
		In particular, if $\rho(\pi_1(X))$ is unbounded, then $\eta_\rho$ is non-trivial.  
	\end{proposition}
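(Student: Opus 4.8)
The plan is to extract both $\eta_\rho$ and $s_\rho$ from the Gromov--Schoen harmonic map of $\rho$ to the Bruhat--Tits building, following Eyssidieux's archimedean construction in \cite{Eys04} and its non-archimedean counterpart in \cite{CDY22}. Since $K$ is a non-archimedean local field and $\rho$ is reductive, \cite{GS92} furnishes a $\rho$-equivariant harmonic map $u\colon \widetilde{X}^{\univ}\to\Delta$ to the locally finite $\mathrm{CAT}(0)$ Bruhat--Tits building $\Delta$ of $\GL(r,K)$. As $X$ is compact Kähler, $u$ is pluriharmonic, and there is a closed analytic subset $\Sigma\subsetneq X$ of codimension $\ge 2$ (the branching-and-singular locus of $u$) such that over $X\setminus\Sigma$ the map $u$ locally factors through an apartment $\mathbb A\cong\mathbb R^N$; composing with the affine coordinates of $\mathbb A$ produces local pluriharmonic $\mathbb R$-valued functions whose $(1,0)$-parts are local holomorphic one-forms.

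Next I would assemble these into $\eta_\rho$. Crossing a wall of $\Delta$ permutes these local one-forms by the finite Weyl group $W\cong\mathfrak S_r$, so their elementary symmetric functions are globally defined holomorphic symmetric forms on $X\setminus\Sigma$; by Hartogs (codimension $\ge 2$) they extend over $\Sigma$, which forces the closure $\Gamma_\eta\subset T^*X$ of the union of the graphs of the local one-forms to be finite over $X$ (the roots of a monic equation with locally bounded holomorphic coefficients stay bounded). Thus $\eta_\rho=\Gamma_\eta$ is a multivalued holomorphic one-form in the sense of \cref{def:mutivalued}, and it is closed by \cref{lemma_closed} since $X$ is projective. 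By construction, for a locally closed submanifold $Y\subset X$ the restriction $\eta_\rho|_Y$ (\cref{remark_restriction}) is trivial if and only if $u$ is locally constant on the preimage of $Y$ in $\widetilde{X}^{\univ}$, i.e. $du|_{TY}\equiv 0$.

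Then I would invoke \cite{Eys04,CDY22}: the closed multivalued one-form $\eta_\rho$ --- equivalently the map $u$ --- gives rise to a proper surjective morphism with connected fibers $s_\rho\colon X\to S_\rho$ onto a normal projective variety whose fibers are precisely the maximal connected closed analytic subsets $F\subset X$ on which $u$ is (lift-wise) constant, equivalently with $\rho(\mathrm{Im}[\pi_1(F)\to\pi_1(X)])$ bounded (a subgroup of $\GL(r,K)$ is bounded iff it fixes a point of $\Delta$). Granting this, the four conditions follow formally. $(2)\Rightarrow(1)$ is the inclusion $\mathrm{Im}[\pi_1(Z_{\norm})\to\pi_1(X)]\subseteq\mathrm{Im}[\pi_1(Z)\to\pi_1(X)]$. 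For $(1)\Rightarrow(3)$, pick a resolution $\hat Z\to Z_{\norm}$ with $\mathrm{Im}[\pi_1(\hat Z)\to\pi_1(X)]=\mathrm{Im}[\pi_1(Z_{\norm})\to\pi_1(X)]$; this group being bounded fixes a point $p\in\Delta$, so $d(u(\cdot),p)^2$ pulled back to $\widetilde{\hat Z}^{\univ}$ is $\pi_1(\hat Z)$-invariant, descends to a subharmonic function on the compact manifold $\hat Z$, hence is constant, and the standard convexity estimate for harmonic maps to $\mathrm{CAT}(0)$ spaces (the energy density of $u$ is dominated by the Laplacian of this function) forces the pulled-back $u$ to be constant; therefore $Z$ lies in a fiber of $s_\rho$. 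For $(3)\Rightarrow(2)$, apply the fiber characterization to $F=s_\rho^{-1}(s_\rho(Z))$ and use $\mathrm{Im}[\pi_1(Z)\to\pi_1(X)]\subseteq\mathrm{Im}[\pi_1(F)\to\pi_1(X)]$. Finally $(3)\Leftrightarrow(4)$ is the last sentence of the previous paragraph applied to $Y=Z_{\reg}$: $\eta_\rho|_{Z_{\reg}}$ is trivial iff $u$ is locally constant on the lift of $Z_{\reg}$, iff (by continuity and connectedness) $u$ is constant on the entire fiber, iff $Z$ sits inside a fiber of $s_\rho$. The concluding assertion is the case $Z=X$.

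The main obstacle is the third step: the algebraicity of the (multi-)foliation defined by $\eta_\rho$ on $X\setminus\Sigma$ and the construction of $s_\rho$ with exactly the stated fibers, which is the technical heart of \cite{Eys04} in the archimedean setting and of its non-archimedean adaptation in \cite{CDY22}; I would quote it rather than reprove it. A secondary delicate point is controlling $\Sigma$ in the second step well enough to extend the spectral cover $\Gamma_\eta$ across it as a genuine finite cover, and thereby to make $\eta_\rho$ nontrivial precisely when $\rho(\pi_1(X))$ is unbounded.
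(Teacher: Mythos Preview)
Your proposal is essentially correct and lines up with the paper on the parts that are quoted from the literature: both you and the paper take the construction of $\eta_\rho$ and the reduction map $s_\rho$ from \cite{Eys04,CDY22}, and both defer the equivalence of \ref{item:KZ1}, \ref{item:KZ3}, \ref{item:KZ2} to those references. The only piece the paper actually proves is $\ref{item:KZ2}\Leftrightarrow\ref{item:KZ4}$, and here your route diverges from theirs. You argue analytically via the harmonic map: $\eta_\rho|_{Z_{\reg}}$ trivial $\Leftrightarrow$ $du$ vanishes along the lift of $Z_{\reg}$ $\Leftrightarrow$ $u$ is constant on each component of the lift of $Z$ $\Leftrightarrow$ $Z$ lies in a fiber of $s_\rho$. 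The paper instead passes to a finite Galois spectral cover $f:Y\to X$ with group $G$ on which $f^*\eta_\rho$ splits into global one-forms $\omega_1,\dots,\omega_m\in H^0(Y,f^*\Omega_X^1)$, builds the partial Albanese map $a:Y\to A$ for these forms, and identifies $s_\rho$ with the Stein factorization of $X\to A/G$. Then $s_\rho(Z)$ is a point iff $a(Z')$ is a point for a component $Z'\subset f^{-1}(Z)$, iff all $\omega_i|_{Z'_{\reg}}=0$ (an elementary lemma about Albanese maps), iff $\eta_\rho|_{Z_{\reg}}$ is trivial. The paper's argument is purely algebro-geometric once $\eta_\rho$ exists and completely avoids the singular locus $\Sigma$ of $u$ and any regularity issues for the harmonic map restricted to $Z$; your argument is more direct but, as you yourself flag, you have to be careful that the equivalence ``$\eta_\rho|_Y$ trivial $\Leftrightarrow du|_{TY}\equiv 0$'' survives the Hartogs extension across $\Sigma$ and that the fiber characterization of $s_\rho$ you quote is stated in the form you need.
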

	We will call the above map $s_\rho$ the (Katzarkov-Eyssidieux) reduction map for $\rho$.  The construction  of the multivalued holomorphic one-form $\eta_\rho$ associated with $\rho$ can be found in \cite[Step 2 in the proof of Theorem H]{CDY22}. The closedness of $\eta_\rho$ follows from \cref{lemma_closed}. The equivalence of Item \ref{item:KZ1} and  Item \ref{item:KZ2} is proved by Katzarkov \cite{Kat97} and Eyssidieux \cite{Eys04}.   The equivalence of the first three items can be found in \cite[Theorem H]{CDY22}.  Hence we only need to prove their equivalence to Item \ref{item:KZ4}. % and the closedness of $\eta_\rho$. 
	\begin{proof}[Proof of the equivalence of Items \ref{item:KZ2} and \ref{item:KZ4}]
		To simplify the notation, we write $\eta$ instead of $\eta_\rho$.   By \cite[Definition 5.11]{CDY22}, we can find a finite Galois cover $f:Y\to X$  of Galois group $G$ (so-called \emph{spectral covering}) from a normal projective variety such that $f$ is \'etale outside the branching locus of $\Gamma_{\eta}\to X$, and  $f^*\eta$ becomes single-valued, i.e. there exist sections $\{\omega_1,\ldots,\omega_m\}\subset H^0(Y,f^*\Omega_X^1)$ such that $f^*\eta=\{\omega_1,\ldots,\omega_m\}$. Let $a:Y\to A$ be the \emph{partial Albanese map} associated with $\{\omega_1,\ldots,\omega_m\}$  (cf. \cite[Definition 5.19]{CDY22}), where $A$ is an abelian variety. By \cite[Claim 5.15]{CDY22}, $\{\omega_1,\ldots,\omega_m\}$ is invariant under $G$. Moreover, by \cite[Step 4 in the proof of Theorem H]{CDY22},  $G$ acts on $A$ such that $a$ is $G$-equivariant. Then by \cite[Proof of Theorem H]{CDY22}, $s_\rho$ is the Stein factorization of the quotient map $X\to A/G$, as in the commutative diagram, 
		%		\begin{equation}
			%			\begin{tikzcd}
				%				Y\arrow[r, "f"] \arrow[dd,"a"] & X\arrow[d,"s_\rho"']\arrow[dd, bend left=38]\\
				%				& S_\rho\arrow[d]\\
				%				A\arrow[r] & A/G\end{tikzcd}
			%		\end{equation}
		\begin{equation}
			\begin{tikzcd}
				Y\arrow[d, "f"] \arrow[rr,"a"] && A\arrow[d, "\pi"]\\
				X\arrow[r, "s_\rho"]&S_\rho\arrow[r]& A/G.
			\end{tikzcd}
		\end{equation}
		Let $Z'$ be a connected component of $f^{-1}(Z)$. Then $Z'$ and $Z$ have the same image in $A/G$. Since $s_\rho$ is the Stein factorization of $X\to A/G$, %$a(Z')$ is a point if and only if $\pi\circ a(Z')$ is a point if and only if $s_\rho(Z)$ is a point. 
		\[
		\text{$a(Z')$ is a point}\Leftrightarrow \text{$\pi\circ a(Z')$ is a point}\Leftrightarrow \text{$s_\rho(Z)$ is a point}.
		\]
		%Then there exists a subgroup $G_o\subset G$ such that $Z'\to Z$ is a finite Galois cover with the Galois group $G_o$. 
		%Therefore, $a(Z')$ is a point if and only if $s_\rho(Z)$ is a point.  
		By \cite[Lemma 1.1]{CDY22}$, a(Z')$ is a point if and only if $\omega_i|_{Z'_{\rm reg}}= 0$ for each $i$. %where $Z'_{\rm reg}$ is the smooth locus of $Z'$.
		
		Clearly, $\omega_i|_{Z'_{\rm reg}}= 0$ for each $i$ if and only if $\eta|_{Z_{\reg}}$ is trivial.
		Thus, the equivalence between Items \ref{item:KZ2} and   \ref{item:KZ4} follows.
	\end{proof}
	
	\subsection{A factorization map}\label{sec_Stein} 
	In this subsection, we will review some constructions in  the proof of the reductive Shafarevich conjecture in \cite{DY23}. We first apply Proposition \ref{thm:KE} to construct a fibration which is essential in our proof. This construction allows us to factorize non-rigid representations into those underlying $\bC$-VHS with discrete monodromy. 
	\begin{definition}[Factorization map]\label{def:reduction ac}
		Let $X$ be a smooth projective variety. 	We fix a positive integer $r>0$.	  We define a \emph{factorization map} $s_{{\rm fac},r}:X\to S_{{\rm fac},r}$ to be the \emph{simultaneous Stein factorization} of the Katzarkov-Eyssidieux reductions   $\{s_{\tau}:X\to S_\tau\}_{\tau}$, where  $\tau:\pi_1(X)\to \GL(r,K)$ ranges over all semisimple representations with $K$  a non-archimedean local field of characteristic $0$. We refer the readers to  \cite[Lemma 1.28]{DY23} and \cite[Lemma on page 7]{Car60} for the precise definition of the simultaneous Stein factorization. In particular, $s_{{\rm fac},r}:X\to S_{{\rm fac},r}$  is a proper morphism to a normal projective variety with connected fibers such that
		\begin{enumerate}
			\item  all the above maps $s_{\tau}$ factor  through $s_{{\rm fac},r}$;
			\item for any closed subvariety $Z$ of $X$,   $s_\tau(Z)$ is a point for all the above $s_\tau$ if and only if $s_{{\rm fac},r}(Z)$ is a point.
		\end{enumerate}
	\end{definition}   
	\subsection{Shafarevich morphism} 
	We will recall the definition of the Shafarevich morphism of a representation of the fundamental group of a projective variety. 
	%enabling us to utilize \cref{prop_linear} in addressing the proof of \cref{thm_main} in the case where $\rho$ is linear and $K=\bC$. 
	% This will be our foundation of the proof of \cref{thm_main} in the case when $K=\bC$ and $\rho$ is not necessarily semisimple. 
	\begin{definition} [Shafarevich morphism]\label{def:Shafarevich}
		Let $X$ be a projective manifold.
		\begin{thmlist}
			\item Let $H$  be a normal subgroup of $\pi_1(X)$. The \emph{Shafarevich morphism} of the pair $(X,H)$ is a holomorphic map to a normal projective variety ${\rm sh}_H:X\to \mathrm{Sh}_H(X)$ with connected fibers such that for any subvariety $Z$ of $X$, $ \im[\pi_1(Z_{\norm})\to \pi_1(X)/H]$ is finite if and only if $\mathrm{sh}_H(Z)$ is a point. 
			\item The \emph{Shafarevich morphism} of  a   linear presentations of $\varrho:\pi_1(X)\to \GL(r,K)$, denoted by $\mathrm{sh}_\varrho: X\to \mathrm{Sh}_\varrho(X)$, is the Shafarevich morphism  of the pair $(X,\ker\varrho)$. 
			\item Let $M$ be a subset of the moduli space  of representations $M_{\rm B}(\pi_1(X),\GL_r)(\bC)$.   The \emph{reductive Shafarevich morphism} of $M$, denoted by $\mathrm{sh}_M: X\to \mathrm{Sh}_M(X)$, is  the Shafarevich morphism  of the pair $(X,H)$, where $H$ is the intersection of kernels of all \emph{semisimple} representations $\varrho:\pi_1(X)\to \GL(r,\bC)$ with $[\varrho]\in M$. 
		\end{thmlist}
	\end{definition}
	The Shafarevich morphism is unique if it exists. The existence of $\mathrm{sh}_M$ for various choices of $M$ is proved in \cite{Eys04,EKPR12,DY23,DY24}. Note that for if $\varrho:\pi_1(X)\to \GL(r,K)$ is a large representation, then the Shafarevich morphism ${\rm sh}_\varrho:X\to {\rm Sh}_\varrho(X)$ of $\varrho$ is the identity map.  	
	\subsection{Pure Hodge structures and period maps}\label{sec:period}
	In this subsection we briefly review the definitions of $\bC$-Hodge structures, pure period domains and period maps.   We refer the readers to \cite{CMP,SS22} for more details. 
	
	A polarized $\bC$-Hodge structure (of weight $m$) is a triple $(V=\oplus_{p+q=m}V^{p,q},S)$, where $V$ is a $\bC$-vector space together with  a decomposition $V=\oplus_{p+q=m}V^{p,q}$, and $S$ is the  \emph{polarization} that is a non-degenerate hermitian  form on $V$ such that the above decomposition is orthogonal with respect ot $S$ and $(-1)^pS|_{V^{p,q}}$ is positive-definite.  If $(V,S)$ is endowed with a real structure such that we have the complex conjugate $\overline{F^{p,q}}=F^{q,p}$, then it is called a $\bR$-Hodge structure. 
	
	The Hodge filtration is defined to be $F^p:=\oplus_{i\geq p}V^{i,m-i}$.  Fixing  $m$ and $\dim_{\bC}F^p$, the set of all such filtration $F^\bullet$  is a complex flag manifold, which is denoted by $\cD^\vee$. It is a closed submanifold of a product of Grassmannians, and   is thus a projective manifold. The \emph{period domain}, denoted by $\cD$, is the subset of all complex polarized Hodge structures are charcterized by
	\begin{enumerate}[label=(\alph*)]
		\item $
		F^p=F^p\cap (F^{p+1})^\perp\oplus F^{p+1}$. 
		\item  $(-1)^pS$ is positive definite over $F^p\cap (F^{p+1})^\perp$.
	\end{enumerate}
	It is an open submanifold of $\cD^\vee$. Since the groups ${\rm GL}(V)$ and $\GL(V,S)$ act transitively on $\cD^\vee$ and $\cD$ respectively,  $\cD^\vee$ and $\cD$ are thus   homogeneous spaces.  We also use $F^\bullet$ to denote the $\bC$-Hodge structure. 
	
	For any Hodge structure $F^\bullet\in \cD^\vee$, the holomorphic tangent space $T_{F^\bullet}\cD^\vee$ of $\cD^\vee$  at $F^\bullet$ is isomorphic to  
	\begin{align*} 
		\End(V)/ \{A\in \End(V) \mid A(F^p)\subset F^p \    \mbox{for all}\  p \}.
	\end{align*}
	For any $A\in \End(V)$, we denote by $[A]_F$ its image in $T_{F^\bullet}\cD^\vee$.  
	A tangent vector $[A]_{F^\bullet}$  in $T_{{F^\bullet}}\cD^\vee$ is called \emph{horizontal} if $A(F^p)\subset F^{p-1}$ for all $p$. %The subbundle of $T_{\cD^\vee}$ consisting of 
	All horizontal vectors form a vector subbundle of $T{\cD^\vee}$, which we denote by $T^{h}{\cD^\vee}$. A holomorphic map $f:\Omega\to \cD^\vee$ from a complex manifold $\Omega$ is called \emph{horizontal} if $df: T\Omega\to f^*T{\cD^\vee}$ factors through $f^*T^{h}{\cD^\vee}$. 
	
	A \emph{$\bC$-variation of Hodge structure} ($\C$-VHS for short)  on a complex manifold $X$ is a family of polarized $\C$-Hodge structures on $X$ subject to a Griffiths transversality condition (see e.g., \cite{Simpson, SS22} for more details). 
	Given a $\C$-VHS with monodromy representation $\varrho:\pi_1(X)\to \GL(V,S)$, it  induces a $\varrho$-equivariant horizontal holomorphic map $\phi:\widetilde{X}^{\rm univ}\to \cD$, called the \emph{period map}. The image $\varrho(\pi_1(X))$ is called the \emph{monodromy group}.
	
	\subsection{Mixed Hodge structures and mixed period maps}\label{sec:mixed period}
	We recall the definition of $\bR$-mixed Hodge structures of weight length one and their mixed period maps. We refer the readers to \cite{Pea00,Her99,Car87} for more details. 
	
	%We describe classifying spaces $\cM$ for polarized $\mathbb{R}$-MHS of weight length 1.  Let $H$  be a real finite dimensional vector space.
	A  graded  polarized $\bR$-mixed Hodge structure  of length 1 is quadruple $(V_\bR,W_\bullet,F^\bullet,S_i)$ consisting of 
	\begin{itemize}
		\item A  real finite dimensional vector space $V_{\bR}$;
		\item  an increased (weight) filtration $\{0\}=W_{-2}\subset W_{-1}\subset W_0=V_{\bR}$;
		\item   a decreased (Hodge) filtration $F^\bullet $ of $V$, where $V:=V_{\bR}\otimes_\bR\bC$;
		\item  two  non-degenerate hermitian form $S_{-1}$ and $S_0$ on the graded quotients  ${\rm Gr}_{-1}^WV_\bR$ and ${\rm Gr}_0^WV_\bR$ of $V_\bR$  respectively,
	\end{itemize} 
	such that   $ {\rm Gr}_m^WV_\bR$ carries a pure $\bR$-Hodge structure of weight $m$ polarized by $S_m$. Here the Hodge filtration $F^\bullet{\rm Gr}_m^WV$ is given by    
	$$F^p{\rm Gr}_m^WV:=\frac{F^p\cap W_m\otimes \bC}{F^{p}\cap W_{m-1}\otimes\bC}.$$ 
	We fix $(V_\bR,W_\bullet,S_i)$.  After fixing  $\dim_{\bC}F^p{\rm Gr}_m^WV$, the \emph{mixed period domain} $\cM$ is the set of polarized $\bR$-mixed Hodge structures on $(V_\bR,W_\bullet)$, i.e. the set of decreasing
	filtrations  $F^\bullet $ such that  $(V_\bR,W_\bullet,F^\bullet,S_i)$  is a mixed Hodge structure.  %The set of mixed Hodge structures which satisfy only the first Riemann bilinear relations relative to the polarization, will be denoted $\cM^{\vee}$.  
	
	Given an $\R$-mixed Hodge structure  $(V_\bR,W_\bullet,F^\bullet,S_i)$ in $\cM$, %let $(F^\bullet\mathrm{Gr}_{-1}^WV,F^\bullet\mathrm{Gr}_0^WV)$ be the ordered set of pure $\bR$-Hodge structures defined on the graded quotients of $W_\bullet$. 
	since each $(\operatorname{Gr}_k^WV_\bR,F^\bullet\mathrm{Gr}_{k}^WV,S_k)$ is classified by a pure period domain $\cD_k$ for $k=-1,0$, the graded quotient   $ \mathrm{Gr}_\bullet^WV_\bR$ of $W_\bullet$ is then classified by a point of
	$$
	{\rm Gr}^W \cM:=\cD_{-1}\times \cD_0.
	$$ 
	Thus, we have a natural projection
	$
	\pi: \cM \to {\rm Gr}^W\cM,
	$ 
	which is a holomorphic map between complex manifolds.% and $\pi$ is a holomorphic map. 
	
	Write $V_{k,\bR}:=\operatorname{Gr}_k^WV_\bR$. Let $\GL(V_\bR)^W$ be the  real subgroup of $\GL(V_\bR)$ preserving the weight filtration $W_\bullet$.    Then we have a natural homomorphism
	$$
	q:\GL^W(V_\bR)\to \GL(V_{-1,\bR})\times  \GL(V_{0,\bR}).
	$$
	By \Cref{sec:period}, the subgroup $\GL(V_{k,\bR},S_{k})$ of $ \GL(V_{k,\bR})$ acts on $\cD_k$ transitively. Let $G$ be the inverse image of $\GL(V_{-1,\bR},S_{-1})\times \GL(V_{0,\bR},S_{0})$ under $q$, which is a real algebraic group. Then $G(\bR)$ acts on $\cM$ and  $\pi$ is $q$-equivariant. Note that the kernel $U$ of $G\to \GL(V_{-1,\bR},S_{-1})\times \GL(V_{0,\bR},S_{0})$ is a commutative real algebraic group isomorphic to a real vector space.
	
	In the same vein as \Cref{sec:period}, we can define the horizontal bundle $T_\cM^{h}$ on $\cM$, which is a holomorphic subbundle of the tangent bundle of $\cM$. 
	
	An $\bR$-variation of mixed Hodge structure ($\bR$-VMHS for short) on a complex manifold $X$ is a family of $\R$-mixed Hodge structures subject to certain conditions (see \cite{Pea00} for more details). 
	In particular, an $\bR$-VMHS determines a monodromy representation $\varrho:\pi_1(X)\to G(\bR)$ and a $\varrho$-equivariant holomorphic map $\phi:\widetilde{X}^{\rm univ}\to \cM$ such that $f$ is horizontal, i.e., $d\phi:T{\widetilde{X}}\to f^*T^{h}\cM$, and it satisfies the property that,  $\pi\circ \phi:\widetilde{X}^{\rm univ}\to {\rm Gr}^W\cM$ is a  $q\circ\varrho$-equivariant horizontal holomorphic map, which defines an $\bR$-VHS.   Such $\phi$ is called the \emph{mixed period map}  of this $\bR$-VMHS and $\varrho(\pi_1(X))$ is called the \emph{monodromy group}.

	%Since $\cD_i^\vee$ is a complex compact projective manifold and $\cD^i\subset \cD^i$ is an open subset, it follows that $\cM=\pi^{-1}({\rm Gr}^W \cM)$ is also an open subset of $\cM^\vee$.    We denote by $\pi_0:\cM\to {\rm Gr}^W\cM$  the restriction of $\pi$ on $\cM$.
	%The structure of $\cM$ can be described as follows:
	We recollect the following standard fact about the mixed period domain (cf. \cite[p. 218]{Car87}). 
	\begin{lemma}
		Let $\cM$ be as above. Then  $\pi:\cM\to {\rm Gr}^W\cM$ is a holomorphic vector bundle with the  fiber at a point $P\in {\rm Gr}^W\cM$  being canonically isomorphic to ${\rm Hom}(V_0,V_{-1})\otimes\bC /F^0{\rm Hom}(V_0,V_{-1})\otimes\bC $. Here we denote by $V_i:={\rm Gr}^W_iV$, and    ${\rm Hom}(V_0,V_{-1})$  is endowed with a natural Hodge structure of weight $-1$ induced from $P$.    For  the kernel $U$ of the homomorphism $G\to \GL(V_{-1,\bR},S_{-1})\times \GL(V_{0,\bR},S_{0})$,    $U(\bC)$ acts on the fibers of $\pi$ as a translation.  \qed
	\end{lemma}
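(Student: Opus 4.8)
The plan is to prove this standard fact (following Carlson's computation of extension data in mixed Hodge theory, \cite{Car87}) by pure linear algebra. Since the assertion is local over $\mathrm{Gr}^W\cM=\cD_{-1}\times\cD_0$, I would fix a point $P=(F^\bullet_{-1},F^\bullet_0)$, which endows $V_{-1}:=\mathrm{Gr}^W_{-1}V$ and $V_0:=\mathrm{Gr}^W_0V$ with pure Hodge structures of weights $-1$ and $0$; write $A^\bullet\subset V_{-1}$ and $B^\bullet\subset V_0$ for the associated flags, and recall $V_{-1}=W_{-1}\otimes\bC\subset V$ and $V_0=V/V_{-1}$. The first step is to identify the fiber $\pi^{-1}(P)$ with the set of decreasing filtrations $F^\bullet$ of $V$ satisfying $F^p\cap V_{-1}=A^p$ and $\im(F^p\to V_0)=B^p$ for all $p$: any such filtration automatically underlies an $\bR$-mixed Hodge structure, because the only weight-graded pieces are $V_{-1}$ and $V_0$, which already carry the prescribed pure structures. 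In particular $\pi$ is surjective, since the split filtration $A^\bullet\oplus B^\bullet$ always lies in $\pi^{-1}(P)$.

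Next I would analyze the $U(\bC)$-action on this fiber. Because $W_\bullet$ has length one, an element of $G(\bC)$ acting as the identity on $\mathrm{Gr}^W$ — that is, an element of $U(\bC)$ — is exactly $1+N$ with $N\in\End(V)$ annihilating $V_{-1}$ and with image inside $V_{-1}$; such $N$ is canonically an element of $\mathrm{Hom}(V_0,V_{-1})\otimes\bC$, satisfies $N^2=0$, and $(1+N)(1+N')=1+N+N'$, so $U(\bC)\cong\mathrm{Hom}(V_0,V_{-1})\otimes\bC$ additively, and $U(\bC)\subset\ker q$ gives, by $q$-equivariance of $\pi$, that $U(\bC)$ preserves $\pi^{-1}(P)$. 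The \textbf{key step is transitivity}: given $F^\bullet\in\pi^{-1}(P)$, lift a basis of $V_0$ adapted to the flag $B^\bullet$ into $F^\bullet$ to obtain a linear section $\sigma\colon V_0\to V$ of $V\to V_0$ with $\sigma(B^p)\subset F^p$ for all $p$, whence $F^p=A^p\oplus\sigma(B^p)$ by a dimension count; for a second $F'^\bullet\in\pi^{-1}(P)$ with section $\sigma'$, the endomorphism $N:=(\sigma'-\sigma)\circ{\rm pr}$, with ${\rm pr}\colon V\to V_0$, lies in $\mathrm{Hom}(V_0,V_{-1})\otimes\bC$ and satisfies $(1+N)F^\bullet=F'^\bullet$. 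Being abelian, $U(\bC)$ thus acts on $\pi^{-1}(P)$ by translation. Finally I compute the stabilizer of $F^\bullet$: one has $(1+N)F^\bullet=F^\bullet$ iff $N(F^p)\subset F^p$ for all $p$, and since $N$ factors through $V\to V_0$ and lands in $V_{-1}$ this is exactly $N(F^p_0)\subset F^p_{-1}$ for all $p$, i.e. $N\in F^0\mathrm{Hom}(V_0,V_{-1})\otimes\bC$ for the weight $-1$ Hodge structure on $\mathrm{Hom}(V_0,V_{-1})$ induced by $P$. Hence the orbit map identifies $\pi^{-1}(P)$ canonically with a torsor under $\mathrm{Hom}(V_0,V_{-1})\otimes\bC/F^0\mathrm{Hom}(V_0,V_{-1})\otimes\bC$, on which $U(\bC)$ acts through the quotient map by translations.

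For the holomorphic bundle structure, I would observe that over $\cD_{-1}\times\cD_0$ the tautological Hodge filtrations on $V_{-1}$ and $V_0$ vary holomorphically, so $F^0\mathrm{Hom}(V_0,V_{-1})\otimes\bC$ is a holomorphic subbundle of the trivial bundle with fiber $\mathrm{Hom}(V_0,V_{-1})\otimes\bC$, with holomorphic quotient bundle $\cE$; by the previous step $\pi\colon\cM\to\mathrm{Gr}^W\cM$ is precisely the affine bundle modeled on $\cE$, and fixing once and for all a real splitting $V_\bR=V_{-1,\bR}\oplus V_{0,\bR}$ of $W_\bullet$ produces a holomorphic section $P\mapsto A^\bullet\oplus B^\bullet$ (the split $\bR$-mixed Hodge structure), which identifies $\cM$ with the total space of $\cE$ and exhibits $\pi$ as a holomorphic vector bundle with $U(\bC)$ acting fiberwise by translation. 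I expect the only step needing genuine care to be the transitivity assertion; compatibility with the complex structures is routine once $\mathrm{Gr}^W\cM$ is viewed inside the ambient product of flag varieties. Alternatively, as the authors do, one may simply cite \cite{Car87}.
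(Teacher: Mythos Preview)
Your proof is correct and carefully written; it is precisely Carlson's computation of the extension class for a two-step mixed Hodge structure, spelled out in detail. The paper, however, gives no proof at all: the lemma is stated as a standard fact with a reference to \cite[p.~218]{Car87} and closed with a \qed. So rather than comparing two approaches, you have supplied what the authors deliberately omitted. Your final remark (``Alternatively, as the authors do, one may simply cite \cite{Car87}'') already captures the situation accurately.
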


	\section{Nearby cycle functor of a multivalued one-form}
	In the first part of this section, we define a vanishing cycle functor of a multivalued one-form and prove some useful properties of this functor. In the second part, we prove Theorem \ref{thm_main} in the case when $K$ is a field of positive characteristic. 
	\subsection{Definition of $\Phi_\eta$}
	Given an irreducible multivalued one-form $\eta$ on a complex manifold $X$ and an irreducible conic Lagrangian cycle $\Lambda$ in $T^*X$, we define two irreducible cycles in $T^* X\times \C$: $\Lambda^\diamond=\Lambda\times \C$ and $\Gamma_\eta^{\diamond}$ is the cycle such that its restriction to $T^*X \times \{s\}$ is equal to $\Gamma_{s\eta}$. Since $\Gamma_\eta^{\diamond}$ is a multisection of $T^*X\times \C$ as a vector bundle over $X\times \C$, it follows that $\Lambda^\diamond\times_{X\times \C} \Gamma_{\eta}^\diamond$ is an $(m+1)$-cycle in $(T^*X\times \C)\times_{X\times \C}(T^*X \times \C)$, where $m=\dim X$. Denote the fiberwise addition map by 
	\[
	\mathfrak{S}: (T^*X\times \C)\times_{X\times \C}(T^*X \times \C)\to T^*X \times \C.
	\]
	%Since the restriction of $\mathfrak{S}$ to the support of $\Lambda^\diamond\times_{X\times \C} \Gamma_{\eta}^\diamond$ is a proper morphism, the pushforward $\mathfrak{S}_*(\Lambda^\diamond\times_{X\times \C} \Gamma_{\eta}^\diamond)$ is a well-defined $(d+1)$-cycle in $T^*X\times \C$, which we denote by $\Lambda_\eta^\diamond$. Taking closure in $T^*X\times \bP^1$ and keeping the multiplicities, we have $\overline{\Lambda_\eta^\diamond}$, an $(m+1)$-cycle in $T^*X\times \bP^1$. Let $\Phi_\eta\Lambda$ be the restriction of $\overline{\Lambda_\eta^\diamond}$ to $T^*X\times \{\infty\}$. 
	Since the natural projection $\Gamma_\eta^\diamond\to X\times \C$ is a finite morphism, so is the restriction map
	\[
	\mathfrak{S}: (T^*X\times \C)\times_{X\times \C}\Gamma_\eta\to T^*X\times \C.
	\]
	Thus, the restriction of $\mathfrak{S}$ to the support of $\Lambda^\diamond\times_{X\times \C} \Gamma_{\eta}^\diamond$ is also a finite morphism. The pushforward $\mathfrak{S}_*(\Lambda^\diamond\times_{X\times \C} \Gamma_{s\eta}^\diamond)$ is a $(d+1)$-cycle in $T^*X\times \C$, which we also denote by $\Lambda_\eta^\diamond$. 
	\begin{lemma}\label{lemma_ana}
		As a subspace of $T^*X\times \bP^1$, $\Lambda_\eta^\diamond$ is locally closed with respect to the analytic Zariski topology.
	\end{lemma}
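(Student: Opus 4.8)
The plan is to work locally on $T^*X$ and show that $\Lambda_\eta^\diamond$ is an analytic subset of an open subset of $T^*X \times \bP^1$. First I would reduce to checking the statement over each fiber: it suffices to produce, for every point $(\xi, t) \in T^*X \times \bP^1$, an open neighbourhood $W$ in $T^*X\times \bP^1$ such that $\Lambda_\eta^\diamond \cap W$ is an analytic subvariety of $W$. The tautological identity $d\,T^*_Z X\cdot T^*_X X = \Phi_\eta(T^*_Z X)\cdot T^*_X X$ that the section is building toward means we only ever care about $\Lambda_\eta^\diamond$ as a cycle in finite pieces, so the local-closedness is really the only topological subtlety.

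The key geometric input is that $\mathfrak{S}$, restricted to the support of $\Lambda^\diamond \times_{X\times\C}\Gamma_\eta^\diamond$, is a finite morphism onto its image; this was established just above the statement. A finite morphism of complex analytic spaces is proper and closed, and the image of an analytic set under a finite morphism is analytic (by the finite mapping theorem / Remmert's theorem for the proper case, combined with finiteness of fibers). Thus $\Lambda_\eta^\diamond = \mathfrak{S}_*(\Lambda^\diamond \times_{X\times\C}\Gamma_\eta^\diamond)$, viewed over $T^*X\times\C$, is a closed analytic subset of $T^*X\times\C$. The cycle structure (the multiplicities $n_i$) does not affect the support, so as a set $\Lambda_\eta^\diamond \cap (T^*X\times\C)$ is analytic there. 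Since $T^*X\times\C$ is open in $T^*X\times\bP^1$, this already shows $\Lambda_\eta^\diamond$ is locally closed in $T^*X\times\bP^1$: it is the intersection of the open set $T^*X\times\C$ with the closed analytic (hence, in particular, closed) subset of that open set.

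The step I expect to be the main obstacle is making the passage from "cycle" to "support" and back fully rigorous, i.e.\ confirming that the pushforward $\mathfrak{S}_*$ of an analytic cycle along a finite morphism has support exactly equal to the (analytic, by the finite mapping theorem) image of the support of the source cycle, with no collapsing or cancellation of components — here one uses that $\Gamma_\eta^\diamond \to X\times\C$ is finite \emph{surjective} on each component and that the fiberwise-addition map is injective on each component of the fiber product lying over the unbranching locus, so that the pushforward is genuinely a $(d+1)$-cycle whose support is the full image. Once that bookkeeping is in place, the conclusion is immediate from the Remmert finite mapping theorem applied to $\mathfrak{S}|_{\mathrm{supp}}$: the image is a closed analytic subset of $T^*X\times\C$, and intersecting with the open chart $T^*X\times\C\subset T^*X\times\bP^1$ exhibits $\Lambda_\eta^\diamond$ as locally closed in the analytic Zariski topology. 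I would also remark that the same argument shows $\Lambda_\eta^\diamond$ is conic in the $T^*X$-direction and finite over $X\times\C$, which is what is needed downstream for defining $\Phi_\eta$.
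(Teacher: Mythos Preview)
Your argument establishes that $\Lambda_\eta^\diamond$ is a closed analytic subset of $T^*X\times\C$, and you then conclude by noting that $T^*X\times\C$ is Zariski open in $T^*X\times\bP^1$. This last step contains a genuine gap: in the analytic category, being closed analytic in a Zariski open set does \emph{not} imply being locally closed in the analytic Zariski topology of the ambient space. The issue is that the topological closure $\overline{\Lambda_\eta^\diamond}$ in $T^*X\times\bP^1$ need not be an analytic set. A standard counterexample is $A=\{z=e^{1/w}\}\subset\{w\neq 0\}\subset\C^2$: this is closed analytic in the Zariski open $\{w\neq 0\}$, but its closure in $\C^2$ is $A\cup\{w=0\}$, which is not analytic near any point of $\{w=0\}$, and the smallest analytic set in $\C^2$ containing $A$ is all of $\C^2$. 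So $A$ is not locally closed for the analytic Zariski topology, despite being closed in a Zariski open. Your Remmert/finite-mapping input is correct and is implicitly used in the paper as well, but by itself it only gets you to the analogue of $A$ above.

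The paper closes this gap by exploiting an additional structure you mention only in passing at the end: $\Lambda_\eta^\diamond$ is conic for the $\C^*$-action on $T^*X\times\C$ viewed as a vector bundle over $X$ (scaling both factors). A conic closed analytic subset of a vector bundle $V$ is cut out fibrewise by homogeneous equations, and therefore its closure in the projective completion $\bP(V\oplus\C_X)$ is genuinely analytic. The paper then transfers this to $T^*X\times\bP^1$ by comparing the two natural partial compactifications $\bP((T^*X\times\C)\oplus\C_X)$ and $\bP(T^*X\oplus\C_X)\times_X(X\times\bP^1)$, which are birational and agree on the common open $T^*X\times\C$ containing $\Lambda_\eta^\diamond$. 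Note that the whole point of the lemma is to justify the sentence immediately following it, namely that $\overline{\Lambda_\eta^\diamond}$ is an analytic $(d+1)$-cycle in $T^*X\times\bP^1$; without controlling the closure, the definition of $\Phi_\eta\Lambda$ as the fibre over $\infty$ would not make sense as an analytic cycle.
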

	\begin{proof}
		If we consider $T^*X\times \C$ as a vector bundle over $X$, then it follows from definition that $\Lambda_\eta^\diamond$ is a conic cycle. Consider the fiberwise projective compactification 
		\[
		T^*X\times \C\subset \bP\big((T^*X\times \C)\oplus \C_{X}\big).
		\]
		Since $\Lambda_\eta^\diamond$ is conic, it is locally closed in $\bP\big((T^*X\times \C)\oplus \C_{X}\big)$ with respect to the analytic Zariski topology.
		
		Now, consider the fiberwise projective compactifications
		\[
		T^*X\subset \bP(T^*X\oplus \C_X), \quad\text{and}\quad X\times \C=\C_X\subset \bP(\C_X\oplus \C_X)=X\times \bP^1.
		\]
		As partial compactifications of $T^*X\times \C$, $\bP\big((T^*X\times \C)\oplus \C_{X}\big)$ and $\bP(T^*X\oplus \C_X)\times_{X} (X\times \bP^1)$ are birational to each other. In fact, one can easily construct blowup and blowdown maps connecting them. Since $\Lambda_\eta^\diamond$ is locally closed in the first partial compactification, it is also a locally closed in the second.
	\end{proof}
	
	Taking closure in $T^*X\times \bP^1$ and keeping the multiplicities, we have $\overline{\Lambda_\eta^\diamond}$, a $(d+1)$ cycle in $T^*X\times \bP^1$. Let $\Phi_\eta\Lambda$ be the restriction of $\overline{\Lambda_\eta^\diamond}$ to $T^*X\times \{\infty\}$. 
	
	\begin{remark}\label{remark1}
		The $(m+1)$-cycle $\Lambda^\diamond\times_{X\times \C} \Gamma_{\eta}^\diamond$ may have higher multiplicities. In fact, it is defined as the scheme-theoretic intersection of $\Lambda^\diamond\times \Gamma_{\eta}^\diamond$ and the preimage of the diagonal of $(X\times \C)\times (X\times \C)$ under the map $(T^*X\times \C)\times (T^*X\times \C)\to (X\times \C)\times (X\times \C)$. It is straightforward to check that the intersection has expected dimension, but may have higher multiplicities (see \cite[Section 7.1]{Fulton}). 
	\end{remark}

	\begin{remark}\label{remark_def}
		When $\eta$ is a single-valued one-form, $\Gamma_\eta$ is a section of $T^* X$, and $\Phi_\eta\Lambda$ is simply MacPherson's description of the deformation to normal cone (see \cite[Remark 5.1.1]{Fulton}). 
	\end{remark}
	\begin{lemma}\label{lemma_Z}
		Let $Z$ be a closed complex submanifold of $X$, and denote $T^*_ZX$ by $\Lambda$. Let $\eta$ be a multivalued one-form on $X$, and let $\eta|_Z$ be the restriction of $\eta$ to $Z$ as in \cref{remark_restriction}. Then
		\begin{equation}\label{eq_Z}
			\Phi_{\eta}(\Lambda)=u^*\left(\Phi_{\eta|_Z}T^*_Z Z\right)
		\end{equation}
		where $u: T^*X|_Z\to T^*Z$ is the pullback map and $u^*$ is the flat pullback on analytic cycles. 
	\end{lemma}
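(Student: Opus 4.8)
The plan is to unravel the definition of $\Phi_\eta$ and show that, once the whole construction is restricted to lie over $Z$, it becomes the flat pullback along $u$ of the analogous construction on $T^*Z$; then \eqref{eq_Z} follows by taking Zariski closures in $T^*X\times\bP^1$ and restricting to $\{\infty\}$, both of which commute with flat pullback. All the operations involved respect the linear structure on cycles (as do $u_*$, $u^*$ and $\eta\mapsto\eta|_Z$), so the argument below applies directly to a general, not necessarily irreducible, multivalued one-form $\eta$.

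Write $N:=T^*_ZX$; as $Z$ is smooth, $N$ is the conormal bundle of $Z$ in $X$, a subbundle of $T^*X|_Z$ lying over $Z$ and equal to the kernel of the vector-bundle surjection $u\colon T^*X|_Z\to T^*Z$. Since $\Lambda^\diamond=N\times\C$ maps to $X\times\C$ through $Z\times\C$, transitivity of fibre products (at the level of the cycles of \cref{remark1}) gives
\[
\Lambda^\diamond\times_{X\times\C}\Gamma_\eta^\diamond=\Lambda^\diamond\times_{Z\times\C}\Gamma,\qquad \Gamma:=\Gamma_\eta^\diamond\times_{X\times\C}(Z\times\C),
\]
where $\Gamma$ is a cycle on $(T^*X|_Z)\times\C$, finite over $Z\times\C$, whose slice over $s\in\C$ is $\Gamma_{s\eta}\times_XZ$. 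In particular $\Lambda_\eta^\diamond$, hence $\Phi_\eta(\Lambda)$, are supported in $T^*X|_Z$, so both sides of \eqref{eq_Z} are cycles there. Because fibrewise scaling by $s$ commutes with the linear map $u$, the definition of the restricted one-form (\cref{remark_restriction}) gives $(u\times\id)_*\Gamma=\Gamma_{\eta|_Z}^\diamond$; and running the $\Phi$-construction for the zero section $T^*_ZZ$ of $T^*Z$ gives tautologically $(T^*_ZZ)_{\eta|_Z}^\diamond=\Gamma_{\eta|_Z}^\diamond$, since a zero section is an isomorphism onto its base.

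The crux is the identity of cycles on $(T^*X|_Z)\times\C$
\[
\mathfrak{S}_*\big(\Lambda^\diamond\times_{Z\times\C}\Gamma\big)=(u\times\id)^*\,(u\times\id)_*\,\Gamma ,
\]
where $\mathfrak{S}$, restricted to $\Lambda^\diamond\times_{Z\times\C}\big((T^*X|_Z)\times\C\big)$, is precisely the fibrewise translation action of the group bundle $N$. Set-theoretically both sides are the $N$-saturation of $\Gamma$ — the union of those $u$-fibres meeting $\Gamma$ — so the real content is the equality of multiplicities, which one verifies after choosing, locally over $Z$ (enough for an identity of cycles on a bundle over $Z$), a splitting $T^*X|_Z\cong N\oplus u^*T^*Z$. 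With such a splitting $u$ is the projection, $\mathfrak{S}$ restricted to $N\times_{Z\times\C}(\,\cdot\,)$ is translation in the $N$-summand, and both sides visibly sweep out the same fibres of $u$; for each irreducible component $\Gamma_j$ of $\Gamma$ the restriction of $\mathfrak{S}$ to $\Lambda^\diamond\times_{Z\times\C}\Gamma_j$ is, up to a translation twist that does not affect degrees, $\id_N$ times the map $\Gamma_j\to(u\times\id)(\Gamma_j)$, hence has the same degree, while the multiplicity produced by $(u\times\id)_*$ is preserved under the smooth pullback $(u\times\id)^*$. Combining this with the two identifications above gives $\Lambda_\eta^\diamond=(u\times\id)^*\big((T^*_ZZ)_{\eta|_Z}^\diamond\big)$.

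Finally, $u\times\id\colon (T^*X|_Z)\times\bP^1\to(T^*Z)\times\bP^1$ is a vector-bundle projection, hence flat and open with irreducible fibres, so flat pullback along it commutes with taking Zariski closures; thus $\overline{\Lambda_\eta^\diamond}=(u\times\id)^*\big(\overline{(T^*_ZZ)_{\eta|_Z}^\diamond}\big)$ in $(T^*X|_Z)\times\bP^1$. No component of $\overline{(T^*_ZZ)_{\eta|_Z}^\diamond}$ lies over $\infty$ (each dominates $\bP^1$), so its restriction to the Cartier divisor $T^*Z\times\{\infty\}$ is a proper intersection, and since the pullback of that divisor is $T^*X|_Z\times\{\infty\}$ this intersection commutes with $(u\times\id)^*$. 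Restricting the displayed closure identity to $\{\infty\}$ therefore yields $\Phi_\eta(\Lambda)=u^*\big(\Phi_{\eta|_Z}(T^*_ZZ)\big)$, which is \eqref{eq_Z}. The one genuinely non-formal step is the multiplicity bookkeeping in the second displayed identity; the remaining steps — commuting flat pullback with closures and with intersection against the divisor $\{\infty\}$ — are standard intersection theory.
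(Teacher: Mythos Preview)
Your proof is correct and follows essentially the same route as the paper: establish $\Lambda_\eta^\diamond=(u\times\id)^*\Gamma_{\eta|_Z}^\diamond$ on $(T^*X|_Z)\times\C$, then use flatness of $u\times\id$ to commute pullback with closure in $(T^*X|_Z)\times\bP^1$ and with restriction to $\{\infty\}$. The paper asserts the first identity directly ``from definition'' and the second from flatness of $u_1$, whereas you supply the multiplicity bookkeeping via a local splitting $T^*X|_Z\cong N\oplus u^*T^*Z$ and spell out why closures and the Cartier-divisor intersection behave well under flat pullback; these are welcome elaborations of the same argument rather than a different approach.
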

	\begin{proof}
		Let $u_0: T^*X|_Z\times \C\to T^*Z\times \C$ and $u_{1}: T^*X|_Z\times \bP^1\to T^*Z\times \bP^1$ be the product of $u$ and the identity maps on $\C$ and $\bP^1$ respectively. Then it follows from definition that $\overline{\Lambda_{\eta}^\diamond}$ is contained in $T^*X|_Z\times \bP^1$, and as analytic cycles on $T^*X|_Z\times \C$ and $T^*X|_Z\times \bP^1$ respectively, we have
		\begin{equation}\label{eq_u}
			u_0^*\left({\Gamma_{\eta|_Z}^\diamond}\right)={\Lambda_{\eta}^\diamond}\quad 
			\text{and}\quad u_1^*\left(\overline{\Gamma_{\eta|_Z}^\diamond}\right)=\overline{\Lambda_{\eta}^\diamond}.
		\end{equation}
		By the flatness of $u_1$ and the definition of $\Phi_\eta$, we have 
		\[
		u_1^*\left(\overline{\Gamma_{\eta|_Z}^\diamond}\cap T^*Z\times \{\infty\}\right)=\overline{\Lambda_{\eta}^\diamond}\cap T^*X|_Z\times\{\infty\}=\Phi_\eta(\Lambda).
		\]
		On the other hand, if we let $\Lambda_Z=T^*_Z Z$, then by definition 
		\[
		\Phi_{\eta|_Z}(\Lambda_Z)=\overline{\Lambda_{Z, \eta|_{Z}}^\diamond}\cap T^*Z\times \{\infty\} \quad\text{and}\quad \overline{\Lambda_{Z, \eta|_{Z}}^\diamond}=\overline{\Gamma_{\eta|_Z}^\diamond}. 
		\]
		Thus, the desired equality \eqref{eq_Z} follows.
	\end{proof}
	\begin{proposition}\label{prop_Lagrangian}
		Given an irreducible conic Lagrangian cycle $\Lambda=T^*_Z X$, $\Phi_\eta \Lambda$ is an effective conic cycle supported in $T^*X|_Z$. Moreover, if $\eta$ is closed, then $\Phi_\eta \Lambda$ is also Lagrangian. 
	\end{proposition}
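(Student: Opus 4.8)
The plan is to reduce to the two model situations already handled in the preceding lemmas and remarks, using the submanifold reduction of \cref{lemma_Z} together with the classical picture of deformation to the normal cone recalled in \cref{remark_def}. First I would record that effectivity and the support statement are essentially formal: by construction $\Lambda_\eta^\diamond$ is an effective cycle (it is a pushforward, under a finite morphism, of an effective scheme-theoretic intersection, cf. \cref{remark1}), its closure $\overline{\Lambda_\eta^\diamond}$ in $T^*X\times\bP^1$ is effective, and the restriction $\Phi_\eta\Lambda$ to the Cartier divisor $T^*X\times\{\infty\}$ is therefore effective. For the support: every component of $\Lambda^\diamond\times_{X\times\C}\Gamma_\eta^\diamond$ lies over $Z\times\C$ because $\Lambda^\diamond=T^*_ZX\times\C$ does, fiberwise addition does not change the base point, so $\Lambda_\eta^\diamond\subset T^*X|_Z\times\C$; taking closure keeps this, and restricting to $\infty$ gives $\Phi_\eta\Lambda\subset T^*X|_Z\times\{\infty\}\cong T^*X|_Z$. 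Conicity is inherited the same way, since $s\eta$ scales correctly and the whole construction is compatible with the $\C^*$-action on the fibers of $T^*X\to X$, as already noted in the proof of \cref{lemma_ana}.

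The substantive point is that $\Phi_\eta\Lambda$ is Lagrangian when $\eta$ is closed, and here I would argue in two stages. By \cref{lemma_Z} we have $\Phi_\eta(\Lambda)=u^*\big(\Phi_{\eta|_Z}T^*_ZZ\big)$ where $u:T^*X|_Z\to T^*Z$ is the canonical pullback; since $u$ is a flat morphism whose fibers are affine spaces (namely the conormal directions $(T^*X/T^*Z)|_Z$) and since $T^*X|_Z\to T^*Z$ carries the tautological symplectic-data compatibly, a cycle in $T^*X|_Z$ of the form $u^*\Xi$ is conic Lagrangian in $T^*X$ if and only if $\Xi$ is conic Lagrangian in $T^*Z$. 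Thus it suffices to prove the Lagrangian property for $\Phi_{\eta|_Z}(T^*_ZZ)$, i.e. I may assume $Z=X$ and $\Lambda=T^*_XX$ is the zero section. Here $\eta|_X$ is a closed multivalued one-form on $X$; I would then pass to the spectral covering $f:Y\to X$ of \cref{def:mutivalued}-style constructions (as in the proof of \cref{lemma_closed}, or invoking \cref{thm:KE}), over which $\eta$ becomes a finite collection of genuine closed holomorphic one-forms $\omega_1,\dots,\omega_m$. For a single closed one-form $\omega$ on a manifold $Y$, $\Phi_\omega(T^*_YY)$ is, by \cref{remark_def}, MacPherson's deformation to the normal cone of the zero section along the section $\Gamma_\omega$; because $\omega$ is closed its graph $\Gamma_\omega$ is a Lagrangian section, the family $\{\Gamma_{s\omega}\}_{s\in\C}$ is a family of Lagrangians degenerating as $s\to\infty$, and the flat limit of a family of Lagrangian cycles in a symplectic manifold is Lagrangian (each component has the right dimension $m$ by the flatness/dimension count, and the symplectic form vanishes on it by continuity — the restriction of $\omega_{T^*Y}$ to the limit cycle is the limit of its restriction to $\Gamma_{s\omega}$, which is identically zero since $d\omega=0$). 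Summing over the $\omega_i$, $\Phi_\eta(\eta\text{-lift of }T^*_YY)$ is Lagrangian on $Y$, and pushing forward along the finite étale-in-codimension-zero map $f$ (which is symplectic away from the branch locus, and branch loci have codimension $\ge 1$ so cannot support a Lagrangian component) yields that $\Phi_{\eta|_X}(T^*_XX)$ is Lagrangian on $X$.

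The main obstacle I anticipate is making the "flat limit of Lagrangians is Lagrangian" step rigorous with multiplicities and at the branch locus of $\eta$: one must check that no spurious components appear over the branching locus $X\setminus X^\circ$ and that the multiplicities produced by $\mathfrak{S}_*$ and by the closure operation do not destroy the Lagrangian (in particular, pure-dimensional and conic) structure. I would handle this by working over the unbranching locus $X^\circ$ first — where \cref{thm:KE}'s local description gives honest one-forms and the argument above applies verbatim — obtaining that $\Phi_\eta\Lambda|_{T^*X^\circ\times\{\infty\}}$ is conic Lagrangian, hence its closure in $T^*X$ is conic Lagrangian; then a dimension count shows the part of $\Phi_\eta\Lambda$ supported over $X\setminus X^\circ$ is also at most $m$-dimensional and conic, and conicity plus the fact that $\Phi_\eta\Lambda$ is a limit of Lagrangian cycles forces it to be Lagrangian (a conic cycle of dimension $m$ in $T^*X$ on which $\theta$, hence $d\theta$, restricts to zero is a union of conormal varieties). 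Alternatively one can bypass the branch locus entirely by invoking \cref{lemma_local}-style locality together with \cref{lemma_Z} on a Whitney stratification adapted to both $Z$ and the branching locus, reducing every local piece to the zero-section case on a smaller manifold; but I expect the spectral-covering route to be the cleanest since it linearizes $\eta$ globally.
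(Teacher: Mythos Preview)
Your effectivity, support, and conicity arguments are fine and match the paper's. The gap is in your reduction step for the Lagrangian property: you invoke \cref{lemma_Z} to write $\Phi_\eta(\Lambda)=u^*\big(\Phi_{\eta|_Z}T^*_ZZ\big)$ and thereby reduce to the case $Z=X$, but \cref{lemma_Z} is stated and proved only for $Z$ a closed complex \emph{submanifold} of $X$. For singular $Z$ there is no map $u:T^*X|_Z\to T^*Z$ and the formula makes no sense. Your later discussion of the branching locus of $\eta$ does not address this, because once you have (illegitimately) reduced to $Z=X$ the singular locus of the original $Z$ has disappeared from view. At $s=\infty$ the components of $\Phi_\eta\Lambda$ need not dominate $Z$ --- that is precisely the point of the functor --- so restricting to $Z_{\reg}$ at the level of $\Phi_\eta\Lambda$ itself can lose entire components.

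The paper's proof uses the same two ingredients (the formula from \cref{lemma_Z} and ``limit of Lagrangians is Lagrangian'') but in the opposite order, and this is what makes it work. One shows that $\Lambda_{\eta,s}:=\Lambda_\eta^\diamond\cap(T^*X\times\{s\})$ is Lagrangian for each \emph{finite} $s$, and then passes to the limit $s\to\infty$. The key observation is that for finite $s$ the fibers of $\Lambda_{\eta,s}\to Z$ and of $\Lambda\to Z$ have the same dimension (since $\Gamma_\eta\to X$ is finite), so by a dimension count every irreducible component of $\Lambda_{\eta,s}$ dominates $Z$. Hence it suffices to check Lagrangianity over $Z_{\reg}$, where \cref{lemma_Z} does apply and gives $\Lambda_{\eta,s}|_{Z_{\reg}}=u^*(\Gamma_{s\eta|_{Z_{\reg}}})$; over the unbranching locus this is a union of graphs of closed one-forms, hence Lagrangian. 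The spectral-covering detour is unnecessary: working locally over the unbranching locus of $\eta|_{Z_{\reg}}$ already linearizes $\eta$.
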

	\begin{proof}
		By definition, $\Phi_\eta \Lambda$ is effective. %Consider the $\C^*$-action on $T^*X\times \C$ by fiberwise multiplication on $T^*X$ and scalar multiplication on $\C$. This action preserves both $\Lambda^\diamond$ and $\Gamma_\eta^\diamond$, and hence it induces a $\C^*$-action on $\Lambda_\eta^\diamond$. 
		Since the $\C^*$-action on $T^*X\times \C$ induced by the vector bundle structure (as in the proof of Lemma \ref{lemma_ana}) extends to $T^*X\times \bP^1$, the $\C^*$-action on $\Lambda_\eta^\diamond$ extends to the closure $\overline{\Lambda_\eta^\diamond}$. Thus, the restriction $\overline{\Lambda_\eta^\diamond}$ to $T^*X \times \{\infty\}$ is a conic cycle. 
		
		Since $\Phi_\eta \Lambda$ is the limit of $\Lambda_\eta^\diamond\cap (T^* X\times \{s\})$ as $s$ approaches to $\infty$, and since the limit of Lagrangian cycles is also Lagrangian, it suffices to show that $\Lambda_{\eta, s}\coloneqq\Lambda_\eta^\diamond\cap (T^* X\times \{s\})$ 
		is Lagrangian for any $s\in \C$. Since $\Lambda^\diamond_\eta$ is irreducible and of dimension $m+1$, $\Lambda_{\eta, s}$ is pure of dimension $m$. Here, we recall that $m$ is the dimension of $X$.
		
		By definition, identifying $T^* X\times \{s\}$ with $T^* X$, we have (not counting multiplicity)
		\[
		\Lambda_{\eta, s}=\big\{(x, \zeta)\mid \zeta\in \Lambda|_x+s\cdot \Gamma_\eta|_x\big\},
		\]
		where 
		\[
		\Lambda|_x+s\cdot \Gamma_\eta|_x=\big\{\alpha+s\beta\mid \alpha\in \Lambda\cap T^*_x X \;\text{and}\; \beta \in \Gamma_\eta\cap T^*_x X \big\}.
		\]
		Since the projection $\Gamma_\eta\to X$ is a finite morphism, over every point of $Z$, the fibers of the two projections $\Lambda=T^*_Z X\to Z$ and $\Lambda_{\eta, s}\to Z$ have the same dimension. Since $\Lambda$ is irreducible and $\dim \Lambda=\dim \Lambda_{\eta, s}=m$, by counting dimensions, the restriction of $\Lambda_{\eta, s}\to Z$ to every irreducible component of $\Lambda_{\eta, s}$ is dominant. 
		Thus, it suffices to show that $\Lambda_{\eta, s}$ is Lagrangian over a dense open subset of $Z$. By the first equation in \eqref{eq_u}, we have
		\[
		\Lambda_{\eta, s}|_{Z_{\reg}}=u^*\big(\Gamma_{s\eta|_{Z_{\reg}}}\big)
		\]
		where $u: T^*X|_{Z_{\reg}}\to T^*Z_{\reg}$ is the pullback map. Since locally, we can realize $X$ as the product of $Z_{\reg}$ and another complex manifold, $u^*(\Gamma_{s\eta|_{Z_{\reg}}})$ is Lagrangian in $T^*X$ if and only if $\Gamma_{s\eta|_{Z_{\reg}}}$ is Lagrangian in $T^*Z_{\reg}$. Locally over the unbranching locus of $s\eta|_{Z_{\reg}}$, $\Gamma_{s\eta|_{Z_{\reg}}}$ is a union of closed holomorphic one-forms. It is a well-known fact that the image of a closed one-form in the cotangent bundle is a Lagrangian submanifold. Since $\eta$ is a closed multivalued one-form, so is $s\eta|_{Z_{\reg}}$. Hence, over the unbraching locus, $\Gamma_{s\eta|_{Z_{\reg}}}$ is a Lagrangian submanifold of $T^*Z_{\reg}$, and we have finished the proof. 
	\end{proof}
	%	\begin{lemma}
		%		Let $X$ be a complex manifold, and $\eta$ be a closed holomorphic one-form on $X$. As a submanifold of $T*X$, $\Gamma_\eta$ is Lagrangian. 
		%	\end{lemma}
	%	\begin{proof}
		%		There is a tautological one-form $\theta$ on $T^*X$ such that the symplectic form $\omega$ on $T^*X$ is defined by $\omega=d\theta$. Considering $\eta$ as a section of $T^*X$, we have a natural map $i: X\to \Gamma_\eta$. By the definition of $\theta$, $i^*(\omega|_{\Gamma_\eta})=d\eta=0$. Thus, $\omega|_{\Gamma_\eta}=0$, that is, $\Gamma_\eta$ is Lagrangian. 
		%By definition, $\mathfrak{T}_\eta^*(\theta)=\theta+\pi^*{\eta}$, where $\pi: T^*X\to X$ is the bundle map. Thus, 
		%		\[
		%		\mathfrak{T}_\eta^*(\omega)=\mathfrak{T}_\eta^*(d\theta)=d\mathfrak{T}_\eta^*(\theta)=d(\theta+\pi^*{\eta})=d\theta=\omega
		%		\]
		%		where the second last equality follows from $\eta$ being closed. 
		%	\end{proof}
	
	The following Proposition is analogous to the fact that the vanishing cycle of $f$ is supported on the critical locus of $f$. 
	\begin{proposition}\label{prop_L}
		Let $X$ be a complex manifold and let $Z$ be a compact irreducible analytic subvariety of $X$. Denote by $\Lambda=T^*_Z X$. If $\eta$ is a $d$-valued closed one-form on $X$, then
		\begin{equation}\label{eq_Phi}
			\Phi_\eta(\Lambda)=n_0\Lambda+\sum_{1\leq i\leq m} n_i \;T^*_{Z_i} X
		\end{equation}
		where $n_0$ is the multiplicity of the zero form in $\eta|_{Z_{\reg}}$ and all $Z_i$ are proper closed subvarieties of $Z$. In particular, if $\eta|_{Z_{\reg}}$ is non-trivial, then $n_0<d$. 
	\end{proposition}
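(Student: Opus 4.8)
The plan is to reduce the global statement to a purely local computation near the smooth locus $Z_{\reg}$, exploiting that $\Phi_\eta$ is defined by a deformation-to-normal-cone construction and that characteristic cycles are determined locally (cf. \cref{lemma_local}). First I would observe that by \cref{prop_Lagrangian}, $\Phi_\eta(\Lambda)$ is an effective conic Lagrangian cycle supported in $T^*X|_Z$, hence of the form $\sum_j m_j T^*_{W_j}X$ with $W_j\subseteq Z$ irreducible and $m_j\in\Z_{>0}$. Since $\dim Z$ is the largest possible dimension among conic Lagrangians supported in $T^*X|_Z$ whose base lies in $Z$, the only way a component $T^*_{W_j}X$ with $\dim W_j=\dim Z$ can occur is $W_j=Z$; write $\Phi_\eta(\Lambda)=n_0\Lambda+\sum_i n_i T^*_{Z_i}X$ with all $Z_i\subsetneq Z$ proper. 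It remains to identify the coefficient $n_0$ and to bound it.

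For the identification of $n_0$, I would work generically along $Z$: by \cref{lemma_Z}, $\Phi_\eta(\Lambda)=u^*\bigl(\Phi_{\eta|_Z}T^*_Z Z\bigr)$ where $u:T^*X|_Z\to T^*Z$ is the (flat) pullback, so the multiplicity of $\Lambda=T^*_Z X$ in $\Phi_\eta(\Lambda)$ equals the multiplicity of the zero section $T^*_Z Z$ in $\Phi_{\eta|_Z}(T^*_Z Z)$. Thus we are reduced to a single-valued/multi-valued computation on $Z$ itself (or really on $Z_{\reg}$, since $Z_i\subsetneq Z$ and the generic behavior is what matters): compute the limit as $s\to\infty$ of $\Gamma_{s(\eta|_{Z_{\reg}})}$ inside $T^*Z_{\reg}\times\bP^1$ and read off the multiplicity of the zero section. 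Over the unbranching locus of $\eta|_{Z_{\reg}}$, the cycle $\Gamma_{s\eta}$ is a disjoint union, with multiplicities, of graphs $s\eta_1,\dots,s\eta_k$ (where $\sum \text{(mult)}\cdot\deg = d$), and the deformation $\overline{\bigcup_j n_j\,\Gamma_{s\eta_j}^\diamond}\cap\{s=\infty\}$ picks up the zero section exactly once for each branch $\eta_j$ that is identically zero on $Z_{\reg}$, with multiplicity equal to that branch's multiplicity-times-degree contribution. Hence $n_0=\sum_{j:\,\eta_j\equiv 0} n_j\deg[\Gamma_j:Z]$, which is precisely "the multiplicity of the zero form in $\eta|_{Z_{\reg}}$" in the sense of \cref{def:mutivalued}, and $n_0\le d$ with equality iff every branch of $\eta|_{Z_{\reg}}$ is the zero form, i.e. iff $\eta|_{Z_{\reg}}$ is trivial. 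In particular if $\eta|_{Z_{\reg}}$ is non-trivial then $n_0<d$.

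The main obstacle I anticipate is the bookkeeping of multiplicities in the deformation to normal cone when $\eta$ is genuinely multi-valued and the branching locus meets $Z$: a priori the scheme-theoretic fiber product $\Lambda^\diamond\times_{X\times\C}\Gamma_\eta^\diamond$ can carry embedded or higher multiplicities (\cref{remark1}), and one must be careful that taking the closure in $T^*X\times\bP^1$ and restricting to $s=\infty$ commutes with the generic computation along $Z_{\reg}$. I would handle this by noting that both $\Phi_\eta(\Lambda)$ and the predicted right-hand side of \eqref{eq_Phi} are conic Lagrangian cycles, so each is determined by its restriction over a dense Zariski-open subset of its support; choosing that open subset inside $Z_{\reg}\setminus(\text{branching locus of }\eta|_{Z_{\reg}})$ and applying \cref{lemma_Z} there, the computation collapses to the single-valued case handled by MacPherson's deformation to the normal cone (\cref{remark_def}), where the multiplicity of the zero section is classical. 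The flatness of $u_1$ in \cref{lemma_Z}, together with \cref{lemma_ana} ensuring $\Lambda_\eta^\diamond$ is locally closed and hence has a well-defined closure, is what makes this reduction rigorous.
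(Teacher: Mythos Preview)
Your proposal is correct and follows essentially the same approach as the paper's proof: use \cref{prop_Lagrangian} to obtain the decomposition as a sum of conic Lagrangian cycles supported over subvarieties of $Z$, then invoke \cref{lemma_Z} to reduce the identification of $n_0$ to the case $Z=X$, $\Lambda=T^*_X X$, and finally compute locally on the unbranching locus where $\eta$ splits into $d$ single-valued branches. Your discussion of the multiplicity bookkeeping and the need to work over $Z_{\reg}\setminus(\text{branching locus})$ is, if anything, more explicit than the paper's rather terse treatment of the same point.
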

	\begin{proof}%[Proof of Proposition \ref{prop_L}]
		It follows from definition that $\Phi_{\eta}\subset T^*X|_{Z}$. Thus, by Proposition \ref{prop_Lagrangian}, we have the presentation \eqref{eq_Phi} with $n_i\geq 0$ for all $i$. The only remaining statement is that $n_0$ is equal to the multiplicity of the zero form in $\eta|_{Z_{\reg}}$. By \cref{lemma_Z}, it suffices to prove the statement in the case when $Z=X$ and $\Lambda=T^*_X X$. Restricting to a small ball in the unbranching locus of $\eta$, we can also assume that $\eta$ is the union of $d$ single-valued one-forms $\eta_j, 1\leq j\leq d$. In this case, 
		\[
		\Phi_\eta(T^*_X X)=\sum_{1\leq j\leq d}\Phi_{\eta_i}(T^*_X X).
		\]
		Obviously, if $\eta_j=0$, then $\Phi_{\eta_j}(T^*_X X)=T^*_X X$. Otherwise, $\Phi_{\eta_j}(T^*_X X)$ is supported on a proper closed subset of $X$. Thus, $n_0$ is equal to the number of $\eta_j$ which are zero one-forms. 
	\end{proof}
	
	\begin{remark}
		Assume that $x\in Z_{\reg}$ is in the unbranching locus of $\eta$. If near $x$, some branch of $\eta|_{Z_{\reg}}$ has nonempty degenerating locus, then it follows from the definition of $\Phi_\eta$ that $n_i>0$ for some $0\leq i\leq m$. Furthermore, if the degenerating locus has dimension strictly less than $\dim Z$, then $n_i>0$ for some $1\leq i\leq m$. 
	\end{remark}
	
	%It is easy to check that for any $s\in \C^*$, $\Lambda_\eta^\diamond\cap (T^*X\times \{s\})$ is a Lagrangian cycle. Since the limit of Lagrangian cycle is also Lagrangian, $\Phi_\eta \Lambda$ is a Lagrangian cycle. 
	
	%\textcolor{red}{The proof of being Lagrangian is not correct. We may need to postpone this part to Proposition \ref{prop_L}.}
	
	We have proved that $\Phi_\eta$ maps conic Lagrangian cycles to conic Lagrangian cycles. Hence it induces a group homomorphism $\Phi_\eta: L(X)\to L(X)$, which we call \emph{the vanishing cycle functor} of $\eta$. The following proposition justifies this name. 
	%\end{definition}
	
	\begin{proposition}[Massey]
		If $f$ is a holomorphic function on a complex manifold $X$, then $\Phi_{df}$ is the total vanishing cycle functor. In other words, given a conic Lagrangian cycle $\Lambda$ in $T^*X$,
		\begin{equation}\label{eq_1}
			\Phi_{df}(\Lambda)=\sum_{t\in \C}\Phi_{f-t}(\Lambda)
		\end{equation}
		where $\Phi_{f-t}$ is the standard vanishing cycle functor of the holomorphic function $f-t$. Note that the sum on the right-hand side is a locally finite sum, because restricting to a small ball in $X$, there are only finitely many $t$ such that $\Phi_{f-t}(\Lambda)\neq 0$. 
	\end{proposition}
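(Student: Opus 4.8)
The plan is as follows. Since both $\Phi_{df}$ and each $\Phi_{f-t}$ are additive homomorphisms $L(X)\to L(X)$ and the right-hand side of \eqref{eq_1} is locally finite, I would first reduce to the case of an irreducible conic Lagrangian cycle $\Lambda=T^*_ZX$ with $Z\subset X$ closed and irreducible; and since both sides depend only on the restriction of the data to an arbitrarily small ball --- for $\Phi_{df}$ by its very construction, and for $\Phi_{f-t}$ because vanishing cycles are local --- I may assume $X$ is a ball. By \cref{remark_def}, in this single-valued situation $\Phi_{df}(\Lambda)$ is MacPherson's graph construction: writing $\Psi_s\colon T^*X\to T^*X$ for fibrewise translation by the section $s\cdot df$, the cycle $\Phi_{df}(\Lambda)$ is the restriction to $T^*X\times\{\infty\}$ of the closure of $\bigcup_{s\in\C}\Psi_s(\Lambda)\times\{s\}$ in $T^*X\times\bP^1$.

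Next I would match this graph cycle with the characteristic cycle of the total vanishing cycle. The essential input is the classical description --- available in this generality through work of Massey --- of the characteristic cycle of a vanishing cycle functor by a relative-polar / deformation construction: for any holomorphic $g\colon X\to\C$ one has
\begin{equation*}
\Phi_g(\Lambda)=\Phi_{dg}(\Lambda)\big|_{T^*X|_{g^{-1}(0)}},
\end{equation*}
that is, $\Phi_g(\Lambda)$ is exactly the part of the graph cycle $\Phi_{dg}(\Lambda)$ lying over $g^{-1}(0)$ (with $\Phi_g$ normalised to preserve effectivity, compatibly with $\Phi_\eta$). Applying this with $g=f-t$, for which $dg=df$ and $g^{-1}(0)=f^{-1}(t)$, gives $\Phi_{f-t}(\Lambda)=\Phi_{df}(\Lambda)\big|_{T^*X|_{f^{-1}(t)}}$ for every $t\in\C$. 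It then remains to check that $\Phi_{df}(\Lambda)$ is the sum over $t\in\C$ of its sub-cycles supported over the fibres $f^{-1}(t)$ --- equivalently, that $f$ is constant on every irreducible component of the support of $\Phi_{df}(\Lambda)$ --- for then $\sum_t\Phi_{f-t}(\Lambda)=\sum_t\Phi_{df}(\Lambda)\big|_{T^*X|_{f^{-1}(t)}}=\Phi_{df}(\Lambda)$, and the local finiteness of the sum is just the finiteness of the stratified critical values of $f$ on a ball.

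I expect this last point to be the main obstacle. Over $Z_{\reg}$ it is elementary: the fibre of $\Psi_s(\Lambda)$ over $x\in Z_{\reg}$ is $(T^*_ZX)_x+s\cdot df(x)$, which has a finite limit as $s\to\infty$ only when $df(x)\in(T^*_ZX)_x$, i.e.\ only when $d(f|_Z)(x)=0$; hence the support of $\Phi_{df}(\Lambda)$ over $Z_{\reg}$ lies above $\Crit(f|_{Z_{\reg}})$, along which $f$ is locally constant because $df$ vanishes on every analytic arc it contains. Thus $f$ is constant on every component of the support that meets $Z_{\reg}$, and trivially on every component that is a point; the remaining case of a positive-dimensional component inside $Z_{\sing}$ I would treat by induction on $\dim Z$, cutting $X$ by a generic smooth slice transverse to $Z$ at a general point of that component, which strictly lowers $\dim Z$ without changing the multiplicity of the relevant conormal in the graph cycle (a slice argument for characteristic cycles, in the spirit of \cref{lemma_Z}), thereby reducing to the case already handled. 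Alternatively --- and this is where the real content sits --- one may simply quote Massey's theorem in its full cycle-level form, which directly asserts \eqref{eq_1}; the reductions above then only serve to align his formulation with the deformation-to-the-normal-cone description of $\Phi_\eta$ adopted here.
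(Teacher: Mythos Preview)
Your proposal is correct and ultimately coincides with the paper's proof, which is a two-line argument: since $\eta=df$ is a section, $\Phi_{df}(\Lambda)$ is MacPherson's deformation to the normal cone (as in \cref{remark_def}), and then equation~\eqref{eq_1} is equivalent to \cite[Theorem~2.10]{Massey}. This is exactly the content of your final paragraph.

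The longer route you outline before that --- invoking the per-fibre identity $\Phi_{f-t}(\Lambda)=\Phi_{df}(\Lambda)\big|_{T^*X|_{f^{-1}(t)}}$ and then arguing separately that $f$ is constant on each component of the support of $\Phi_{df}(\Lambda)$ --- is not a genuinely different approach so much as an attempt to unpack Massey's theorem into two halves. The first half (the per-fibre identity) \emph{is} Massey's theorem, so citing it as a ``classical description'' and then summing does not avoid the citation. The second half (constancy of $f$ on components) is in fact part of the same circle of results on relative polar varieties; your sketch over $Z_{\reg}$ is fine, but the slicing induction over $Z_{\sing}$ is left vague and would need care to make precise. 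Since you yourself conclude that the honest move is to quote Massey directly, there is no daylight between your proposal and the paper's proof.
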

	\begin{proof}
		In this case, $\Gamma_\eta$ is a section of $T^*X$. 
		Then $\Phi_\eta\Lambda$ is the deformation to the normal cone (Remark \ref{remark_def}), the equation \eqref{eq_1} is equivalent to \cite[Theorem 2.10]{Massey}. 
	\end{proof}
	
	\subsection{Some properties of $\Phi_\eta$}
	%By definition $\Phi_\eta$ sends effective cycles in $L(X)$ to effective cycles. We list some further properties of $\Phi_\eta$. 
	
	\begin{proposition}\label{prop_d}
		Suppose $\eta$ is a $d$-valued closed one-form. For any $\Lambda\in L(X)$, we have
		\[
		d\, \Lambda\cdot T_X^* X=\Phi_\eta(\Lambda)\cdot T^*_X X
		\]
		where $\cdot$ denotes the intersection number in $T^*X$. 
	\end{proposition}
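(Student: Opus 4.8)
The plan is to read both sides of the identity as the degree of the $0$-cycle that a single fiber of the family $\overline{\Lambda_\eta^\diamond}\subset T^*X\times\bP^1$ (the one used to define $\Phi_\eta$) cuts out on the zero section, and then invoke the principle of conservation of number. By $\Z$-bilinearity of both sides in $\Lambda$ it suffices to treat $\Lambda=T^*_ZX$ irreducible; I will also assume the intersection numbers in question are finite (e.g.\ $X$ projective, or $\Lambda$ compactly supported — the general statement is local near $\Supp\Lambda$ and reduces to this). Recall that $\overline{\Lambda_\eta^\diamond}$ restricts, over $T^*X\times\{\infty\}$, to $\Phi_\eta(\Lambda)$ by the very definition of $\Phi_\eta$, and, over $s\in\C$, to the cycle $\Lambda_{\eta,s}$ whose underlying set is $\{(x,\zeta):\zeta\in\Lambda|_x+s\,\Gamma_\eta|_x\}$. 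The fiber at $\infty$ carries the right-hand side; I extract the left-hand side from the fiber at $s=0$.

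The next step is to show $\Lambda_{\eta,0}=d\,\Lambda$ as cycles. Scaling a $d$-valued one-form by $0$ collapses the multisection $\Gamma_\eta$ onto the zero section, and since $\Gamma_\eta^\diamond$ is finite of degree $d$ over $X\times\C$, its fiber at $s=0$ is $d\,T^*_XX$ as a cycle. Restricting $\Lambda^\diamond\times_{X\times\C}\Gamma_\eta^\diamond$ to $s=0$ therefore gives $\Lambda\times_X(d\,T^*_XX)=d\,(\Lambda\times_X T^*_XX)$, which the fiberwise addition $\mathfrak{S}$ carries isomorphically onto $\Lambda$ (via $(\alpha,0_{\varpi(\alpha)})\mapsto\alpha$, where $\varpi\colon T^*X\to X$). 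Pushing forward, and using that restriction to the Cartier divisor $\{s=0\}$ commutes with the proper pushforward $\mathfrak{S}_*$, one obtains $\Lambda_{\eta,0}=\mathfrak{S}_*\big(d\,(\Lambda\times_X T^*_XX)\big)=d\,\Lambda$.

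Finally I would apply conservation of number to the family $\overline{\Lambda_\eta^\diamond}\to\bP^1$ and the fixed zero section $T^*_XX$. Since $\overline{\Lambda_\eta^\diamond}$ is supported over $Z\times\bP^1$, the intersection locus $\overline{\Lambda_\eta^\diamond}\cap(T^*_XX\times\bP^1)$ lies in $(T^*_XX|_Z)\times\bP^1\cong Z\times\bP^1$, which is compact; hence it is proper over $\bP^1$ and every fiber is complete. Consequently the degree of the $0$-cycle cut by $\overline{\Lambda_\eta^\diamond}|_{T^*X\times\{t\}}$ on $T^*_XX$ is independent of $t\in\bP^1$ (cf.\ \cite[Ch.~10]{Fulton}). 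Evaluating at $t=0$, where the fiber is $d\,\Lambda$ by the previous step, and at $t=\infty$, where the fiber is $\Phi_\eta(\Lambda)$ by definition, yields
\[
d\,\big(\Lambda\cdot T^*_XX\big)=\Phi_\eta(\Lambda)\cdot T^*_XX,
\]
which is the claim. (Closedness of $\eta$ enters only through \cref{prop_Lagrangian}, ensuring $\Phi_\eta(\Lambda)\in L(X)$ so that the right-hand side is defined.)

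The main obstacle is the multiplicity-and-properness bookkeeping: (i) confirming that the cycle-theoretic fiber of $\overline{\Lambda_\eta^\diamond}$ over $s=0$ is exactly $d\,\Lambda$ — equivalently, that $\Gamma_\eta^\diamond$ is finite flat of degree $d$ over $X\times\C$ and that specialization to $\{s=0\}$ commutes with $\mathfrak{S}_*$; and (ii) checking the hypotheses of conservation of number, i.e.\ that the intersection of the family with $T^*_XX\times\bP^1$ stays inside a fixed compact part of the zero section. Both are routine but need care, because the fiber products defining $\Lambda_\eta^\diamond$ can acquire embedded or excess multiplicities, as already flagged in \cref{remark1}.
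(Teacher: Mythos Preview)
Your proof is correct and follows essentially the same route as the paper: compare the fibers of $\overline{\Lambda_\eta^\diamond}$ at $0$ and $\infty$ using invariance of intersection numbers under rational equivalence (which you phrase as conservation of number), and identify the fiber at $0$ with $d\,\Lambda$. The paper's proof is a three-line version of yours --- it simply asserts that $\overline{\Lambda_\eta^\diamond}\cap(T^*X\times\{0\})=d\,\Lambda$ ``by definition'' and invokes rational equivalence --- whereas you spell out the multiplicity computation via $\Gamma_\eta^\diamond|_{s=0}=d\,T^*_XX$ and verify the properness needed for conservation of number; these extra details are exactly the bookkeeping you flag as the main obstacle, and the paper leaves them implicit.
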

	\begin{proof}
		Recall that $\Phi_\eta\Lambda$ is defined to be the restriction of $\overline{\Lambda_\eta^\diamond}$ to $T^*X\times \{\infty\}$. Since the intersection number is invariant under rational equivalence, 
		\[
		\Phi_\eta(\Lambda)\cdot T^*_X X=\big(\overline{\Lambda_\eta^\diamond}\cap (T^*X\times \{0\})\big)\cdot T^*_X X.
		\]
		By definition, we have $\overline{\Lambda_\eta^\diamond}\cap (T^*X\times \{0\})=d \Lambda$ as $n$-cycles on $T^*X$. Therefore, the desired equality follows. 
		%Without loss of generality, we can assume that $\Lambda$ is irreducible. Since $\eta$ is a $d$-valued one-form, the scheme theoretic intersection $\Lambda^\diamond\cap (T^* X\times \{0\})$ is rationally equivalent to $d \Lambda$. The desired assertion follows from the fact that intersection number is invariant under rational equivalence. \textcolor{blue}{To be corrected. }
	\end{proof}

	\begin{corollary}\label{cor_subvariety}
		Let $X$ be a complex manifold and $Z$ be a compact irreducible subvariety of $X$. If there exists a closed multivalued one-form $\eta$ on $X$ whose restriction to $Z$ is non-trivial, then there exist proper closed subvarieties $Z_i$ of $Z$ and $\lambda_i \in \Q_{\geq 0}$ such that
		\[
		T^*_Z X\cdot T^*_X X=\sum_{1\leq i\leq m} \lambda_i \;T^*_{Z_i} X\cdot T^*_X X.
		\]
	\end{corollary}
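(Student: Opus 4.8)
The plan is to apply the vanishing cycle functor $\Phi_\eta$ to the single conic Lagrangian cycle $\Lambda = T^*_Z X$ and then intersect with the zero section $T^*_X X$, reading off the claimed identity from Propositions \ref{prop_d} and \ref{prop_L}. Note that $\eta$, being a multivalued one-form, is $d$-valued for some positive integer $d$ (namely $d=\sum_i n_i\deg[\Gamma_i:X]$ in the notation of \Cref{def:mutivalued}), and it is closed by hypothesis; moreover $Z$ is compact and irreducible by assumption, so both propositions apply with $\Lambda=T^*_ZX$.

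First I would invoke \Cref{prop_d} with $\Lambda = T^*_Z X$ to obtain
\[
d\, T^*_Z X \cdot T^*_X X = \Phi_\eta(T^*_Z X)\cdot T^*_X X.
\]
Next, since $\eta|_{Z_{\reg}}$ is non-trivial by assumption, \Cref{prop_L} gives a presentation
\[
\Phi_\eta(T^*_Z X) = n_0\, T^*_Z X + \sum_{1\le i\le m} n_i\, T^*_{Z_i}X,
\]
with each $Z_i$ a proper closed subvariety of $Z$, each $n_i\ge 0$, and crucially $n_0<d$. Substituting this expression into the previous display, using linearity of the intersection pairing, and moving the $n_0$-term to the left-hand side yields
\[
(d-n_0)\, T^*_Z X\cdot T^*_X X = \sum_{1\le i\le m} n_i\, T^*_{Z_i}X\cdot T^*_X X.
\]
Since $d-n_0>0$, I divide through by it and set $\lambda_i = n_i/(d-n_0)\in\Q_{\ge 0}$, which is exactly the asserted formula.

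The one point deserving a word is that all intersection numbers in sight are genuinely well-defined: this holds because $Z$ is compact, hence each $Z_i$ (a closed subset of $Z$) is compact, so each $T^*_{Z_i}X\cap T^*_X X$ is compact and the pairing is the degree of a zero cycle supported there, as in the discussion following \Cref{thm_index}. Beyond this bookkeeping there is essentially no obstacle: the substantive content — that $\Phi_\eta$ multiplies the intersection number with the zero section by $d$ (\Cref{prop_d}), that $\Phi_\eta(T^*_ZX)$ remains an effective Lagrangian cycle with the $T^*_ZX$-coefficient $n_0$ equal to the multiplicity of the zero form in $\eta|_{Z_{\reg}}$, and that non-triviality of $\eta|_{Z_{\reg}}$ forces $n_0<d$ (\Cref{prop_L}) — has already been established, so the corollary follows immediately.
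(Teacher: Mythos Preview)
Your proof is correct and follows essentially the same approach as the paper: apply \Cref{prop_d} and \Cref{prop_L} to $\Lambda=T^*_ZX$, then solve for $T^*_ZX\cdot T^*_XX$ using $n_0<d$ to obtain $\lambda_i=n_i/(d-n_0)$.
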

	\begin{proof}
		By Propositions \ref{prop_L} and \ref{prop_d}, we have
		\[
		d T^*_Z X\cdot T_X^* X=\Phi_\eta(\Lambda)\cdot T^*_X X=\Big(n_0T^*_Z X+\sum_{1\leq i\leq m} n_i \;T^*_{Z_i} X\Big)\cdot T^*_X X.
		\]
		By the assumption that the restriction of $\eta$ to $Z_{\reg}$ is non-trivial, $n_0<d$, and we have
		\[
		T^*_Z\cdot T_X^* X=\sum_{1\leq i\leq m} \frac{n_i}{d-n_0} \;T^*_{Z_i} X\cdot T^*_X X. \qedhere
		\]
	\end{proof}
	
	The classical vanishing cycle functor is defined for constructible complexes. So we end the section with the following question.
	\begin{question}
		Let $\eta$ be a multivalued closed one-form on a complex manifold $X$. Is there a natural lifting of $\Phi_\eta$ to a functor of constructible complexes $\Phi_\eta: D^b_c(X, \C)\to D^b_c(X, \C)$?
	\end{question}
	
	\subsection{Proof of Theorem \ref{thm_main} in the case when $\mathrm{char}(K)>0$}
	\begin{proposition}\label{prop:positive}
		Let $X$ be a smooth projective variety and let $\rho:\pi_1(X)\to \GL(r,K)$ be a linear representation where $K$ is a field of characteristic $p>0$. If $\rho$ is large, then there exists a closed multivalued one-form $\eta$ on $X$ such that for any positive-dimensional closed subvariety $Z$ of $X$, the restriction of $\eta$ to $Z|_{\reg}$ is non-trivial.
	\end{proposition}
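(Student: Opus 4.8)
The plan is to produce $\eta$ as a finite sum of Katzarkov--Eyssidieux one-forms $\eta_\tau$ from \Cref{thm:KE}, attached to well-chosen reductive representations $\pi_1(X)\to\GL(r',L)$ over non-archimedean local fields $L$ of characteristic $p$, and to combine the largeness of $\rho$ with the rigidity of finitely generated linear groups in characteristic $p$ so that along any positive-dimensional subvariety at least one of these $\eta_\tau$ stays non-trivial.

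\emph{Step 1 (passing to reductive representations over local fields).} Since $\pi_1(X)$ is finitely generated, $\rho$ factors through $\GL(r,F)$ for a finitely generated subfield $F\subseteq K$ of characteristic $p$. Fix a positive-dimensional irreducible subvariety $Z\subseteq X$ and put $\Gamma_Z=\im[\pi_1(Z_{\norm})\to\pi_1(X)]$, which is finitely generated and, by largeness, has $\rho(\Gamma_Z)$ infinite. Since a finitely generated group all of whose elements are unipotent is finite in characteristic $p$ (unipotent matrices have $p$-power order, and a finitely generated nilpotent torsion group is finite, using Kolchin), $\rho(\Gamma_Z)$ has no finite-index subgroup consisting of unipotent matrices, so the semisimplification of $\rho(\Gamma_Z)$ is infinite; by Burnside--Schur it then contains an element of infinite order. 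As $\overline{\mathbb{F}_p}^\times$ consists of roots of unity, this element, being semisimple of infinite order, has an eigenvalue $\lambda$ transcendental over $\mathbb{F}_p$, and $\lambda$ is then an eigenvalue of some $\rho(\gamma)$, $\gamma\in\Gamma_Z$, hence algebraic over $F$ but transcendental over $\mathbb{F}_p$. After a harmless finite extension of $F$, choose a discrete valuation $v$ with $v(\lambda)\neq 0$; then over the non-archimedean local field $F_v$ the matrix $\rho(\gamma)$ has an eigenvalue of nonzero valuation, hence lies in no bounded subgroup, so the induced representation $\rho_v\colon\pi_1(X)\to\GL(r,F_v)$ has $\rho_v(\Gamma_Z)$ unbounded. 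Its semisimplification $\tau_Z$ has the same characteristic polynomials, hence the same eigenvalues, so $\tau_Z(\Gamma_Z)$ is still unbounded, and $\tau_Z$ is reductive; by \Cref{thm:KE}, the reduction $s_{\tau_Z}\colon X\to S_{\tau_Z}$ does not contract $Z$, i.e. $\eta_{\tau_Z}|_{Z_{\reg}}$ is non-trivial.

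\emph{Step 2 (from countably many to finitely many one-forms).} Let $s_{\rm fac}\colon X\to S_{\rm fac}$ be the simultaneous Stein factorization of the family of all $s_\tau$ attached to reductive representations $\tau\colon\pi_1(X)\to\GL(r',L)$ with $r'\le r$ and $L$ a non-archimedean local field of characteristic $p$. Exactly as for the factorization map of \Cref{def:reduction ac} (using \cite[Lemma 1.28]{DY23}, cf. \cite[p.~7]{Car60}), $s_{\rm fac}$ is a proper morphism with connected fibers that is already realized by finitely many members $s_{\tau_1},\dots,s_{\tau_k}$, so that for any subvariety $W\subseteq X$, $s_{\rm fac}(W)$ is a point if and only if each $s_{\tau_i}(W)$ is a point. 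Given a positive-dimensional irreducible $Z$, Step 1 produces a $\tau_Z$ in this family with $s_{\tau_Z}(Z)$ not a point; since $s_{\tau_Z}$ factors through $s_{\rm fac}$, the image $s_{\rm fac}(Z)$ is not a point, whence some $s_{\tau_i}(Z)$ is not a point, i.e. $\eta_{\tau_i}|_{Z_{\reg}}$ is non-trivial. Therefore $\eta:=\sum_{i=1}^{k}\eta_{\tau_i}$, which is a closed multivalued one-form by \Cref{lemma_closed}, has $\eta|_{Z_{\reg}}$ non-trivial for every positive-dimensional subvariety $Z$, as claimed.

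The main obstacle is Step 1: extracting, from a single representation with infinite image over an arbitrary field of characteristic $p$, a \emph{reductive} representation over a non-archimedean local field that still detects the unboundedness of $\Gamma_Z$. Two points need care: that semisimplification does not destroy unboundedness — this is why the argument runs through eigenvalues and characteristic polynomials rather than the groups themselves — and that $\tau_Z$ satisfies whatever precise form of ``reductive'' is required in \Cref{thm:KE}; if necessary one descends to a finite extension $L/F_v$ (still a local field) and replaces $\tau_Z$ by an absolutely irreducible constituent of $\rho_v\otimes L$ on which the transcendental eigenvalue still occurs. The remaining ingredients (the equivalences of \Cref{thm:KE} and the finiteness of the simultaneous Stein factorization) are quoted directly.
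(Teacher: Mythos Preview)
Your approach differs from the paper's. The paper's proof of \Cref{prop:positive} is essentially a one-line citation of \cite[Theorem~2.7 \& Corollary~2.10]{DY24}, which provides a \emph{single} semisimple representation $\tau:\pi_1(X)\to\GL(N,L)$ over a non-archimedean local field $L$ of characteristic $p$ such that the Katzarkov--Eyssidieux reduction $s_\tau$ equals the Shafarevich morphism of $\rho$; since $\rho$ is large this morphism is the identity, and the single one-form $\eta=\eta_\tau$ from \Cref{thm:KE} already does the job. You instead unpack part of that black box: for each $Z$ you manufacture some $\tau_Z$ detecting $Z$ via an eigenvalue/valuation argument, and then invoke the simultaneous Stein factorization (exactly as the paper does in the characteristic-zero case, \Cref{prop:Shafarevich}) to cut down to finitely many $\tau_i$. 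Your route is more self-contained and makes the positive-characteristic mechanism visible; the paper's is shorter but relies on the packaged result from \cite{DY24}.

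There is, however, one real slip in Step~1. You write ``choose a discrete valuation $v$ with $v(\lambda)\neq 0$; then over the non-archimedean local field $F_v$\ldots'', but when $F$ has transcendence degree greater than one over $\mathbb F_p$ the completion $F_v$ is typically \emph{not} a local field: a rank-one discrete valuation with $v(\lambda)\neq 0$ will in general have infinite residue field (e.g.\ the $x$-adic valuation on $\mathbb F_p(x,y)$ has residue field $\mathbb F_p(y)$), and \Cref{thm:KE} genuinely requires a local field. The fix is standard but should be stated: rather than completing $F$, embed it into some $\mathbb F_q((t))$. Extend $\lambda$ to a transcendence basis $\lambda=\lambda_1,\dots,\lambda_d$ of $F$ over $\mathbb F_p$, send $\lambda_1\mapsto t$ and the remaining $\lambda_i$ to elements of $\mathbb F_p((t))$ algebraically independent over $\mathbb F_p(t)$ (possible since $\mathbb F_p((t))$ has uncountable transcendence degree over $\mathbb F_p$), and then extend across the finite extension $F/\mathbb F_p(\lambda_1,\dots,\lambda_d)$ to land in a finite extension $L$ of $\mathbb F_p((t))$, still a local field of characteristic $p$, with $v_L(\lambda)>0$. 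With this correction your argument goes through.
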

	\begin{proof}
		By \cite[Theorem 2.7 \& Corollary 2.10]{DY24}, there exists a semisimple representation $\tau:\pi_1(X)\to \GL(N,L)$ where $L$ is a non-archimedean local field of characteristic $p$ such that the Katzarkov-Eyssidieux reduction map $s_\tau$ is the Shafarevich morphism of $\rho$.  Since $\rho$ is large, its Shafarevich morphism is the identity map. Hence $s_\tau$ is the identity map. Let $\eta$ be the associated closed multivalued one-form in Proposition \ref{thm:KE}. By the equivalence of \cref{item:KZ2} and \cref{item:KZ4} in Proposition \ref{thm:KE}, for any positive dimensional subvariety $Z$, the restriction $\eta|_{Z_{\reg}}$ is non-trivial. The proposition is proved. 
	\end{proof}
	
	\begin{proof}[Proof of Theorem \ref{thm_main} assuming $\mathrm{char}(K)>0$]
		In this case the axioms of Proposition \ref{prop:positive} are satisfied, and hence we have a closed multivalued one-form $\eta$ whose restriction to any positive dimensional subvariety is non-trivial. Iterating Corollary \ref{cor_subvariety}, we can express $T^*_Z X\cdot T^*_X X$ as a sum of finitely many $\lambda_i T^*_{Z_i} X\cdot T^*_X X$, where $\lambda_i>0$ and $Z_i$ is a point. If $Z_i$ is a point, then $T^*_{Z_i} X\cdot T^*_X X=1$. Thus, $T^*_Z X\cdot T^*_X X\geq 0$.
	\end{proof}
	
	\section{Period maps of $\C$-VHS and positivity}
	\subsection{Positivity from the period maps}
	In this subsection, we prove the following generalization of \cite[Theorem 1.9]{AW}. 
	\begin{proposition}\label{prop_geq0}
		Let $X$ be a projective manifold with a representation $\sigma: \pi_1(X)\to \GL(N, \C)$ such that the associated local system $L_\sigma$ underlies a $\C$-VHS. Assume that $Z$ is an irreducible subvariety of $X$ such that 
		\begin{enumerate}
			\item the pullback of ${\sigma}$ to $\pi_1(Z_{\norm})$ is large;
			\item $\Gamma\coloneqq \sigma(\mathrm{Im}[\pi_1(Z_{\norm})\to \pi_1(X)])$ is a discrete subgroup of $\GL(N, \C)$. 
		\end{enumerate}
		Then the intersection number $T_Z^*X \cdot T_X^* X\geq 0$. Equivalently, $(-1)^{\dim Z}\chi(Eu_Z)\geq 0$. 
	\end{proposition}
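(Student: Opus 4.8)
\emph{Overview and set-up.} The plan is the one of \cite{AW}: produce from the $\C$-VHS a period map of $Z$ with finite fibers, and feed its generic immersivity into the negative curvature of the period domain. By \eqref{eq_Eu} the two displayed conclusions coincide, and both amount to $(-1)^{n}\chi(Z,Eu_Z)\ge 0$ with $n=\dim Z$; the only difference with \cite{AW} is that here we \emph{assume} the monodromy is discrete rather than deriving it from cohomological rigidity, so semisimplicity and rigidity of $\sigma$ play no role. Let $\nu\colon Z_{\norm}\to Z\hookrightarrow X$ be the normalization and $q\colon W\to Z_{\norm}$ a resolution with $W$ smooth projective. Pulling the $\C$-VHS on $X$ back along $W\to X$ yields a polarized $\C$-VHS on $W$; its monodromy $\varrho\colon\pi_1(W)\to\GL(V,S)$ has image inside $\Gamma=\sigma(\mathrm{Im}[\pi_1(Z_{\norm})\to\pi_1(X)])$, hence discrete by hypothesis~(2). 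Let $\phi\colon\widetilde W^{\univ}\to\cD$ be the associated $\varrho$-equivariant horizontal period map. Since the isotropy subgroups of $\GL(V,S)$ acting on $\cD$ are compact, the discrete group $\varrho(\pi_1(W))$ acts properly discontinuously on $\cD$; after replacing $\Gamma$ by a torsion-free finite-index subgroup (equivalently $W$ by a finite étale cover, harmless for the Chern number computed below) the quotient $\cD_\Gamma:=\Gamma\backslash\cD$ is a complex manifold and $\phi$ descends to a horizontal holomorphic map $\bar\phi\colon W\to\cD_\Gamma$.

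\emph{Largeness forces finite fibers.} If $\bar\phi$ were constant on a positive-dimensional irreducible $V\subseteq W$, then, lifting to $\widetilde V^{\univ}$, the group $\varrho(\mathrm{Im}[\pi_1(V_{\norm})\to\pi_1(W)])$ would fix a point of $\cD$, hence lie in a compact subgroup of $\GL(V,S)$; being also discrete it would be finite, contradicting the largeness of the pullback of $\sigma$ to $\pi_1(Z_{\norm})$ (hypothesis~(1)). Thus $\bar\phi$ has finite fibers, and since $W$ is compact and $\cD_\Gamma$ Hausdorff, $\bar\phi$ is proper; hence $\bar\phi(W)$ is a compact analytic subvariety of $\cD_\Gamma$ and $\bar\phi\colon W\to\bar\phi(W)$ is finite. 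In particular $\bar\phi$ is generically finite, and since we are in characteristic $0$ it is generically immersive: $d\bar\phi\colon TW\to\bar\phi^{*}T^{h}\cD_\Gamma$ is injective over a dense Zariski-open $W^{\circ}\subseteq W$.

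\emph{Positivity from curvature.} The number $T_Z^{*}X\cdot T_X^{*}X=(-1)^{n}\chi(Z,Eu_Z)$ depends only on $Z$, and a standard computation through the Chern--Mather class rewrites it as $\int_{\widehat Z}c_n(\widetilde T^{\vee})$, where $p\colon\widehat Z\to Z$ is the Nash modification and $\widetilde T\subset p^{*}TX$ the tautological rank-$n$ subbundle. Enlarging $W$ so that it also dominates $\widehat Z$ birationally and letting $\cE$ be the pullback of $\widetilde T$, the projection formula gives $(-1)^{n}\chi(Z,Eu_Z)=\int_{W}c_n(\cE^{\vee})$, where $\cE^{\vee}$ is a coherent sheaf on the smooth projective $W$ agreeing with $\Omega^{1}_{W}$ over the preimage of $Z_{\reg}$. (Alternatively one can bypass the Nash modification and argue by induction on $\dim Z$, using resolution of singularities and the decomposition theorem to split off the contributions supported on $\sing Z$.) Over $W^{\circ}$ the injection $d\bar\phi$ realizes $TW^{\circ}$ as a holomorphic subbundle of $\bar\phi^{*}T^{h}\cD_\Gamma$; restricting the $\GL(V,S)$-invariant Hodge metric to it and using Griffiths's curvature formulas together with the curvature-decreasing property of subbundles, the dual metric on $\cE^{\vee}|_{W^{\circ}}$ is Griffiths-semipositive, and strictly positive in generic horizontal directions, so its top Chern form is $\ge 0$. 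This is precisely the input of \cite[Theorem~1.9]{AW}, whose proof uses only that the period map is horizontal, generically finite, and has discrete monodromy; it yields $\int_{W}c_n(\cE^{\vee})\ge 0$ (equivalently, after a further modification $\cE^{\vee}$ becomes a nef vector bundle and one invokes the numerical non-negativity of Chern classes of nef bundles). Hence $T_Z^{*}X\cdot T_X^{*}X\ge 0$.

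\emph{Main obstacle.} The delicate point is that the metric on $\cE^{\vee}$ obtained by pulling back the Hodge metric is a priori defined only over $W^{\circ}$ and degenerates both along the non-immersive locus of $\bar\phi$ and ``at the boundary'' of $\cD_\Gamma$; one must control this degeneration so that the metric extends to a singular Hermitian metric on $\cE^{\vee}$ over all of $W$ with semipositive curvature current and with singularities mild enough to compute $\int_W c_n(\cE^{\vee})$. In the pure $\C$-VHS case this is exactly what \cite{AW} supplies, via asymptotic Hodge theory (the nilpotent orbit theorem and the norm estimates of Cattani--Kaplan--Schmid near a compactification of $\cD_\Gamma$); in the paper's later mixed-Hodge setting the analogous control is what Proposition~\ref{prop_E} is designed to provide.
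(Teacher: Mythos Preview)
Your argument has a gap at the transition from the Nash cotangent bundle $\cE^{\vee}$ to the Hodge metric. The identity $(-1)^{n}\chi(Eu_Z)=\int_W c_n(\cE^{\vee})$ is correct, but the semipositivity coming from $d\bar\phi$ is a statement about $\Omega^1_W$: the period map immerses $TW|_{W^\circ}$, not $\cE|_{W^\circ}$, into $\bar\phi^{*}T^{h}\cD_\Gamma$. Identifying $\cE^{\vee}$ with $\Omega^1_W$ over a dense open and then invoking the asymptotic estimates of \cite{AW} does not show that the resulting singular metric extends across the exceptional locus \emph{as a metric on $\cE^{\vee}$}; those estimates control the Hodge metric on $\Omega^1_W$. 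The two bundles differ precisely where $Z$ fails to be normal, and the paper flags this explicitly: the argument of \cite{AW} yields only $(-1)^n\chi(Eu_{Z_{\norm}})\ge0$, which is strictly weaker than $(-1)^n\chi(Eu_Z)\ge0$ (see the Example following Definition~\ref{defn_delta}). Your alternative via the decomposition theorem does not close the gap either, since it produces $IC_Z$ rather than $Eu_Z$, and the boundary pieces are exactly what is uncontrolled.

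The paper's proof is genuinely different and avoids resolutions and Nash blow-ups. One defines a constructible function $\delta$ on $Z_{\norm}$ whose value at $x$ is $(-1)^n$ times the Euler obstruction of the \emph{local branch} $p(U_x)\subset Z$ at $p(x)$ (Definition~\ref{defn_delta}); by construction $p_*\delta=(-1)^nEu_Z$, hence $(-1)^n\chi(Eu_Z)=\chi(\delta)=\chi(\phi_*\delta)$ for the finite period map $\phi\colon Z_{\norm}\to\Gamma\backslash D$ (your discreteness and largeness argument establishing finiteness of $\phi$ is fine). The key step is that $\phi_*\delta$ is CC-effective: one lifts $\delta$ to the monodromy cover $\widetilde Z_{\norm}$, pushes to $\widetilde Z\subset\widetilde X$ where it is CC-effective (being the pullback of $(-1)^nEu_Z$ along a covering map), and then applies Proposition~\ref{prop_AMSS} (finite pushforward preserves CC-effectiveness) to the finite map $\widetilde Z\to D$. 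The curvature input is \cite[Proposition~6.1]{AW} (recorded here as Proposition~\ref{prop_AW}): every compact horizontal subvariety $Y\subset\Gamma\backslash D$ satisfies $(-1)^{\dim Y}\chi(Eu_Y)\ge0$. Summing over the CC-effective expansion of $\phi_*\delta$ gives the conclusion.
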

	When $Z$ happens to be normal, the proposition follows immediately from the arguments of \cite{AW}. However, some technical arguments are required to deal with the non-normal case. 
	
	First, we need to reduce to the case when $\Gamma$ is torsion free. 
	
	\begin{lemma}
		Let $X$ be a projective manifold, and $\pi: \widetilde{X}\to X$ be a finite covering map. For an irreducible subvariety $Z$ of $X$, let $\widetilde{Z}$ be a connected component of $\pi^{-1}(Z)$. Then,
		\[
		T_{\widetilde{Z}}^*\widetilde{X}\cdot T^*_{\widetilde{X}}\widetilde{X}=d \; T_Z^* X \cdot T^*_X X
		\] 
		where $d$ is the degree of the covering map $\pi|_{\widetilde{Z}}: \widetilde{Z}\to Z$. 
	\end{lemma}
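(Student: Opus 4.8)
The plan is to reduce the identity about intersection numbers on the conormal varieties to a statement about characteristic cycles and the proper pushforward formula for the finite map $\pi$. First I would work with the local Euler obstruction functions instead of the conormal varieties directly, using the translation \eqref{eq_Eu}: since $T_{\widetilde{Z}}^*\widetilde{X}\cdot T^*_{\widetilde{X}}\widetilde{X} = (-1)^{\dim \widetilde{Z}}\chi(Eu_{\widetilde{Z}})$ and similarly for $Z$, and since $\dim \widetilde{Z} = \dim Z$ because $\pi$ is finite, it suffices to prove that $\chi(Eu_{\widetilde{Z}}) = d\,\chi(Eu_Z)$. Here $\chi$ denotes the global Euler characteristic of a constructible function on the compact manifold $\widetilde{X}$ (resp.\ $X$).

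The key step is to relate $Eu_{\widetilde{Z}}$ and $Eu_Z$ via the map $p \coloneqq \pi|_{\widetilde{Z}}\colon \widetilde{Z}\to Z$, which is a finite covering of degree $d$. Because the local Euler obstruction is an intrinsic local invariant of a variety (independent of the embedding into a smooth ambient space, cf.\ the discussion around \cref{prop_AMSS}), and because an étale map is a local isomorphism, one has $p^*(Eu_Z) = Eu_{\widetilde{Z}}$ as constructible functions on $\widetilde{Z}$; equivalently, extending by zero, $\pi^*(Eu_Z) = Eu_{\widetilde{Z}}$ on $\widetilde{X}$ away from the ramification locus — but since $p$ is a covering map (unramified), this holds everywhere on $\widetilde{Z}$. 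Then I apply the behavior of the Euler characteristic of constructible functions under finite (hence proper) pushforward: $\chi(\widetilde{X}, \pi^*\gamma) = \chi(\widetilde{X}, \gamma\circ\pi)$, and since $\pi$ is an unramified $d$-sheeted cover over the support, the pushforward satisfies $\pi_*(\pi^*\gamma) = d\cdot\gamma$ by the multiplicativity of Euler characteristic in covering maps. Combining with the identity $\chi(X, \pi_*\mu) = \chi(\widetilde{X}, \mu)$ from the constructible-functions formalism (valid since everything is compact and algebraic), we get $\chi(\widetilde{X}, Eu_{\widetilde{Z}}) = \chi(\widetilde{X}, \pi^*Eu_Z) = \chi(X, \pi_*\pi^*Eu_Z) = d\,\chi(X, Eu_Z)$, which is exactly what we want.

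Alternatively, and perhaps more transparently, I could argue directly on characteristic cycles using \cref{thm_Sabbah}: since $\pi$ is finite étale, the maps $u_1, u_2$ in the statement of that theorem are local isomorphisms, and one computes $CC(\pi_*\gamma) = u_{2*}u_1^*(CC(\gamma))$; tracing through, $\pi^* T^*_Z X = T^*_{\widetilde{Z}}\widetilde{X}$ as cycles (the conormal variety of $Z$ pulls back to the conormal variety of $\widetilde{Z}$ under an étale map), and the intersection number with the zero section multiplies by $d$ because $\pi$ restricted to a neighborhood of $\widetilde{Z}$ is a $d$-sheeted cover. The main obstacle — really the only subtlety — is the non-normality of $Z$: one must be careful that $Eu_Z$ and $T^*_Z X$ are defined intrinsically and that $p\colon \widetilde{Z}\to Z$ is genuinely an unramified covering (not merely generically so), which follows because $\widetilde{Z}$ is a connected component of $\pi^{-1}(Z)$ and $\pi$ itself is a finite covering map of the smooth ambient $\widetilde{X}$, so no ramification is introduced along $Z$. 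Once this is pinned down the computation is routine functoriality of $CC$ and $\chi$.
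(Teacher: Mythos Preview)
Your approach is essentially the same as the paper's: translate via \eqref{eq_Eu} to Euler characteristics of local Euler obstruction functions, use that the Euler obstruction is a local invariant so that it behaves well under the covering $p=\pi|_{\widetilde Z}$, and conclude $\chi(Eu_{\widetilde Z})=d\,\chi(Eu_Z)$. The paper's write-up is the one-line version of this: since $p$ is a degree-$d$ covering, $p_*(Eu_{\widetilde Z})=d\cdot Eu_Z$, hence $\chi(Eu_{\widetilde Z})=\chi(p_*Eu_{\widetilde Z})=d\,\chi(Eu_Z)$.

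One genuine slip to fix: in your displayed chain you switch from $p$ to $\pi$, writing $Eu_{\widetilde Z}=\pi^*Eu_Z$ and $\pi_*\pi^*\gamma=d\cdot\gamma$. Neither is correct in general: $\pi^*Eu_Z$ is supported on \emph{all} of $\pi^{-1}(Z)$, which may have several connected components, so $\pi^*Eu_Z\neq Eu_{\widetilde Z}$ unless $\widetilde Z=\pi^{-1}(Z)$; and $\pi_*\pi^*=\deg(\pi)\cdot\mathrm{id}$, where $\deg(\pi)$ need not equal $d=\deg(p)$. The fix is simply to run the argument with $p$ throughout (as you started to), obtaining $p_*Eu_{\widetilde Z}=p_*p^*Eu_Z=d\cdot Eu_Z$ and then $\chi(Eu_{\widetilde Z})=\chi(p_*Eu_{\widetilde Z})=d\,\chi(Eu_Z)$, exactly as in the paper.
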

	\begin{proof}
		Since $\pi|_{\widetilde{Z}}: \widetilde{Z}\to Z$ is a covering map of degree $d$, the pushforward of $Eu_{\widetilde{Z}}$ under $\pi|_{\widetilde{Z}}$ is equal to $d \cdot Eu_Z$. Hence 
		\[
		\chi\big(Eu_{\widetilde{Z}}\big)=\chi\big(\pi|_{\widetilde{Z}*}(Eu_{\widetilde{Z}})\big)=d\cdot \chi(Eu_Z).
		\]
		By \eqref{eq_Eu}, the desired equation follows. 
	\end{proof}
	
	By \cite[Lemma 8]{Sel}, $\sigma(\pi_1(X))$ has a torsion-free finite index subgroup. Replacing $X$ by the induced finite cover, without loss of generality, we can assume that $\sigma(\pi_1(X))$ is torsion free.   	
	Then as a subgroup, $\Gamma$ is also torsion free. 
	
	By the argument of \cite{AW}, we know that there is a finite period map $Z_{\norm}\to \Gamma\backslash D$. Since $\Gamma\backslash D$ has non-positive holomorphic bisectional curvature in the horizontal directions, we can deduce the fact that $(-1)^{\dim Z}\chi(Eu_{Z_{\norm}})\geq 0$. However, this is weaker than the inequality $(-1)^{\dim Z}\chi(Eu_{Z})\geq 0$. To achieve the latter inequality, we introduce a new constructible function $\delta$ on $Z_{\norm}$ as follows. 
	
	\begin{definition}\label{defn_delta}
		Let $Z$ be an algebraic variety and let $p: Z_{\norm}\to Z$ be the normalization map. Given any point $x\in Z_{\norm}$, we choose a small open neighborhood $U_x\subset Z_{\norm}$ of $x$. The value of $\delta$ at $x$ is defined to be the value of $(-1)^{\dim Z}Eu_{p(U_x)}$ at $p(x)$. 
	\end{definition}
	The function $\delta$ is constructible with respect to a stratification of the map $p: Z_{\norm}\to Z$ (see \cite[Chapter 1, 1.6 and 1.7]{GM}).
	
	\begin{example}
		Suppose that $Z$ is a projective curve with two singular points: a cusp $P$ and a node $Q$. Then the above defined function $\delta$ has value -1 on every point of $Z_{\norm}$ except at the preimage $P'$ of $P$, where the value is -2. In this case, $\delta$ is not CC-effective on $Z_{\norm}$, because
		\[
		\delta=(-1)^{\dim Z}1_{Z_{\norm}}-1_{P'},
		\]
		where $1_{Z_{\norm}}=Eu_{Z_{\norm}}$, since $Z_{\norm}$ is smooth. Such $Z$ could appear in Proposition \ref{prop_geq0}, and in such case, we will see that the image of $Z_{\norm}$ in the period domain $\Gamma\backslash D$ must at least have a cusp singularity at the image of $P'$. So the pushforward of $\delta$ in $\Gamma\backslash D$ will be CC-effective. 
	\end{example}
	
	\begin{lemma}\label{lemma_chi}
		Let $p: Z_{\norm}\to Z$ and the constructible function $\delta$ be as in Definition \ref{defn_delta}. If $U\subset Z_{\norm}$  is open, then $(p|_{U})_*(\delta)=(-1)^{\dim Z}Eu_{p(U)}$. 
		In particular, $p_*(\delta)=(-1)^{\dim Z}Eu_Z$ and $\chi(\delta)=(-1)^{\dim Z}\chi(Eu_Z)$. 
	\end{lemma}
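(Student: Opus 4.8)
The plan is to verify the stated identity pointwise on $p(U)$: since both $(p|_U)_*(\delta)$ and $(-1)^{\dim Z}Eu_{p(U)}$ are $\Z$-valued constructible functions on $p(U)$, it suffices to show they agree at an arbitrary $z\in p(U)$. First I would unwind the left-hand side. As $p$ is finite, so is $p|_U$, and the fibre $(p|_U)^{-1}(z)=p^{-1}(z)\cap U$ is a finite set $\{x_1,\dots,x_\ell\}$. The formula for the pushforward of a constructible function recalled in the Preliminaries then yields
\[
(p|_U)_*(\delta)(z)=\sum_{j=1}^{\ell}\delta(x_j)=(-1)^{\dim Z}\sum_{j=1}^{\ell}Eu_{p(U_{x_j})}(z),
\]
where $U_{x_j}\subset Z_{\norm}$ is a small neighbourhood of $x_j$; each summand is independent of the choice of $U_{x_j}$ because the local Euler obstruction depends only on the analytic germ. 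Since $Z_{\norm}$ is normal, hence locally irreducible, I would choose each $U_{x_j}$ irreducible, so that $p(U_{x_j})$ is an irreducible analytic germ at $z$, of dimension $\dim Z$ (as $p$ is finite).

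Next I would invoke the defining property of normalization: $p^{-1}(z)$ is in canonical bijection with the set of local analytic branches of $Z$ at $z$, and near $x_j$ the map $p$ realizes $U_{x_j}$ as the normalization of the corresponding branch $B_j$. Consequently the germ of $p(U)$ at $z$ is $\bigcup_{j=1}^{\ell}(p(U_{x_j}),z)=\bigcup_{j=1}^{\ell}(B_j,z)$, and the $B_j$ are precisely its distinct irreducible components, so $(p(U),z)$ is pure of dimension $\dim Z$. This identification requires a small verification that $p|_U$ is proper onto its image — automatic when $U=Z_{\norm}$, which is the case needed for the final clause, and more generally when $U$ is the preimage of an open subset of $Z$.

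The remaining ingredient is the additivity of the local Euler obstruction over branches: for a reduced analytic germ $W=W_1\cup\dots\cup W_\ell$, pure of dimension $d$, with distinct irreducible components $W_j$, one has $Eu_W=\sum_{j=1}^{\ell}Eu_{W_j}$. I would prove this at the level of characteristic cycles. The smooth locus $W_{\reg}$ contains $\bigsqcup_j\big(W_{j,\reg}\setminus\bigcup_{k\neq j}W_k\big)$, which is open and dense in each $W_j$; hence $\overline{T^*_{W_{\reg}}X}=\bigcup_j T^*_{W_j}X$, i.e.\ $T^*_W X=\sum_j T^*_{W_j}X$ as conic Lagrangian cycles, each with multiplicity one. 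Since $CC\colon F(X)\to L(X)$ is an isomorphism carrying $Eu_W$ to $(-1)^{d}T^*_W X$ and $Eu_{W_j}$ to $(-1)^{d}T^*_{W_j}X$, this forces $Eu_W=\sum_j Eu_{W_j}$. Applying this to $W=(p(U),z)$ gives $Eu_{p(U)}(z)=\sum_{j=1}^{\ell}Eu_{p(U_{x_j})}(z)$, which combined with the first display yields $(p|_U)_*(\delta)(z)=(-1)^{\dim Z}Eu_{p(U)}(z)$ for all $z\in p(U)$ — the first assertion. Taking $U=Z_{\norm}$ gives $p_*(\delta)=(-1)^{\dim Z}Eu_Z$, and since $Z$ is compact and $p$ is a (finite, hence proper) morphism, the invariance of the Euler characteristic under pushforward recalled in the Preliminaries gives $\chi(\delta)=\chi(Z_{\norm},\delta)=\chi(Z,p_*(\delta))=(-1)^{\dim Z}\chi(Eu_Z)$.

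I expect the main obstacle to be the structural facts of the middle and last paragraphs: correctly matching the local branches of $p(U)$ at $z$ with the fibre $p^{-1}(z)\cap U$, and establishing the additivity of the local Euler obstruction over equidimensional irreducible components. Everything else is bookkeeping with the pushforward formula and the standard dictionary between constructible functions, local Euler obstructions, and characteristic cycles.
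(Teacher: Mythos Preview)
Your proposal is correct and follows essentially the same approach as the paper: the paper's proof is a one-line appeal to the definition of $\delta$ together with the fact that the local Euler obstruction of a reducible analytic variety is the sum of the local Euler obstructions of its irreducible components, and your argument is precisely a careful unpacking of this, including a proof of the additivity fact via characteristic cycles. Your observation about properness of $p|_U$ for general open $U$ is a valid caveat that the paper leaves implicit; as you note, it is harmless in the applications (where $U=Z_{\norm}$ or a saturated open set).
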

	\begin{proof}
		The lemma follows from the definition of $\delta$ and the fact that for a reducible analytic variety, its Euler obstruction function equals to the sum of the Euler obstruction of every irreducible components. 
	\end{proof}

	Let $\pi: \widetilde{X}\to X$ be the covering map induced by $\ker(\sigma)$. %Let $\widetilde{Z}$ be a connected component of $\pi^{-1}(Z)$. 
	Let $\pi_{Z_{\norm}}: \widetilde{Z}_{\norm}\to Z_{\norm}$ be the covering map induced by $\ker[\pi_1(Z_{\norm})\to \pi_1(X)\xrightarrow{\sigma}\GL(N, \C)]$. Then the map $Z_{\norm}\to X$ lifts to a map $\widetilde{Z}_{\norm}\to \widetilde{X}$. Let $\widetilde{Z}$ be the image of $\widetilde{Z}_{\norm}$ in $\widetilde{X}$. Then we have the following commutative diagram:
	%Since $\pi|_{\widetilde{Z}}: \widetilde{Z}\to Z$ is a covering map, we can find a lifting map $\tilde{p}: \widetilde{Z}_{\norm}^{\univ}\to \widetilde{Z}$ such that the following diagram commutes. %\textcolor{blue}{We should replace the universal cover of $Z_{\norm}$ by the smaller one induced by the kernel of $\pi_1(Z_{\norm})\to \pi_1(X)$ to be consistent with the arguments in next section. }
	\[
	\xymatrix{
		\widetilde{Z}_{\norm}\ar[d]^{\pi_{Z_{\norm}}}\ar[r]^{\tilde{p}}&\widetilde{Z}\ar[d]^{\pi_{Z}}\ar[r]^{\tilde\iota}&\widetilde{X}\ar[d]^{\pi}\\
		Z_{\norm}\ar[r]^{p}& Z\ar[r]^{\iota}&  X.
	}
	\]
	It follows from definition that $\widetilde{Z}_{\norm}$ is a connected component of the fiber product $Z_{\norm}\times_{X} \widetilde{X}$. Since $\iota\circ p: Z_{\norm}\to X$ is a finite morphism, so are $\tilde\iota\circ \tilde{p}: \widetilde{Z}_{\norm}\to \widetilde{X}$ and $\tilde{p}: \widetilde{Z}_{\norm}\to \widetilde{Z}$ (of analytic varieties). 
	\begin{remark}
		The map $\pi_Z$ is a covering map over the Zariski open subset of $Z$ where it is locally irreducible. For example, if $Z$ is curve, everywhere smooth except at one nodal singular point, then $\widetilde{Z}$ is a covering space of either $Z$ or $Z_{\norm}$ depending on whether the two compositions $\pi_1(Z_{\norm})\to \pi_1(X)\xrightarrow{\sigma} \GL(N, \C)$ and $\pi_1(Z)\to \pi_1(X)\xrightarrow{\sigma}  \GL(N, \C)$ have the same image or not. 
	\end{remark}
	\begin{lemma}\label{lemma_tildep}
		Let $\tilde{\delta}=\pi_{Z_{\norm}}^*(\delta)$. Then $\tilde{p}_*(\tilde{\delta})$ is CC-effective. 
	\end{lemma}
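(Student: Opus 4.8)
The plan is to show that $\tilde{p}\colon \widetilde{Z}_{\norm}\to \widetilde{Z}$ is the normalization of $\widetilde{Z}$ and that, under this identification, $\tilde{\delta}$ is exactly the constructible function \cref{defn_delta} attaches to $\widetilde{Z}$; once this is known, \cref{lemma_chi} applied to $\tilde p$ with $U=\widetilde Z_{\norm}$ gives $\tilde{p}_*(\tilde{\delta})=(-1)^{\dim \widetilde Z}Eu_{\widetilde Z}$, which is CC-effective by the very definition of CC-effectivity because $\widetilde Z$ is irreducible (a single term with multiplicity $1$). So I would reduce the statement to two claims: (i) $\widetilde Z_{\norm}$ is normal and irreducible and $\tilde p$ is finite and birational, hence $\widetilde Z_{\norm}=(\widetilde Z)_{\norm}$; and (ii) $\tilde\delta=\pi_{Z_{\norm}}^{*}(\delta)$ is, pointwise, the function of \cref{defn_delta} for the map $\tilde p$.

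For (i), I would first recall that $\widetilde Z_{\norm}$ is a connected component of $Z_{\norm}\times_X \widetilde X$; since $\widetilde X\to X$ is a covering map, hence \'etale, and $Z_{\norm}$ is normal, $Z_{\norm}\times_X\widetilde X$ is normal, so its connected component $\widetilde Z_{\norm}$ is normal and connected, hence irreducible, and its image $\widetilde Z$ under the finite map $\tilde\iota\circ\tilde p$ is a closed irreducible subvariety of $\widetilde X$. Then I would check that $\tilde p$ is birational as follows: over a normal point $z$ of $Z$ the fiber $p^{-1}(z)=\{x_0\}$ is a single point, so for $\tilde z\in\widetilde Z$ over $z$ every preimage under $\tilde p$ lies in $\pi_{Z_{\norm}}^{-1}(x_0)$, which is a torsor under $\Gamma=\sigma(\im[\pi_1(Z_{\norm})\to\pi_1(X)])$; the map $\tilde\iota\circ\tilde p$ carries it $\Gamma$-equivariantly into the $\sigma(\pi_1(X))$-torsor $\pi^{-1}(z)$, hence injectively, since $\sigma(\pi_1(X))$ acts freely on the latter, so $\tilde p^{-1}(\tilde z)$ is a single point. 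A finite generically injective morphism from a normal variety being the normalization, (i) follows.

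For (ii), the idea is to perform the matching upstairs. Given $\tilde x\in\widetilde Z_{\norm}$, set $x=\pi_{Z_{\norm}}(\tilde x)$, $z=p(x)$, $\tilde z=\tilde p(\tilde x)$, and choose a neighborhood $\widetilde U_{\tilde x}$ small enough that $\pi_{Z_{\norm}}$ maps it biholomorphically onto a neighborhood $U_x$ of $x$; since $\pi\colon\widetilde X\to X$ is \'etale it is also a biholomorphism from a neighborhood of $\tilde z$ onto a neighborhood of $z$. Chasing the commutative diagram, $\pi\circ(\tilde\iota\circ\tilde p)|_{\widetilde U_{\tilde x}}=\iota\circ p\circ\pi_{Z_{\norm}}|_{\widetilde U_{\tilde x}}$ has image $p(U_x)$, so $\pi$ restricts to an isomorphism of pointed analytic germs $(\tilde p(\widetilde U_{\tilde x}),\tilde z)\xrightarrow{\sim}(p(U_x),z)$. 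Invoking the intrinsic local nature of the local Euler obstruction together with $\dim\widetilde Z=\dim Z$, I would then get
\[
(-1)^{\dim\widetilde Z}Eu_{\tilde p(\widetilde U_{\tilde x})}(\tilde z)=(-1)^{\dim Z}Eu_{p(U_x)}(z)=\delta(x)=\tilde\delta(\tilde x),
\]
which is exactly the assertion of (ii). Combining (i), (ii) and \cref{lemma_chi} then finishes the proof.

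The hard part will be (ii): the map $\pi_Z\colon\widetilde Z\to Z$ is \emph{not} a covering map over the locus where $Z$ fails to be locally irreducible, so one cannot transport neighborhoods of $\widetilde Z$ directly to neighborhoods of $Z$. The point is that the comparison of germs can already be carried out on $\widetilde Z_{\norm}$ and $Z_{\norm}$, where the vertical maps $\pi_{Z_{\norm}}$ and $\pi$ are genuine local biholomorphisms, and then pushed down using only the locality and embedding-independence of $Eu$.
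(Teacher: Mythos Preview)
Your argument is correct and follows a different path from the paper. The paper pushes forward to $\widetilde X$ and invokes a base-change identity $(\tilde\iota\circ\tilde p)_*(\tilde\delta)=\pi^*\big((\iota\circ p)_*\delta\big)$ near $\widetilde Z$, then uses Lemma~\ref{lemma_chi} on $X$ together with the covering map $\pi$ to conclude. You instead identify $\tilde p\colon\widetilde Z_{\norm}\to\widetilde Z$ as the normalization of $\widetilde Z$ and $\tilde\delta$ as the function of Definition~\ref{defn_delta} attached to $\widetilde Z$, so that Lemma~\ref{lemma_chi} applied directly upstairs yields the sharper statement $\tilde p_*(\tilde\delta)=(-1)^{\dim\widetilde Z}Eu_{\widetilde Z}$. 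Your route is longer but buys two things: it produces an explicit formula rather than mere CC-effectivity, and it sidesteps a delicate point in the paper's base-change step. Since $\widetilde Z_{\norm}$ is only one connected component of $Z_{\norm}\times_X\widetilde X$, the relevant square is not Cartesian; over a point of $Z$ that is not locally irreducible the map $(\tilde\iota\circ\tilde p)^{-1}(\tilde q)\to(\iota\circ p)^{-1}(\pi(\tilde q))$ is injective but need not be surjective (different local branches of $Z$ may lift to different $\Gamma$-cosets in $\pi^{-1}(\pi(\tilde q))$, cf.\ the remark after the diagram that $\pi_Z$ need not be a covering there), so the asserted equality of constructible functions can fail pointwise. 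Your computation in~(ii), carried out via the genuine local biholomorphisms $\pi_{Z_{\norm}}$ and $\pi$ and the intrinsic nature of $Eu$, is exactly what is needed to handle this cleanly.
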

	\begin{proof}
		Since $\iota$ is a closed embedding, it suffices to show that $(\tilde{\iota}\circ \tilde{p})_*(\tilde{\delta})$ is CC-effective. By definition, near the image of $\tilde{\iota}\circ \tilde{p}$, $(\iota\circ \tilde{p})_*(\tilde{\delta})$ is equal to $\pi^*((\iota\circ p)_*\delta)$. Since $\pi$ is a covering map, it suffices to show that $(\iota\circ p)_*\delta$ is CC-effective. By Lemma \ref{lemma_chi}, $p_*\delta$ is CC-effective. Since $\iota$ is a closed embedding, $(\iota\circ p)_*\delta$ is also CC-effective.
	\end{proof}
	
	%\begin{lemma}
	%Given any point $x\in \widetilde{Z}_{\norm}$, denote the analytic germ of $\widetilde{Z}_{\norm}$ at $x$ by $\widetilde{Z}_{\norm, x}$. Then $\tilde{p}(\widetilde{Z}_{\norm, x})$ is an irreducible component of the analytic germ $\widetilde{Z}_{\tilde{p}(x)}$. Moreover, $\tilde{p}(\widetilde{Z}_{\norm, x})\cong \pi_Z\circ \tilde{p}(\widetilde{Z}_{\norm, x})$ and $\pi_Z\circ \tilde{p}(\widetilde{Z}_{\norm, x})$ is an irreducible component of the analytic germ $Z_{\pi_Z\circ\tilde{p}(x)}$.
	%\end{lemma}
	%\begin{proof}
	%Denote the normalization of $\widetilde{Z}$ by $(\widetilde{Z})_{\norm}$. Then by the universal property of normalization, the maps $\tilde{p}: \widetilde{Z}_{\norm}\to \widetilde{Z}$ and $(\widetilde{Z})_{\norm}\to Z$ factor as 
	%\[
	%\widetilde{Z}_{\norm}\xrightarrow{\tilde{p}_1} (\widetilde{Z})_{\norm}\xrightarrow{\tilde{p}_2}  \widetilde{Z}\quad \text{and}\quad (\widetilde{Z})_{\norm}\to {Z}_{\norm}\to Z
	%\]
	%respectively. 
	%Since $\pi_{Z_{\norm}}:\widetilde{Z}_{\norm}\to {Z}_{\norm}$ is a covering map, the maps $\widetilde{Z}_{\norm}\to (\widetilde{Z})_{\norm}$ and $(\widetilde{Z})_{\norm}\to {Z}_{\norm}$ must also be covering maps. Thus, $\tilde{p}_1(\widetilde{Z}_{\norm, x})= (\widetilde{Z})_{\norm, \tilde{p}_1(x)}$. Since $(\widetilde{Z})_{\norm}\to \widetilde{Z}$ is the normalization map, we can deduce the first statement of the lemma. 
	%\end{proof}

	By the definition of $\widetilde{X}$, the pullback of $L_\sigma$ to $\widetilde{X}$ becomes a trivial local system. Thus, the $\bC$-VHS structure on $L_\sigma$ induces a period map 
	\[
	\widetilde{\phi}: \widetilde{X}\to D.
	\]

	%Fixing any lift of the composition $Z_{\norm}\to Z\to X$ to the universal cover $\widetilde{Z}_{\norm}^{univ}$ of $Z_{\norm}$, 
	Denote the pullback of $\widetilde{\phi}$ to $\widetilde{Z}$ by $\widetilde{\phi}_Z: \widetilde{Z}\to D$. 
	Now, the deck transformation group of the normal covering map $\widetilde{Z}_{\norm}\to {Z}_{\norm}$ is equal to $\Gamma$, and $\Gamma$ acts on $D$ via the monodromy action of the $\bC$-VHS $L_{\sigma}$. Thus, $\Gamma$ acts equivariantly on the composition $\tilde{\phi}\circ\tilde{p}: \widetilde{Z}_{\norm}\to  D$, and taking quotient by the $\Gamma$-action, we have the commutative diagram
	\begin{equation}\label{eq_diagram}
		\xymatrix{
			\widetilde{Z}_{\norm}\ar[d]\ar[r]^{\tilde{p}} &\tilde{Z}\ar[r]^{\tilde{\phi}_Z} &D\ar[d]^{\pi_D}\\
			Z_{\norm}\ar[rr]^{\phi}&& \Gamma\backslash D.
		}
	\end{equation}
	Since $\Gamma$ is torsion free and discrete, we know that $\Gamma\backslash D$ is a complex manifold. 
	
	Since the pullback of $\sigma$ to $\pi_1(Z_{\norm})$ is large, by \cite[Proof of Theorem 1.9]{AW}, we have the following.
	\begin{lemma}\label{lemma_finite}
		The period map $\phi: Z_{\norm}\to \Gamma\backslash D$ is a finite morphism.  \qed
	\end{lemma}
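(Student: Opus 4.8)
The plan is to deduce finiteness from two ingredients: properness, which will be automatic because $Z_{\norm}$ is compact, and quasi-finiteness, which I would extract from largeness together with the compactness of the isotropy groups in the period domain. Recall that a proper holomorphic map with discrete fibres is a finite morphism, so these two points suffice.

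First I would observe that $Z_{\norm}$, being the normalization of a closed subvariety of the projective manifold $X$, is a projective---hence compact---variety, and that $\Gamma\backslash D$ is a Hausdorff complex manifold since $\Gamma$ is discrete and torsion free. Any continuous map from a compact space to a Hausdorff space is proper, so $\phi$ is proper, and it remains only to rule out positive-dimensional fibres.

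Next I would argue by contradiction. Suppose some fibre of $\phi$ has a positive-dimensional irreducible component $W$; by Chow's theorem $W$ is a closed algebraic subvariety of $Z_{\norm}$. Using the square \eqref{eq_diagram}, $\phi$ is covered by the $\Gamma$-equivariant holomorphic map $\tilde{\phi}_Z\circ\tilde{p}\colon\widetilde{Z}_{\norm}\to D$, where the deck group of $\widetilde{Z}_{\norm}\to Z_{\norm}$ is $\Gamma=\sigma(\mathrm{Im}[\pi_1(Z_{\norm})\to\pi_1(X)])$ acting on $D$ through $\sigma$. I would pick a connected component $\widetilde{W}$ of the preimage of $W$ in $\widetilde{Z}_{\norm}$: it lands in a single fibre $\pi_D^{-1}(pt)$, which is one $\Gamma$-orbit in $D$, and since every isotropy subgroup of the transitive $\GL(V,S)$-action on $D$ is compact (it is a product of unitary groups) while $\Gamma$ is discrete and torsion free, $\Gamma$ acts freely and properly discontinuously, so that orbit is discrete. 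Hence $\tilde{\phi}_Z\circ\tilde{p}$ is constant on the connected set $\widetilde{W}$, with some value $d_0\in D$. By equivariance the stabilizer of $\widetilde{W}$ in $\Gamma$---which is $\sigma(\mathrm{Im}[\pi_1(W_{\norm})\to\pi_1(X)])$---fixes $d_0$, hence lies inside the compact isotropy group of $d_0$; being discrete and torsion free it must be trivial, so $\sigma(\mathrm{Im}[\pi_1(W_{\norm})\to\pi_1(X)])$ is finite. But largeness of the pullback of $\sigma$ to $\pi_1(Z_{\norm})$ forces this group to be infinite because $\dim W>0$---a contradiction. Therefore $\phi$ is quasi-finite, and being proper it is finite.

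The step I expect to be the real obstacle is the covering-space bookkeeping in the previous paragraph: carefully matching the stabilizer of the contracted lift $\widetilde{W}$ with the monodromy image $\sigma(\mathrm{Im}[\pi_1(W_{\norm})\to\pi_1(X)])$, and verifying that ``the period map is constant on a lift of $W$'' really does translate into ``the monodromy of $L_\sigma|_{W_{\norm}}$ fixes a point of $D$'', i.e.\ preserves a single Hodge structure. This is exactly the content of \cite[Proof of Theorem 1.9]{AW}; once it is in place, everything else is formal.
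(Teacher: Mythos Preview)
Your argument is correct and is essentially the argument the paper defers to in \cite[Proof of Theorem 1.9]{AW}: properness is automatic from compactness of $Z_{\norm}$, and quasi-finiteness comes from the fact that a positive-dimensional fibre would force the monodromy along it to fix a point of $D$, hence land in a compact isotropy group, contradicting largeness. One small imprecision worth tightening: the stabilizer of $\widetilde W$ in $\Gamma$ is the image of $\pi_1(W)\to\Gamma$, not of $\pi_1(W_{\norm})\to\Gamma$; but since the normalization map gives $\mathrm{Im}[\pi_1(W_{\norm})]\subset\mathrm{Im}[\pi_1(W)]$, the triviality of the former follows from that of the latter, and your contradiction with largeness goes through unchanged.
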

	\begin{proposition}\label{prop_delta}
		Let $\delta$ be the constructible function as in Definition \ref{defn_delta}. Then the pushforward $\phi_{*}(\delta)$ is CC-effective. 
	\end{proposition}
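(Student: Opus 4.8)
The plan is to verify CC-effectivity of $\phi_*(\delta)$ locally on $\Gamma\backslash D$ via \cref{lemma_local}, using the finiteness of $\phi$ to localize around a single point of a fibre, and then to show that over a small enough neighbourhood the period map actually descends from $Z_{\norm}$ to $Z$, so that \cref{prop_AMSS} applies directly.

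I would fix $y\in\Gamma\backslash D$. Since $\phi$ is finite by \cref{lemma_finite}, write $\phi^{-1}(y)=\{x_1,\dots,x_k\}$ and choose pairwise disjoint neighbourhoods $U_i\ni x_i$ and $V\ni y$ with $\phi^{-1}(V)=\bigsqcup_i U_i$ and each $\phi|_{U_i}\colon U_i\to V$ finite, so that $\phi_*(\delta)|_V=\sum_i(\phi|_{U_i})_*(\delta|_{U_i})$ and it suffices to treat one summand. Shrinking $U_i$, I may assume $U_i$ is connected, hence irreducible ($Z_{\norm}$ being normal), and that $p|_{U_i}$ maps $U_i$ homeomorphically onto an irreducible analytic variety $p(U_i)\subseteq Z$ with $\dim p(U_i)=\dim Z$ (the normalization separates branches, so a small $U_i$ maps onto a single branch of $Z$). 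By the germ-invariance of the local Euler obstruction, \cref{defn_delta} then yields $\delta|_{U_i}=(-1)^{\dim Z}\,Eu_{p(U_i)}\circ(p|_{U_i})$, hence $(p|_{U_i})_*(\delta|_{U_i})=(-1)^{\dim Z}Eu_{p(U_i)}$.

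The crucial step --- and the one I expect to be the main obstacle --- is to factor, after further shrinking, $\phi|_{U_i}=\psi_i\circ(p|_{U_i})$ with $\psi_i\colon p(U_i)\to V$ a \emph{morphism of analytic varieties} (it is then automatically finite, as $\psi_i\circ(p|_{U_i})=\phi|_{U_i}$ is finite and $p|_{U_i}$ is onto). The point is that, although $\phi$ need not descend to $Z$ globally, the period map is an honest holomorphic map $\widetilde\phi\colon\widetilde X\to D$ on the \emph{smooth} covering $\widetilde X$. Concretely, let $\widetilde U_i$ be the component of $\pi_{Z_{\norm}}^{-1}(U_i)$ through a chosen lift $\tilde x_i$ of $x_i$, put $\widetilde B_i:=\tilde p(\widetilde U_i)\subseteq\widetilde Z\subseteq\widetilde X$, and shrink $U_i$ so that $\widetilde B_i$ lies in a neighbourhood of $\tilde p(\tilde x_i)$ in $\widetilde X$ on which $\pi$ is a biholomorphism; then $\pi$ restricts to an isomorphism of analytic varieties $\widetilde B_i\xrightarrow{\ \sim\ }p(U_i)$, and the restriction $\widetilde\phi|_{\widetilde B_i}$ is a morphism of analytic varieties. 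I would set $\psi_i:=\pi_D\circ(\widetilde\phi|_{\widetilde B_i})\circ(\pi|_{\widetilde B_i})^{-1}$, and the identities $\phi\circ\pi_{Z_{\norm}}=\pi_D\circ\widetilde\phi_Z\circ\tilde p$, $\pi\circ\tilde p=p\circ\pi_{Z_{\norm}}$, and $\widetilde\phi_Z=\widetilde\phi|_{\widetilde Z}$ give $\phi|_{U_i}=\psi_i\circ(p|_{U_i})$.

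Granting this, $(\phi|_{U_i})_*(\delta|_{U_i})=\psi_{i\,*}\big((-1)^{\dim p(U_i)}Eu_{p(U_i)}\big)$ is CC-effective by \cref{prop_AMSS}; summing over $i$ and invoking \cref{lemma_local} finishes the proof. The delicate bookkeeping is to make all the successive shrinkings of $U_i$ compatible, to keep track of which branch of $\widetilde Z$ the piece $\widetilde B_i$ represents, and to ensure $\widetilde B_i$ (hence $\psi_i$) is well placed, i.e.\ that $\psi_i(p(U_i))\subseteq V$.
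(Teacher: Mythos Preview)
Your argument is correct and rests on the same key ingredient as the paper's, namely \cref{prop_AMSS} applied to a finite morphism out of (a piece of) the non-normal $Z$, where the period map is genuinely defined because it comes by restriction from the ambient smooth $\widetilde{X}$. The execution, however, is different. The paper avoids all of your local bookkeeping by passing once and for all to the cover: it pulls $\delta$ back to $\widetilde{Z}_{\norm}$, invokes \cref{lemma_tildep} (which is the global cover-side analogue of your identity $(p|_{U_i})_*(\delta|_{U_i})=(-1)^{\dim Z}Eu_{p(U_i)}$) to see that $\tilde{p}_*(\tilde{\delta})$ is CC-effective on $\widetilde{Z}$, applies \cref{prop_AMSS} to the single finite map $\tilde{\phi}_Z\colon\widetilde{Z}\to D$, and then descends using that \eqref{eq_diagram} (with $\widetilde{Z}$ omitted) is Cartesian. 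Your route trades this Cartesian-square descent for explicit local factorizations $\phi|_{U_i}=\psi_i\circ(p|_{U_i})$ built by hand from a local sheet of the cover; this is more laborious (you must track the compatibility of successive shrinkings and the choice of lift), but it has the virtue of making the underlying geometry---that near each point the period map is literally the restriction to a branch of $Z$ of a holomorphic map defined on the smooth ambient space---completely explicit. Both approaches are local in spirit (the paper's use of \cref{lemma_tildep} is itself a local check lifted to the cover), so neither is more general; the paper's is just more streamlined.
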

	\begin{proof}
		The proof will be based on the commutative diagram \eqref{eq_diagram}. 
		
		Let $\tilde{\delta}$ be the pullback of $\delta$ to the covering space $\widetilde{Z}_{\norm}$. Notice that ignoring $\widetilde{Z}$, \eqref{eq_diagram} is 
		%replace $D$ and $\Gamma\backslash D$ by $\im(\tilde{\phi}_Z\circ \tilde{p})$ and $\im(\phi)$ in \eqref{eq_diagram}, then 
		a Cartesian square. Thus, it suffices to show that $(\tilde{\phi}_Z\circ \tilde{p})_*(\tilde{\delta})$ is CC-effective. By Lemma \ref{lemma_tildep}, $\tilde{p}_*(\tilde{\delta})$ is CC-effective. Since $\tilde{\phi}_Z$ is a finite morphism, by Proposition \ref{prop_AMSS},  $(\tilde{\phi}_Z\circ \tilde{p})_*(\tilde{\delta})$ is CC-effective. 
	\end{proof}
	
	\begin{remark}\label{remark_horizontal}
		%In the tangent bundle $TD$ there is a subbundle $T^hD$ called the horizontal subbundle (see e.g., \cite{CMP, P}). 
		The horizontal subbundle $T^{-1, 1}D$ defined in \cref{sec:period} is preserved by the $\Gamma$-action, and hence descends to a subbundle $T^{-1, 1}(\Gamma\backslash D)$. A subvariety $Y$ of $\Gamma\backslash D$ is called horizontal if the image of $TY_{\reg}$ in $T(\Gamma\backslash D)|_{Y_{\reg}}$ is contained in $T^{-1, 1}(\Gamma\backslash D)|_{Y_{\reg}}$. The image of a period map is always horizontal. It is proved in \cite[Corollary 5.4]{AW} that the subvariety of a horizontal variety is also horizontal. 
	\end{remark}
	The following proposition was proved using the curvature property of the period domain. 
	\begin{proposition}\cite[Proposition 6.1]{AW}\label{prop_AW}
		Let $Y$ be a horizontal compact irreducible analytic subvariety of $\Gamma\backslash D$. Then 
		\[
		T^*_Y \Gamma\backslash D\cdot T^*_{\Gamma\backslash D} \Gamma\backslash D\geq 0
		\]
		as intersection number in $T^*\Gamma\backslash D$. Equivalently, $(-1)^{\dim Y}\chi(Eu_Y)\geq 0$. 
	\end{proposition}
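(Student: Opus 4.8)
The plan is to turn the inequality into a signed count of critical points of a generic function and then to fix the signs using the negativity of the Hodge metric along the horizontal directions. Write $M:=\Gamma\backslash D$; since $\Gamma$ is torsion-free and discrete, $M$ is a complex manifold, and it carries the Hodge metric $h$, whose holomorphic sectional curvature along the horizontal subbundle $T^{-1,1}D$ — which descends to $M$, as recalled in \cref{remark_horizontal} — is bounded above by a negative constant, by Griffiths' curvature computation. By \eqref{eq_Eu} we have $T^*_YM\cdot T^*_MM=(-1)^{\dim Y}\chi(Eu_Y)$, so it suffices to prove $(-1)^{\dim Y}\chi(Eu_Y)\ge 0$.

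First I would compute $\chi(Eu_Y)$ Morse-theoretically. Fix a Whitney stratification of $M$ adapted to $Y$ and a generic smooth function $f$ on a neighbourhood of $Y$ whose differential $df$, viewed as a section of $T^*M$, is transverse to $CC(Eu_Y)=(-1)^{\dim Y}T^*_YM$. By genericity and the compactness of $Y$, the intersection $\Gamma_{df}\cap T^*_YM$ is a finite set of points, each lying either over $Y_{\reg}$ or over a singular stratum of $Y$. Deforming the zero section $T^*_MM$ to $\Gamma_{df}$ through the graphs $\Gamma_{t\,df}$ and applying \cref{thm_index} to $Eu_Y$, one gets
\[
\chi(Eu_Y)=\sum_{p}\mu_p,
\]
where $\mu_p=(-1)^{\mathrm{ind}_p}$ with $\mathrm{ind}_p$ the Morse index of $f|_{Y_{\reg}}$ at $p$ when $p$ lies over $Y_{\reg}$, and $\mu_p$ is the corresponding stratified Morse contribution (the normal Morse datum twisted by the local Euler obstruction) when $p$ lies over $\sing(Y)$. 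It then remains to prove $(-1)^{\dim Y}\sum_p\mu_p\ge 0$.

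The curvature input controls the smooth part. Since $Y$ is horizontal, $TY_{\reg}$ is a holomorphic subbundle of $T^{-1,1}D|_{Y_{\reg}}$, so by the Gauss equation the induced K\"ahler metric on $Y_{\reg}$ has holomorphic sectional curvature $\le -c<0$. I would then run a Gromov-type argument: such strong negativity makes $Y_{\reg}$ (complete-)K\"ahler hyperbolic, its $L^2$-cohomology is concentrated in the middle degree, and an $L^2$-index theorem forces $\sum_{p\ \text{over}\ Y_{\reg}}\mu_p$ to be $(-1)^{\dim Y}$ times a nonnegative integer — equivalently, $f$ can be arranged so that each such $\mathrm{ind}_p$ is congruent to $\dim_{\bC}Y$ modulo $2$.

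The main obstacle is the contribution over $\sing(Y)$: the curvature estimate only sees $Y_{\reg}$, whereas $Eu_Y$ genuinely records extra data along the singular strata. I would handle this by induction on $\dim Y$, using the fact from \cref{remark_horizontal} that every subvariety of a horizontal variety is again horizontal: each closed stratum $S\subset\sing(Y)$ is then itself a horizontal subvariety of $M$, so its tangential contribution is controlled by the inductive hypothesis, while the transverse (normal Morse) datum of $Y$ along $S$ should be governed by the same negativity estimate applied to the conormal directions of $S$ inside $T^{-1,1}D$. Turning this stratified bookkeeping into a clean argument — rather than the curvature estimate on $Y_{\reg}$ itself — is where the real work lies.
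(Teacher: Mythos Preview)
The paper does not give its own proof of this proposition; it is quoted from \cite[Proposition~6.1]{AW} with only the remark that it ``was proved using the curvature property of the period domain.'' The argument in \cite{AW} is quite different from your proposal: one rewrites the intersection number $T^*_Y M\cdot T^*_M M$ as a top self-intersection of the tautological class $c_1(\cO(1))$ on the projectivized conormal variety $\bP(T^*_Y M)\subset \bP(T^*M)$, and then uses Griffiths' computation that the Hodge metric gives $T^*M$ semipositive curvature (equivalently, nonpositive holomorphic bisectional curvature on $TM$) in the horizontal directions. Since $Y$ is horizontal, the induced metric on $\cO(1)|_{\bP(T^*_Y M)}$ has semipositive curvature, so the integral is nonnegative. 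This treats smooth and singular $Y$ uniformly, with no Morse theory and no stratification.

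Your proposal has genuine gaps. The step invoking K\"ahler hyperbolicity does not go through: Gromov's $L^2$-vanishing theorem needs a \emph{compact} K\"ahler manifold whose K\"ahler form is $d(\text{bounded})$ on the universal cover, and neither hypothesis is available for $Y_{\reg}$ --- it is not compact when $Y$ is singular, and a negative upper bound on holomorphic sectional curvature does not by itself yield $d$-boundedness. Even where an $L^2$ argument applies, it controls the total Euler characteristic, not the parity of individual Morse indices; your ``equivalently, $f$ can be arranged so that each $\mathrm{ind}_p\equiv\dim_{\C}Y\pmod 2$'' is unjustified and generally false. Moreover, the partial sum $\sum_{p\in Y_{\reg}}\mu_p$ you isolate is not the Euler characteristic of anything to which an index theorem applies --- it depends on the choice of $f$ and there is no reason it should be separately nonnegative. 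The induction on strata you sketch for the remaining terms faces the same obstacle: the normal Morse data along a singular stratum encode the local topology of $Y$, not the curvature of $M$. You correctly identify this as where the real work lies; the approach in \cite{AW} simply sidesteps it.
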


	\begin{proof}[Proof of Proposition \ref{prop_geq0}]
		By Lemma \ref{lemma_chi}, 
		\begin{equation}\label{eq_01}
			(-1)^{\dim Z}\chi(Eu_Z)=\chi(\delta).
		\end{equation}
		Since $\phi: Z_{\norm}\to \Gamma\backslash D$ is a finite morphism, 
		\begin{equation}\label{eq_02}
			\chi(\delta)=\chi(\phi_*(\delta)).
		\end{equation}
		Moreover, the support of $\phi_*(\delta)$ is compact. 
		By Proposition \ref{prop_delta}, we know that 
		\begin{equation}\label{eq_Yi}
			\phi_*(\delta)=\sum_{1\leq i\leq m}(-1)^{\dim Y_i}n_i Eu_{Y_i}
		\end{equation}
		for irreducible compact subvarieties $Y_i$ of $\Gamma\backslash D$ and $n_i>0$. By Remark \ref{remark_horizontal}, $\phi(Z_{\norm})$ is horizontal. Since all $Y_i$ are contained in $\phi(Z_{\norm})$, they are also horizontal. Thus, by Proposition~\ref{prop_AW}, $(-1)^{\dim Y_i}\chi(Eu_{Y_i})\geq 0$ for all $i$. Therefore, by \eqref{eq_Yi}, $\chi(\phi_*(\delta))\geq 0$. By \eqref{eq_01} and \eqref{eq_02}, $(-1)^{\dim Z}\chi(Eu_Z)\geq 0$.
	\end{proof}
	
	\subsection{Proof of Theorem \ref{thm_main} in the case when $K=\C$ and $\rho$ is semisimple}\label{sec:semisimple}
	Let $\rho: \pi_1(X)\to \GL(r, \C)$ be a large and semisimple representation. 
	The proof of the reductive Shafarevich conjecture as in \cite{DY23} gives us the desired multivalued one-forms and $\C$-VHS as in the following proposition. 
	%Using results from the reductive Shafarevich conjecture proven in \cite{DY23}, we will show that there exist a multivalued one-form $\eta$ on $X$ and  a $\bC$-VHS $\sigma$ on $X$ such that,  for  any positive-dimensional subvariety $Z$ of $X$, either the restriction of $\eta$ to  $Z$ is non-trivial, or $p^*\sigma$ is a large representation with discrete monodromy, where $p:Z_{\rm norm}\to Z$ is the normalization. 
	
	\begin{proposition}\label{prop:Shafarevich}
		Let $X$ be a projective manifold and let $\rho:\pi_1(X)\to \GL(r,\bC)$ be a large and semisimple representation.   Then there exists a  semisimple representation $\sigma:\pi_1(X)\to \GL(N,\bC)$ on $X$ underlying a $\bC$-VHS, and a closed multivalued one-form $\eta$ on $X$ such that for any irreducible subvariety $Z$ of $X$, one of the following two statements holds.
		\begin{enumerate}
			\item The restriction of $\eta$  to   $Z$ is non-trivial.
			\item The pullback of $\sigma$ to the normalization $Z_{\rm norm}$ of $Z$ is large and has discrete monodromy.
		\end{enumerate}
	\end{proposition}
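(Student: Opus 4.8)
The plan is to read off both $\sigma$ and $\eta$ from the construction of the reductive Shafarevich morphism in \cite{DY23}. Recall from \cite{DY23} that for $M=M_{\rm B}(\pi_1(X),\GL_r)(\bC)$ the reductive Shafarevich morphism $\mathrm{sh}_M\colon X\to \mathrm{Sh}_M(X)$ exists and that its construction exhibits it as the simultaneous Stein factorization of the factorization map $s_{{\rm fac},r}\colon X\to S_{{\rm fac},r}$ of \Cref{def:reduction ac} and the Shafarevich morphism $\mathrm{sh}_\sigma\colon X\to \mathrm{Sh}_\sigma(X)$ of a single semisimple representation $\sigma\colon\pi_1(X)\to\GL(N,\bC)$ which underlies a $\bC$-VHS and whose monodromy group $\sigma(\pi_1(X))$ is discrete; it is here, in obtaining the discreteness, that non-archimedean Hodge theory enters, exactly in the spirit of \cite{BKT}. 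Since $\rho$ is large and semisimple, for every positive-dimensional irreducible subvariety $W\subset X$ the group $\rho(\mathrm{Im}[\pi_1(W_{\rm norm})\to\pi_1(X)])$ is infinite, so $\mathrm{sh}_M(W)$ is not a point; hence $\mathrm{sh}_M$ has finite fibers, so it is finite, and, having connected fibers, it is an isomorphism. Therefore, for every closed irreducible $W\subset X$,
\[
\text{$W$ is a point}\qquad\Longleftrightarrow\qquad \text{$s_{{\rm fac},r}(W)$ and $\mathrm{sh}_\sigma(W)$ are both points.}
\]

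I next build $\eta$. By \cite[Lemma 1.28]{DY23} the simultaneous Stein factorization $s_{{\rm fac},r}$ is already the simultaneous Stein factorization of finitely many Katzarkov--Eyssidieux reductions $\{s_{\tau_i}\colon X\to S_{\tau_i}\}_{i=1}^{k}$, with each $\tau_i\colon\pi_1(X)\to\GL(r,K_i)$ semisimple over a non-archimedean local field $K_i$ of characteristic $0$. Let $\eta_{\tau_i}$ be the closed multivalued one-form attached to $\tau_i$ by \Cref{thm:KE}, and set $\eta\coloneqq\sum_{i=1}^{k}\eta_{\tau_i}$, which is again a closed multivalued one-form by \Cref{lemma_closed}. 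As the $\eta_{\tau_i}$ are effective, for any irreducible $Z\subset X$ the restriction $\eta|_{Z_{\reg}}$ is trivial if and only if every $\eta_{\tau_i}|_{Z_{\reg}}$ is trivial, which by the equivalence of \cref{item:KZ2} and \cref{item:KZ4} in \Cref{thm:KE} holds if and only if $s_{\tau_i}(Z)$ is a point for every $i$, i.e.\ if and only if $s_{{\rm fac},r}(Z)$ is a point.

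Now fix an irreducible closed $Z\subset X$. If $\eta|_{Z}$ is non-trivial we are in case (1) and there is nothing more to do. Otherwise $s_{{\rm fac},r}(Z)$ is a point by the previous paragraph. Let $W$ be an arbitrary positive-dimensional irreducible subvariety of $Z_{\rm norm}$ and let $W''\subset Z$ be its image in $X$. Then $W_{\rm norm}\to W''_{\rm norm}$ is finite surjective, so $\mathrm{Im}[\pi_1(W_{\rm norm})\to\pi_1(W''_{\rm norm})]$ has finite index, and consequently $\sigma(\mathrm{Im}[\pi_1(W_{\rm norm})\to\pi_1(X)])$ has finite index in $\sigma(\mathrm{Im}[\pi_1(W''_{\rm norm})\to\pi_1(X)])$; in particular one of these is infinite if and only if the other is. Since $W''\subseteq Z$ we have $s_{{\rm fac},r}(W'')\subseteq s_{{\rm fac},r}(Z)=\{\mathrm{pt}\}$, so $s_{{\rm fac},r}(W'')$ is a point; as $W''$ is not a point, the displayed equivalence forces $\mathrm{sh}_\sigma(W'')$ not to be a point, i.e.\ $\sigma(\mathrm{Im}[\pi_1(W''_{\rm norm})\to\pi_1(X)])$ is infinite, and hence so is $\sigma(\mathrm{Im}[\pi_1(W_{\rm norm})\to\pi_1(X)])$. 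Since $W$ was arbitrary, the pullback of $\sigma$ to $Z_{\rm norm}$ is large; and its monodromy group, being a subgroup of the discrete group $\sigma(\pi_1(X))$, is discrete. This is case (2).

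The only substantive input is in the first paragraph, and it is supplied by \cite{DY23}: that the archimedean contributions to the reductive Shafarevich morphism can be packaged into a \emph{single} $\bC$-VHS $\sigma$ (using Noetherianity of the relevant moduli), that this $\sigma$ has \emph{discrete} monodromy (obtained from non-archimedean Hodge theory, as in \cite{BKT}), and that $\mathrm{sh}_M$ is precisely the simultaneous Stein factorization of $s_{{\rm fac},r}$ and $\mathrm{sh}_\sigma$. Granting this, the passage from $s_{{\rm fac},r}$ to the explicit multivalued one-form $\eta$ through \Cref{thm:KE}, and the deduction of the dichotomy (1)/(2), are the routine bookkeeping carried out above; I expect the extraction of this precise statement from \cite{DY23} to be the main point requiring care.
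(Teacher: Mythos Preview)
Your construction of $\eta$ from finitely many Katzarkov--Eyssidieux reductions and the deduction that $\eta|_{Z_{\reg}}$ trivial $\Leftrightarrow$ $s_{{\rm fac},r}(Z)$ is a point match the paper exactly. The genuine gap is in discreteness. You assert that \cite{DY23} produces a $\sigma$ with \emph{globally} discrete monodromy $\sigma(\pi_1(X))$, and then get discreteness on $Z_{\rm norm}$ for free by passing to a subgroup. But \cite[Proposition~3.13]{DY23}, as the paper invokes it, only gives discreteness of $\sigma(\mathrm{Im}[\pi_1(Z_{\rm norm})\to\pi_1(X)])$ \emph{conditionally} on $s_{{\rm fac},r}(Z)$ being a point. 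The mechanism is that $\sigma$ is assembled as a direct sum, over the archimedean places of a number field $k$, of representations $\rho_i:\pi_1(X)\to\GL(r,k)$; when $s_{{\rm fac},r}(Z)$ is a point, the image of $\pi_1(Z_{\rm norm})$ under each $\rho_i$ is bounded at every non-archimedean place and hence lands in $\GL(r,\cO_k)$, which embeds discretely under the product of archimedean embeddings. On all of $X$ this integrality can fail (that is precisely what $s_{{\rm fac},r}$ measures), so $\sigma(\pi_1(X))$ need not be discrete --- think of $k\hookrightarrow\prod_{w}\bC$ versus $\cO_k\hookrightarrow\prod_{w}\bC$. Your caveat that extracting the precise statement from \cite{DY23} ``requires care'' understates the issue: the statement you want is not available.

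Your route to largeness is also different from the paper's, and it leans on the same unverified structural claim. You posit that $\mathrm{sh}_M$ is the simultaneous Stein factorization of $s_{{\rm fac},r}$ and $\mathrm{sh}_\sigma$ and then use that $\mathrm{sh}_M$ is an isomorphism to force $\mathrm{sh}_\sigma(W'')$ to be non-trivial. The paper avoids this entirely: a second output of \cite[Proposition~3.13]{DY23} is that for $Z$ contracted by $s_{{\rm fac},r}$, every semisimple rank-$r$ representation $\tau$ of $\pi_1(X)$ has $p^*\tau$ conjugate to a direct factor of $p^*\sigma$. Taking $\tau=\rho$ shows $p^*\rho$ sits inside $p^*\sigma$, and since $\rho$ is large so is $p^*\sigma$. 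This is cleaner and, crucially, pairs with the same cited result that supplies the conditional discreteness.
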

	\begin{proof} 
		Let $s_{{\rm fac},r}:X\to S_{{\rm fac},r}$ be the fibration defined in \cref{def:reduction ac}.  By  \cite[Proposition 3.13]{DY23},  	  there exist a semisimple representation $\sigma:\pi_1(X)\to \GL(N,\bC)$  underlying  a $\bC$-VHS,  such that for any  closed subvariety $Z$ with $s_{{\rm fac},r} (Z)$  being a point, the following properties hold:
		\begin{enumerate} 
			\item  \label{item:directsume}  let $p:Z_{\rm norm}\to Z$ be the normalization map.   Then the image of $p^*\sigma: \pi_1(Z_{\norm})\to \GL(N,\bC)$ is a discrete subgroup. 
			\item \label{item:max}For each semisimple representation $\tau:\pi_1(X)\to {\rm GL}(r,\bC)$,   $p^*\tau$ is conjugate to a direct factor of  $p^*\sigma:\pi_1(Z_{\norm})\to \GL(N,\bC)$.  
		\end{enumerate}  Since $\rho$ is large, $p^*\rho:\pi_1(Z_{\norm})\to \GL(N,\bC)$ is also large.   Item \ref{item:max} implies that $p^*\sigma:\pi_1(Z_{\norm})\to \GL(N,\bC)$ is large.

		By \cite[Theorem 1.28]{DY23}, there exist  semisimple representations $\{\rho_i:\pi_1(X)\to \GL(r,K_i)\}_{i=1,\ldots,k}$ where each $K_i$ is a non-archimedean local field of characteristic zero such that   the Stein factorization of $(s_{\rho_1},\ldots,s_{\rho_k}):X\to S_{\rho_1}\times \cdots\times S_{\rho_k}$ coincides with $s_{{\rm fac},r}:X\to S_{{\rm fac},r}$. Recall in  \cref{thm:KE}, each $\rho_i$ gives rise to a  multivalued one-form $\eta_i$ on $X$ such that the properties in  \cref{thm:KE} are equivalent. Let $\eta$ be the union of $\eta_1,\ldots,\eta_k$.
		
		Suppose that $Z$ is a closed subvariety of $X$ such that $\eta|_{Z_{\reg}}$ is trivial. Then we need to show that $s_{{\rm fac},r}(Z)$ is not a point. In fact, since $\eta|_{Z_{\reg}}$ is trivial, each $\eta_i|_{Z_{\reg}}$ is trivial. By \cref{thm:KE}, $s_{\rho_i}(Z)$ is a point for each $i$. By the property of the simultaneous Stein factorization, $s_{{\rm fac},r}(Z)$ is also a point. So the above property \eqref{item:directsume} holds, that is, the pullback of $\sigma$ to $Z_{\norm}$ is large and has discrete monodromy.  The proposition is proved.
		%\begin{claim}
		%For any  closed subvariety $Z\subset X$ such that  $s_{{\rm fac},r}(Z)$ is not a point, the restriction of $\eta$ to $Z$ is non-trivial. 
		%\end{claim}   
		%\begin{proof}
		%Assume by contradiction that $\eta|_{Z}$ is trivial. Then  $\eta_i|_Z$ is trivial for each $i$. By  Proposition \ref{thm:KE}, we have $s_{\rho_i}(Z)$ is a point for each $i$. Therefore, $s_{{\rm fac},r}(Z)$ is a point by the property of $s_{{\rm fac},r}$, leading the contradiction. This proves that $\eta|_{Z}$ is non-trivial. The closedness of $\eta$ follows from Proposition \ref{thm:KE} directly. 
		%\end{proof} 
		%The proposition is proved. 
	\end{proof}
	
	Therefore, given any conic Lagrangian cycle $\Lambda\subset T^*X$,   we can apply Proposition \ref{prop:Shafarevich} and Corollary \ref{cor_subvariety} to   iterate the vanishing cycle functor $\Phi_\eta$ to achieve the following. 
	
	\begin{corollary}\label{cor_2properties}
		Let $X$ be a smooth projective variety and let $\rho:\pi_1(X)\to \GL(r,\bC)$ be a semisimple and large representation. Let $\sigma:\pi_1(X)\to \GL(N,\bC)$ be the representation constructed in Proposition \ref{prop:Shafarevich}, and let $L_{\sigma}$ be the associated local system. Then, given any conic Lagrangian cycle $T^*_Z X$, there exist subvarieties $Z_i$ and rational numbers $\mu_i> 0$ with $1\leq i\leq m$ satisfying the following properties. 
		\begin{enumerate}
			\item As intersection numbers in $T^*X$, we have
			\[
			T^*_Z X\cdot T^*_X X=\sum_{1\leq i\leq m} \mu_i T^*_{Z_i} X\cdot T^*_X X.
			\]
			\item For each $i$, the pullback of $L_{\sigma}$ to the normalization $Z_{i, \rm norm}$ of $Z_i$ is large and has discrete monodromy. 
		\end{enumerate}
	\end{corollary}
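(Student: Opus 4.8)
The plan is to prove the statement for an irreducible subvariety $Z$ by induction on $\dim Z$, feeding the dichotomy of \cref{prop:Shafarevich} into the dimension-dropping identity of \cref{cor_subvariety}.

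First I would settle the base case $\dim Z=0$: then $Z$ is a point, $T^*_Z X\cdot T^*_X X=1$, and the pullback of $L_\sigma$ to $Z_{\rm norm}=Z$ is a finite — hence discrete — representation which is large by the convention for a point. One therefore records $m=1$, $Z_1=Z$, $\mu_1=1$.

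For the inductive step, assume $\dim Z=k>0$ and that the statement holds for every irreducible subvariety of dimension $<k$. Apply \cref{prop:Shafarevich} to $Z$. If its second alternative holds, i.e.\ the pullback of $\sigma$ — hence of $L_\sigma$ — to $Z_{\rm norm}$ is large with discrete monodromy, then the conclusion is immediate with $Z_1=Z$, $\mu_1=1$. Otherwise the first alternative holds, so $\eta|_{Z_{\reg}}$ is non-trivial; since $X$ is projective, $Z$ is compact, and $\eta$ is closed, \cref{cor_subvariety} applies and yields finitely many irreducible proper closed subvarieties $W_j\subsetneq Z$ together with $\lambda_j\in\bQ_{\geq 0}$ such that
\[
T^*_Z X\cdot T^*_X X=\sum_j \lambda_j\, T^*_{W_j} X\cdot T^*_X X .
\]
Discarding the indices with $\lambda_j=0$, each remaining $W_j$ satisfies $\dim W_j<k$, so the induction hypothesis furnishes, for each such $j$, irreducible subvarieties $Z_{j,l}$ and rationals $\mu_{j,l}>0$ with $T^*_{W_j}X\cdot T^*_X X=\sum_l \mu_{j,l}\,T^*_{Z_{j,l}}X\cdot T^*_X X$ and with $L_\sigma$ pulling back to a large representation with discrete monodromy on each $Z_{j,l,\rm norm}$. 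Substituting and relabelling the pairs $(Z_{j,l},\,\lambda_j\mu_{j,l})$ as the desired $(Z_i,\mu_i)$ completes the induction.

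Finally I would verify that the recursion is well-founded and finite: each application of \cref{cor_subvariety} strictly lowers the dimension and spawns only finitely many subvarieties, so after at most $\dim Z$ rounds every term has been reduced to one to which the second alternative of \cref{prop:Shafarevich} applies (a point being the extreme case), and the resulting sum is finite. I do not expect a genuine obstacle inside the corollary itself: all the substance already sits in \cref{prop:Shafarevich} — built on the reductive Shafarevich package of \cite{DY23} — and in the Lagrangian and support properties of $\Phi_\eta$ recorded in \cref{prop_L,cor_subvariety}. The only points needing a moment's care are the well-foundedness just mentioned and the terminological identification, used in both cited results, that ``the restriction of $\eta$ to $Z$ is non-trivial'' means precisely ``$\eta|_{Z_{\reg}}$ is non-trivial''.
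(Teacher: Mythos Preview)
Your proof is correct and follows essentially the same approach as the paper, which simply states that one iterates \cref{prop:Shafarevich} and \cref{cor_subvariety} without spelling out the induction. You have made the argument explicit by inducting on $\dim Z$, which is exactly the intended mechanism.
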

	\begin{remark}
		If $\eta=df$ for some holomorphic function $f$ on $X$, then the support of $\Phi_\eta(T^*_{Z}X)$ in $X$ is contained in the critical locus of $f|_{Z}$  in the stratified sense. In particular, $f$ has constant value on each irreducible component of the support of $\Phi_\eta(T^*_{Z}X)$. Consequently, $\Phi_\eta\circ \Phi_\eta(T^*_{Z}X)=\Phi_\eta(T^*_{Z}X)$. 
		However, when $\eta$ is a closed multivalued one-form on $X$, the above identity may not hold even up to a scalar. In fact,  over a small ball in the unbranching locus $X^\circ$, $\eta$ can be considered as a union of closed one-forms $\eta_1, \ldots, \eta_l$ (possibly with multiplicity), and locally we can assume $\eta_i=df_i$. Then locally, $\Phi_\eta=\sum_{1\leq i\leq l}\Phi_{df_i}$. Even though $\Phi_{df_i}\circ \Phi_{df_i}(T^*_{Z}X)=\Phi_{df_i}(T^*_{Z}X)$, we have no control about the terms $\Phi_{df_i}\circ \Phi_{df_j}(T^*_{Z}X)$. 
	\end{remark}
	
	\begin{proof}[Proof of Theorem \ref{thm_main} in the case when $K=\C$ and $\rho$ is semisimple]
		By Corollary \ref{cor_2properties} (1), it is sufficient to show that $T^*_{Z_i} X\cdot T^*_X X\geq 0$, where $Z_i$ satisfies the conditions in Corollary \ref{cor_2properties} (2). By Proposition \ref{prop_geq0}, the above inequality holds. 
	\end{proof}

	\section{Mixed period maps of $\bR$-VMHS and positivity}\label{sec:mixed}
	\subsection{Positivity from the mixed period maps}
	In this subsection, we prove the following analogue of \cref{prop_geq0} for mixed period maps.
	\begin{proposition}\label{prop_linear}
		Let $X$ be a projective manifold with two representations $\{\sigma_i: \pi_1(X)\to \GL(N_i, \C)\}_{i=1, 2}$ such that
		\begin{enumerate}
			\item $\sigma_1$ is semisimple and $L_{\sigma_1}$ underlies a $\C$-VHS;
			\item $L_{\sigma_2}$ underlies a $\bR$-VMHS of weight $-1,0$. 
		\end{enumerate}
		Let $Z$ be an irreducible subvariety of $X$, and let $p:Z_{\norm}\to Z$ be its normalization. Let $\Gamma_1:=\sigma_1({\rm Im}[\pi_1(Z_{\rm norm}\to \pi_1(X)])$ %$\Gamma_2:=\sigma_2({\rm Im}[\pi_1(Z_{\rm norm})\to \pi_1(X)])$ 
		and $\Gamma_2^{ss}:=\sigma^{ss}_2({\rm Im}[\pi_1(Z_{\rm norm})\to \pi_1(X)])$, where $\sigma^{ss}_2: \pi_1(X)\to \GL(N_2, \C)$ is the semisimplification of $\sigma_2$, which underlies a $\bC$-VHS $L_{\sigma_2^{ss}}$.  Assume that 
		\begin{enumerate}
			\item[(i)]     the product of the period map of $p^*\sigma_1$ and the mixed period map of $p^*\sigma_2$, denote by $\widetilde{Z}_{\rm norm}^{\rm univ}\to \cD_1\times \cM$,  has   discrete fibers. Here $\widetilde{Z}_{\rm norm}^{\rm univ}$ is the universal cover of $Z_{\rm norm}$, $\cD_1$ is the period domain of $L_{\sigma_1}$, and $\cM$ is the mixed period domain of $L_{\sigma_2}$.  
			\item[(ii)] The monodromy group  $\Gamma_1$  (resp. $\Gamma_2^{ss}$) acts on $\cD_1$ (resp. $\cD_2$) respectively.  Here $\cD_2$ is the period domain of  the $\bC$-VHS $L_{\sigma_2^{ss}}$.
		\end{enumerate}
		Then, the intersection number $T^*_Z X\cdot T^*_X X\geq 0$. Equivalently, $(-1)^{\dim Z}\chi(Eu_Z)\geq 0$. 
	\end{proposition}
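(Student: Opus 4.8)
The plan is to mimic the proof of Proposition \ref{prop_geq0}, using the constructible function $\delta$ on $Z_{\rm norm}$ from Definition \ref{defn_delta} together with Lemma \ref{lemma_chi}, so that it suffices to exhibit a finite morphism from $Z_{\rm norm}$ to a complex manifold whose cotangent bundle has the appropriate non-positivity, and to check that the relevant pushforward of $\delta$ is CC-effective. The essential new input is that the target is no longer a quotient of a pure period domain: the monodromy group $\Gamma_2 := \sigma_2({\rm Im}[\pi_1(Z_{\rm norm})\to \pi_1(X)])$ need not act discretely on $\cM$, and $\cM$ itself lacks the curvature we want. So as hinted in the introduction and in the statement's hypotheses (i)–(ii), I would replace the naive target by a product involving both the period map of $p^*\sigma_1$ and the \emph{pure} period map attached to $\sigma_2^{ss}$, using the technical Proposition \ref{prop_E} (referenced in the introduction) to package the mixed period map of $p^*\sigma_2$ and the period map of $L_{\sigma_2^{ss}}$ simultaneously.

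First I would reduce to the torsion-free case exactly as in the paragraph after Lemma \ref{lemma_chi}: by \cite[Lemma 8]{Sel} and the lemma comparing $T^*_{\widetilde Z}\widetilde X\cdot T^*_{\widetilde X}\widetilde X$ with $d\, T^*_Z X\cdot T^*_X X$ under a finite étale cover, I may assume that $(\sigma_1\times\sigma_2^{ss})(\pi_1(X))$ — hence $\Gamma_1$ and $\Gamma_2^{ss}$ — are torsion free; by hypothesis (ii) they then act freely and properly discontinuously on $\cD_1$ and $\cD_2$, so $\Gamma_1\backslash\cD_1$ and $\Gamma_2^{ss}\backslash\cD_2$ are complex manifolds with horizontal distributions. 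Next I would form the holomorphic map $\phi: Z_{\rm norm}\to (\Gamma_1\backslash\cD_1)\times(\Gamma_2^{ss}\backslash\cD_2)$ induced by the period map of $p^*\sigma_1$ and the period map of $p^*\sigma_2^{ss}$. The key claim, where Proposition \ref{prop_E} enters, is that hypothesis (i) — discreteness of the fibers of the combined map to $\cD_1\times\cM$ — forces $\phi$ to have discrete (hence, by properness of $Z_{\rm norm}\to X$ and Stein-factorization/Remmert-type arguments as in Lemma \ref{lemma_finite}, finite) fibers: morally, the unipotent part $U$ of the mixed period map contributes no extra freedom once one remembers the $\cD_2$-component, because $U$ sits inside the pure period data via the semisimplification. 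Then, just as in Lemma \ref{lemma_tildep} and Proposition \ref{prop_delta}, I would pull $\delta$ back along the covering $\widetilde Z_{\rm norm}\to Z_{\rm norm}$ determined by $\ker[(\sigma_1\times\sigma_2^{ss})\circ p]$, push forward through the $\Gamma$-equivariant period map on the cover, and descend, invoking Proposition \ref{prop_AMSS} (finiteness of $\widetilde\phi_Z$) to conclude $\phi_*(\delta)$ is CC-effective, i.e. $\phi_*(\delta)=\sum_i (-1)^{\dim Y_i} n_i\, Eu_{Y_i}$ with $n_i>0$ and each $Y_i$ horizontal in the product.

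Finally, to get $(-1)^{\dim Y_i}\chi(Eu_{Y_i})\ge 0$ for each horizontal compact irreducible $Y_i\subset(\Gamma_1\backslash\cD_1)\times(\Gamma_2^{ss}\backslash\cD_2)$, I would invoke Proposition \ref{prop_AW}: the product of two quotients of pure period domains still has non-positive holomorphic bisectional curvature in the horizontal directions (the curvature of a product is block-diagonal), so the same curvature computation as in \cite[Proposition 6.1]{AW} applies verbatim. Combining with Lemma \ref{lemma_chi}, $(-1)^{\dim Z}\chi(Eu_Z)=\chi(\delta)=\chi(\phi_*(\delta))=\sum_i n_i\,(-1)^{\dim Y_i}\chi(Eu_{Y_i})\ge 0$, which by \eqref{eq_Eu} is the desired inequality $T^*_Z X\cdot T^*_X X\ge 0$.

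The main obstacle I anticipate is precisely the passage from hypothesis (i) to finiteness of $\phi$: one must show that discreteness of the fibers of the map to $\cD_1\times\cM$ is inherited by the map to $\cD_1\times(\Gamma_2^{ss}\backslash\cD_2)$, even though $\cM\to\cD_2$ has positive-dimensional affine fibers and $\Gamma_2$ does not act discretely on $\cM$. This is exactly what Proposition \ref{prop_E} is designed to supply — roughly, that the unipotent directions in the mixed period map are controlled by the derivative of the pure period map of the semisimplification — and I expect the proof to hinge on feeding that proposition into an argument that the only way $\phi$ could fail to be finite is if a positive-dimensional subvariety of $Z_{\rm norm}$ were contracted by \emph{both} the $\cD_1\times\cD_2$-factor and the unipotent part, contradicting (i). A secondary technical point is handling the non-normality of $Z$ (the function $\delta$ rather than $Eu_{Z_{\rm norm}}$), but that is dealt with uniformly by the CC-effectivity bookkeeping of Lemmas \ref{lemma_chi}, \ref{lemma_tildep} and Propositions \ref{prop_AMSS}, \ref{prop_delta}, exactly as in the pure case.
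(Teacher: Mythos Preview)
Your overall architecture is right --- reduce to torsion-free monodromy, push $\delta$ forward along a period map to a quotient of $\cD_1\times\cD_2$, and finish with Proposition~\ref{prop_AW}. But there is a genuine gap in how you plan to use Proposition~\ref{prop_E}.

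You claim that Proposition~\ref{prop_E} will show that $\phi:Z_{\rm norm}\to\Gamma\backslash(\cD_1\times\cD_2)$ is \emph{finite}, after which Proposition~\ref{prop_AMSS} would give CC-effectivity. This is wrong on two counts. First, $\phi$ is in general only \emph{proper}, not finite: a positive-dimensional subvariety $W\subset Z_{\rm norm}$ can perfectly well map to a point in $\cD_1\times\cD_2$ while its lift to $\cD_1\times\cM$ still has discrete fibers, since the fiber of $\cM\to\cD_2$ is a positive-dimensional vector space. Your ``moral'' that the unipotent part sits inside the pure data via the semisimplification is exactly backwards --- the semisimplification \emph{forgets} the extension data, which is what the unipotent directions record. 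Second, Proposition~\ref{prop_E} is not a finiteness statement at all: it asserts directly that if $f:X\to B$ is proper, $\gamma$ is CC-effective on $X$, and there is a $\Z^k$-equivariant lift $\tilde f:\widetilde X\to E$ to the total space of a vector bundle $E\to B$ with discrete fibers on $\Supp(\pi_X^*\gamma)$, then $f_*(\gamma)$ is CC-effective.

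The paper's argument runs as follows. One builds covers $\widetilde Z_{\rm norm}\to\widetilde Z_{\rm norm}^{ss}\to Z_{\rm norm}$ corresponding to $\ker(\sigma_1\times\sigma_2)$ and $\ker(\sigma_1\times\sigma_2^{ss})$, and analogous covers $\widetilde X\to\widetilde X^{ss}$. Hypothesis~(i) plus properness shows the map $\widetilde Z_{\rm norm}\to\cD_1\times\cM$ is finite. The deck group $\Gamma_0$ of $\widetilde X\to\widetilde X^{ss}$ acts on the vector bundle $\cD_1\times\cM\to\cD_1\times\cD_2$ by fiberwise translations. One then applies Proposition~\ref{prop_E} locally over balls in $\cD_1\times\cD_2$ with $E=\cD_1\times\cM$, $B=\cD_1\times\cD_2$, $\widetilde X$ and $X=\widetilde X^{ss}$, and $\gamma=(\iota^{ss}\circ\tilde p^{ss})_*(\tilde\delta)$ (CC-effective by the analogue of Lemma~\ref{lemma_tildep}). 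The conclusion is that $(\tilde\phi^{ss})_*(\iota^{ss}\circ\tilde p^{ss})_*(\tilde\delta)$ is CC-effective, hence so is $\phi_*(\delta)$ by descent along the covering $\pi_\Gamma$. From there the end of your argument is correct.
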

	
	Using the same finite covering trick as in the beginning of the previous section, we can reduce to the case when $\Gamma_1$ and $\Gamma_2^{ss}$ are torsion free. We will work under this assumption for the remaining of this subsection. 
	
	Since the image of $\sigma_2: \pi_1(X)\to \GL(N_2, \bR)$ may not be discrete, we will need a technical proposition to deduce desired positivity. Before stating the proposition, we need a lemma. 
	
	\begin{lemma}\label{lemma_ef}
		Let $0\to N\to M\to M'\to 0$ be a short exact sequence of holomorphic vector bundles on a complex manifold $X$. Assume that $M'$ is trivial of rank $r$. Consider $N$ as a complex submanifold of $M$, and denote by $\iota:N\to M$ the inclusion map. Let $C$ be an equi-dimensional subvariety of $M$, and $[C]\in H^{BM}_{2\dim C}(C, \Z)$ be the fundamental class of $C$.
		Then $\iota^*[C]\in H^{BM}_{2\dim C-2r}(C\cap N, \Z)$ can be represented by an effective analytic $(\dim C-r)$-cycle in $C\cap N$, where the pullback class $\iota^*[C]$ is defined as the Borel-Moore homology analogue of \cite[Chapter 8]{Fulton}. 
	\end{lemma}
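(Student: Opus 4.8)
The plan is to factor the Gysin pullback $\iota^{*}$ as an $r$-fold composition of divisorial Gysin maps along smooth hypersurfaces whose normal bundles are trivial, and then to propagate effectivity through the $r$ steps.

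First I would produce a flag of smooth hypersurfaces. Since $M'$ is trivial of rank $r$, the bundle surjection $M\to M'$, composed with a trivialization $M'\cong X\times\C^{r}$ and the projection to $\C^{r}$, gives a holomorphic map $F=(f_{1},\dots,f_{r})\colon M\to\C^{r}$ with $N=F^{-1}(0)$. Over small open subsets of $X$ one can split $0\to N\to M\to M'\to 0$ holomorphically, identifying $M$ locally with $N\times\C^{r}$ so that $f_{1},\dots,f_{r}$ become the coordinate functions on the $\C^{r}$-factor. Hence $f_{1},\dots,f_{r}$ form a regular sequence on $M$, each $M_{k}:=V(f_{1},\dots,f_{k})$ is a smooth closed submanifold, and we obtain a flag
\[
N=M_{r}\xrightarrow{\ \iota_{r}\ }M_{r-1}\xrightarrow{\ \iota_{r-1}\ }\cdots\xrightarrow{\ \iota_{1}\ }M_{0}=M,
\]
in which $M_{k}$ is the reduced divisor $V(f_{k}|_{M_{k-1}})$ inside $M_{k-1}$, so that $\mathcal{N}_{M_{k}/M_{k-1}}$ is trivial (trivialized by $f_{k}$). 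By the compatibility of the Gysin pullback with compositions of regular embeddings, in the Borel--Moore-homology form of the Chapter~8 construction referred to in the statement (see \cite[Chapters~6 and 8]{Fulton}), we have $\iota^{*}=\iota_{r}^{*}\circ\cdots\circ\iota_{1}^{*}$.

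Next I would argue by induction on $k=0,1,\dots,r$ that $\big(\iota_{k}^{*}\circ\cdots\circ\iota_{1}^{*}\big)[C]\in H^{BM}_{2\dim C-2k}(C\cap M_{k},\Z)$ is represented by an effective analytic $(\dim C-k)$-cycle supported on $C\cap M_{k}$. For $k=0$ this is clear, as $[C]=\sum_{i}[C_{i}]$ over the irreducible components of the equi-dimensional variety $C$. For the inductive step, write the level-$(k-1)$ class as $\sum_{j}m_{j}[W_{j}]$ with $m_{j}\in\Z_{>0}$ and $W_{j}\subset C\cap M_{k-1}$ irreducible, and apply $\iota_{k}^{*}$ termwise. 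If $W_{j}\subset M_{k}$, then intersecting with the divisor $M_{k}$ (cf.\ the self-intersection formula, \cite[Chapter~6]{Fulton}) together with the triviality of $\mathcal{N}_{M_{k}/M_{k-1}}$ gives $\iota_{k}^{*}[W_{j}]=c_{1}\big(\mathcal{N}_{M_{k}/M_{k-1}}\big)\cap[W_{j}]=0$. If $W_{j}\not\subset M_{k}$, then $f_{k}|_{W_{j}}$ is a non-zero holomorphic function on the irreducible variety $W_{j}$, defining an effective Cartier divisor, and $\iota_{k}^{*}[W_{j}]$ equals the associated effective Weil-divisor cycle $\big[\operatorname{div}(f_{k}|_{W_{j}})\big]$, which is supported on $W_{j}\cap M_{k}\subset C\cap M_{k}$. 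The sum over $j$ is then an effective cycle, completing the induction.

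Applying the induction at $k=r$, where $M_{r}=N$, yields the claim: $\iota^{*}[C]$ is represented by an effective analytic $(\dim C-r)$-cycle supported on $C\cap N$. The main obstacle is not conceptual but lies in setting up the ingredients correctly --- checking that the $f_{i}$ genuinely cut out a flag of smooth hypersurfaces with trivial normal bundles (this is the local product structure of a subbundle inside its ambient bundle), and invoking the correct Borel--Moore-homology versions of the functoriality and self-intersection formulas for Gysin maps of regular embeddings of complex manifolds. Once these are in place, the effectivity bookkeeping is routine, and the argument degenerates correctly in the extreme cases (for instance $C\subset N$, giving $\iota^{*}[C]=0$, or $C\cap N=\emptyset$).
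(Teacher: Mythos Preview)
Your proof is correct and takes a genuinely different route from the paper's. You slice $\iota$ into $r$ codimension-one regular embeddings $\iota_{k}\colon M_{k}\hookrightarrow M_{k-1}$ along the coordinate hypersurfaces $V(f_{k})$ of the trivialization of $M'$, each with trivial normal line bundle, and then propagate effectivity step by step via the dichotomy: if $W_{j}\not\subset M_{k}$ then $\iota_{k}^{*}[W_{j}]=[\operatorname{div}(f_{k}|_{W_{j}})]$ is an effective Cartier divisor class, while if $W_{j}\subset M_{k}$ then the excess-intersection formula gives $\iota_{k}^{*}[W_{j}]=c_{1}(\mathcal{N}_{M_{k}/M_{k-1}})\cap[W_{j}]=0$. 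All the needed inputs (functoriality of refined Gysin maps under composition, the divisor and excess cases) are standard and carry over to the Borel--Moore setting in which the lemma is stated.

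The paper instead performs the deformation to the normal cone in one step: since $M'$ is trivial, the normal bundle of $N$ in $M$ is $N\times\C^{r}$, and after deformation $C$ is replaced by the conic subvariety $C_{C\cap N}C\subset N\times\C^{r}$. A general \emph{constant} section $\Gamma=N\times\{c\}$ then meets this cone in an effective cycle of the expected dimension, whose projection to $N$ lands in $C\cap N$ (by $\C^{*}$-invariance of the cone) and represents $\iota^{*}[C]$. The paper's argument is terser and more geometric, but it leaves implicit the generic-fiber-dimension reasoning needed to guarantee that a general $c\in\C^{r}$ gives an intersection of the right dimension. Your inductive approach trades that genericity step for $r$ explicit divisor intersections, and has the minor advantage of pointing precisely to the relevant formulas in \cite{Fulton}. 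Both arguments use the triviality of $M'$ in an essential way---yours through the triviality of each $\mathcal{N}_{M_{k}/M_{k-1}}$, the paper's through the availability of global constant sections of the normal bundle.
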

	\begin{proof}
		Since $M'$ is a trivial bundle, the normal bundle of $N$ in $M$ is also trivial. Using the deformation to normal cone \cite[Chapter 5]{Fulton}, we can replace $M$ by a trivial vector bundle over $N$ and $C$ a cone of the new vector bundle $M$. Now, we can take a general global section $\Gamma$ of $M$, and the image of $\Gamma\cap C$ in $N$ is contained in $N\cap C$ and it represents the class $\iota^*[C]$. 
	\end{proof}

	\begin{proposition}\label{prop_E}
		Let $\pi_B: E\to B$ be a holomorphic vector bundle over a complex ball $B$. Suppose that there is a $\Z^k$-action on $E$ by fiberwise translations, i.e., there are sections $s_1, \ldots, s_k$ of the vector bundle, and $(m_1, \ldots, m_k)\in \Z^k$ acts on $E$ by fiberwise addition with $m_1s_1+\cdots +m_ks_k$. 
		Let $\pi_X: \widetilde{X}\to X$ be a $\Z^k$-covering map of complex manifolds. Denote the support of the constructible function $\gamma$ by $\Supp(\gamma)$. Suppose we have a commutative diagram 
		\begin{equation}\label{eq_diag2}
			\xymatrix{
				\widetilde{X}\ar[d]^{\pi_X}\ar[r]^{\tilde{f}}&E\ar[d]^{\pi_B}\\
				X\ar[r]^{f}& B
			}
		\end{equation}
		where $\tilde{f}$ is a $\Z^k$-equivariant holomorphic map. 
		%locally finite morphism, i.e., for any $y\in \widetilde{Y}$, there exists open neighborhood $U$ of $y$ and $V$ of $f(y)$ such that $f: U\to V$ is a finite morphism. 
		%Moreover, we assume that $Y$ can be embedded into some smooth manifold $X$ and the map $f:Y\to B$ can be extend to a map $f_X: X\to B$. 
		If $\gamma$ is a constructible function on $X$ satisfying
		\begin{enumerate}
			\item  for any $e\in E$, $\pi_X^{-1}\Supp(\gamma)\cap \tilde{f}^{-1}(e)$
			is a discrete set,
			\item $f|_{\Supp(\gamma)}: \Supp(\gamma)\to B$ is proper, 
			\item $\gamma$ is CC-effective,
		\end{enumerate}
		then $f_*(\gamma)$ is also CC-effective.%\footnote{The earlier assumption ensures that $\tilde{f}_*\pi_X^*(\lambda)$ is a well-defined constructible function.} 
	\end{proposition}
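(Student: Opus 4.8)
The plan is to factor $f$ through the quotient of $E$ by the $\Z^k$-action. Set $\bar E := E/\Z^k$; since the action is by fibrewise translations (and, as in the situations where the proposition is applied, may be assumed free and properly discontinuous), $\bar E$ is a complex manifold, $\bar\pi_B\colon\bar E\to B$ is a submersion, and $\tilde f$ descends to a holomorphic map $\bar f\colon X\to\bar E$ with $f=\bar\pi_B\circ\bar f$. First I would observe that hypotheses (1)--(2) say exactly that $\bar f$ restricts to a \emph{finite} morphism on $\Supp(\gamma)$: for $e\in E$ with image $\bar e\in\bar E$, the covering projection $\pi_X$ maps $\tilde f^{-1}(e)$ bijectively onto $\bar f^{-1}(\bar e)$ (freeness of the action), carrying $\pi_X^{-1}\Supp(\gamma)\cap\tilde f^{-1}(e)$ onto $\Supp(\gamma)\cap\bar f^{-1}(\bar e)$; so (1) makes every fiber of $\bar f|_{\Supp(\gamma)}$ discrete, while (2) together with the separatedness of $\bar\pi_B$ makes $\bar f|_{\Supp(\gamma)}$ proper, hence finite. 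Consequently, by \cref{prop_AMSS} (applied to the irreducible pieces of $\gamma$), the pushforward $\gamma':=\bar f_*\gamma$ is CC-effective on $\bar E$, and $f_*\gamma=(\bar\pi_B)_*\gamma'$ with $\bar\pi_B$ proper on $\Supp(\gamma')$.

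The problem is thereby reduced to the following: for the submersion $\bar\pi_B\colon\bar E\to B$, any CC-effective $\gamma'$ on $\bar E$ whose support is proper over $B$ has CC-effective pushforward $(\bar\pi_B)_*\gamma'$. The key structural fact is that $B$ is a ball, so the vector bundle $E$ — and hence the relative cotangent bundle $T^*_{\bar E/B}\cong\bar\pi_B^*E^\vee$ — is trivial. Therefore the codifferential of $\bar\pi_B$ exhibits $\bar\pi_B^*T^*B$ as a holomorphic subbundle of $T^*\bar E$ sitting in a short exact sequence
\[
0\longrightarrow \bar\pi_B^*T^*B\longrightarrow T^*\bar E\longrightarrow \bar\pi_B^*E^\vee\longrightarrow 0
\]
with trivial quotient of rank $r=\operatorname{rank}E$ — precisely the setup of \cref{lemma_ef}.

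Next I would invoke Sabbah's formula (\cref{thm_Sabbah}) for $\bar\pi_B$, legitimate because $\bar\pi_B$ is proper over $\Supp(\gamma')$:
\[
CC\big((\bar\pi_B)_*\gamma'\big)=u_{2*}\,u_1^*\big(CC(\gamma')\big),
\]
where $u_1\colon\bar\pi_B^*T^*B\hookrightarrow T^*\bar E$ is the subbundle inclusion above and $u_2\colon\bar\pi_B^*T^*B\to T^*B$ is the projection. The pullback $u_1^*$ along the regular embedding $u_1$ is precisely the operation $\iota^*$ of \cref{lemma_ef}, so applying that lemma to each irreducible component of the effective cycle $CC(\gamma')$ shows that $u_1^*(CC(\gamma'))$ is represented by an effective cycle supported in $CC(\gamma')\cap\bar\pi_B^*T^*B$. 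Finally $u_2$ is proper on this support — its $\bar E$-coordinate lies over $\Supp(\gamma')$, which is proper over $B$ — so $u_{2*}$ carries an effective cycle to an effective cycle. Hence $CC\big((\bar\pi_B)_*\gamma'\big)$ is effective, i.e.\ $f_*\gamma=(\bar\pi_B)_*\gamma'$ is CC-effective (recall $B$ is smooth).

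The hard part is the first paragraph: extracting from the discreteness hypothesis (1), after passing to the quotient $\bar E$, an honest finite morphism on $\Supp(\gamma)$, which is what unlocks \cref{prop_AMSS} and collapses the statement to the ``projection'' case $\bar\pi_B$. Once that is in hand the rest is formal — the triviality of $T^*_{\bar E/B}$ (coming from $B$ being a ball) feeds directly into \cref{lemma_ef}, and Sabbah's formula finishes it. I would also take care over two minor points: that \cref{thm_Sabbah} applies with $\bar\pi_B$ only proper over the support (the stratification of the base $B$ in that theorem is allowed to contain $0$-dimensional strata, so strata of $\bar E$ mapping to points cause no trouble), and the several properness verifications used along the way.
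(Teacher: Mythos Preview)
Your factorization through $\bar E=E/\Z^k$ is the natural first thought, but it hides a genuine gap: nothing in the hypotheses guarantees that the $\Z^k$-action on $E$ is free or properly discontinuous. The $\Z^k$ acts freely and properly discontinuously on $\widetilde X$ (that is what ``$\Z^k$-covering'' means), but on $E$ it acts through the sections $s_1,\dots,s_k$, and the subgroup these generate in a fibre may well fail to be discrete --- in which case $E/\Z^k$ is not even Hausdorff, let alone a complex manifold, and none of the machinery (analytic pushforward, \cref{prop_AMSS}, Sabbah's formula, \cref{lemma_ef}) is available on it. Your parenthetical ``as in the situations where the proposition is applied, may be assumed free and properly discontinuous'' is exactly what is \emph{not} true: in the application to \cref{prop_linear} the group is $\Gamma_0=\ker[\,\im(\sigma_1\times\sigma_2)\to\im(\sigma_1\times\sigma_2^{ss})\,]$, a subgroup of the unipotent radical $U(\bR)$ acting by translations on the fibres of $\cM\to\cD_2$, and there is no reason for it to be discrete there. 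Indeed, the non-discreteness of the mixed monodromy is precisely the obstacle that \cref{prop_E} is designed to overcome (cf.\ the discussion of Case~(iii) in the introduction).

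The paper circumvents this by never forming $E/\Z^k$. Instead it pulls $T^*E$ back to $\widetilde X$ along $\tilde f$ and takes the quotient \emph{there}: since $\Z^k$ acts freely and properly discontinuously on $\widetilde X$, the bundle $\tilde f^*T^*E/\Z^k$ is an honest holomorphic vector bundle over $X$, sitting in the short exact sequence
\[
0\longrightarrow f^*T^*B\xrightarrow{\ \iota\ }\tilde f^*T^*E/\Z^k\xrightarrow{\ v_1\ } T^*X,\qquad
0\longrightarrow f^*T^*B\longrightarrow \tilde f^*T^*E/\Z^k\longrightarrow f^*E^\vee\longrightarrow 0.
\]
One checks (using hypothesis~(1) upstairs on $\widetilde X$) that $v_1^{-1}(\Lambda)$ has the expected dimension, then applies \cref{lemma_ef} to this sequence over $X$, and finally identifies the resulting effective cycle with $CC(f_*\gamma)$ via \cref{thm_Sabbah} using $v_1\circ\iota=u_1$. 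The moral is that the ``intermediate space'' carrying the geometry is not a quotient of $E$ but a bundle over $X$ obtained by quotienting the pullback --- this is where the discreteness hypothesis on $\widetilde X$, rather than on $E$, does the work.
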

	\begin{proof}
		%First, notice that property (2) implies that $\tilde{f}_*\pi_X^*(\lambda)$ is a constructible function.
		Since being CC-effective is a local property, by possibly shrinking $B$, we can assume that $\gamma$ is constructible with respect to a finite Whitney stratification of $X$. Furthermore, without loss of generality, we can assume that $\gamma=(-1)^{\dim Z}Eu_{Z}$ for some irreducible closed analytic subvariety $Z$ of $X$. Then $CC(\gamma)=T^*_Z X$, which we denote by $\Lambda$. We will construct some an effective conic Lagrangian cycle on $B$ and show that it is equal to $CC(f_*(\gamma))$. 
		
		Consider the maps of vector bundles on $\widetilde{X}$:
		\[
		\pi_X^*f^*T^*B=\tilde{f}^*\pi_B^*T^*B\to \tilde{f}^*T^*E\to T^*\widetilde{X},
		\]
		induced by $\tilde{f}$ and $\pi_B$. Since $\tilde{f}$ is $\Z^k$-equivariant, there is a natural $\Z^k$-action on $\tilde{f}^*T^*E$. There are also obvious $\Z^k$-actions on $\pi_X^*f^*T^*B$ and $T^*\widetilde{X}$, and moreover, the above two maps are $\Z^k$ equivariant. Taking quotient of the $\Z^k$-action, we have a commutative diagram
		\[
		\xymatrix{
			\tilde{f}^*\pi_B^*T^*B\ar[r]^{\tilde{\iota}}\ar[d]&\tilde{f}^*T^*E\ar[r]^{\tilde{v}_1}\ar[d]^{\pi_E}& T^*\widetilde{X}\ar[d]^{\pi_T}\\
			f^*T^*B\ar[r]^{\iota}&\tilde{f}^*T^*E/\Z^k \ar[r]^{\;\;v_1}&T^*X
		}
		\]
		where the first row consists of vector bundles on $\widetilde{X}$, the second row consists of vector bundles on $X$, all horizontal maps are vector bundle maps and vertical maps are covering maps. 
		
		\begin{claim}\label{claim0}
			The set-theoretic preimage $v_1^{-1}(\Lambda)$ is an analytic subset of dimension at most $\dim E$. 
		\end{claim}
		\begin{proof}[Proof of the claim]
			Let $\widetilde{\Lambda}=\pi_T^{-1}(\Lambda)$. Then $\widetilde{\Lambda}=T^*_{\pi_X^{-1}Z}\widetilde{X}$ is a conic Lagrangian (not necessarily connected) subvariety of $T^*\widetilde{X}$.
			
			Consider the maps induced by $\tilde{f}: \widetilde{X}\to E$:
			\[
			T^*\widetilde{X}\xleftarrow{\tilde{v}_1}\tilde{f}^*T^*E\xrightarrow{\tilde{v}_2} T^*E.
			\]
			It follows from \cite[Proposition 4.9]{Kas0} that there is a locally finite conic Lagrangian cycle $\Lambda'$ in $T^*E$ such that $\tilde{v}_1^{-1}(\widetilde{\Lambda})\subset v_2^{-1}(\Lambda')$. The property (1) of the proposition implies that the preimage of any point under $\tilde{v}_2$ is also a discrete set. Thus, 
			\[
			\dim \tilde{v}_1^{-1}(\widetilde{\Lambda})\leq \dim v_2^{-1}(\Lambda')\leq \dim \Lambda'=\dim E,
			\]
			where $\dim E$ is the dimension of the total space of $E$. 
			Since $\tilde{v}_1^{-1}(\widetilde{\Lambda})$ is a covering space of $v_1^{-1}(\Lambda)$, it follows that $\dim v_1^{-1}(\Lambda)\leq \dim E$. %Here, $\dim E$ is the dimension of the total space of $E$, not the rank of $E$. 
		\end{proof}
		
		Consider the following maps induced by $f: X\to B$:
		\[
		T^*X \xleftarrow{u_1} f^*T^*B\xrightarrow{u_2}T^*B.
		\]
		Notice that the composition $\tilde{v}_1\circ \tilde{\iota}: (\tilde{f}\circ \pi_B)^*T^*B\to T^*\widetilde{X}$ is equal to the natural pullback map of cotangent bundle induced by $\tilde{f}\circ \pi_B$. Thus, the quotient map $v_1\circ \iota: f^*T^*B\to T^*X$ is equal to the natural pullback map $u_1$. 
		
		Notice that on $E$, we have a short exact sequence of holomorphic vector bundles
		\[
		0\to \pi_B^*T^*B\to T^*E\to \pi_B^*E^\vee\to 0,
		\]
		where $E^\vee$ is the dual vector bundle of $E$. 
		Taking the pullback to $\widetilde{X}$, we have
		\[
		0\to \tilde{f}^*\pi_B^*T^*B\to \tilde{f}^*T^*E\to \tilde{f}^*E^\vee\to 0.
		\]
		Taking the quotient by the $\Z^k$-action, we have a short exact sequence of vector bundles on $X$:
		\[
		0\to f^*T^*B\to \tilde{f}^*T^*E/\Z^k\to f^*E^\vee\to 0. 
		\]
		
		Now, take the scheme-theoretic preimage $v_1^{-1}(\Lambda)$, which, by Claim \ref{claim0}, can be regarded as an effective analytic $\dim(E)$-cycle in $\tilde{f}^*T^*E/\Z^k$. By the above argument and Lemma \ref{lemma_ef}, there exists an effective analytic $\dim(B)$-cycle $\Lambda''$ on $\iota^{-1}v_1^{-1}(\Lambda)$, representing the class $\iota^{*}[v_1^{-1}(\Lambda)]$. Since $f$ is a proper map, so is $u_2$, and hence $u_{2\star}(\Lambda'')$ is a well-defined $\dim(B)$-analytic cycle in $T^*B$. 
		\begin{claim}
			The analytic $\dim(B)$-cycle $u_{2\star}(\Lambda'')$ is equal to $CC(f_*(\gamma))$. In particular, $u_{2\star}(\Lambda'')$ does not depend on the choice of $\Lambda''$ we made when applying Lemma \ref{lemma_ef}. 
		\end{claim}
		\begin{proof}[Proof of the claim]
			%The claim follows easily from Subsection \ref{sec_push}, once 
			Let us give the precise Borel-Moore homology groups in which each cycle/class is defined. As in \cref{sec_push}, there exist Whitney stratifications $\mathscr{S}$ and $\mathscr{S}'$ of $X$ and $B$ respectively such that $\gamma$ is constructible with respect to $\mathscr{S}$ and $f$ is a stratified map with respect to $\mathscr{S}$ and $\mathscr{S}'$. By possibly shrinking $B$, we can assume that both $\mathscr{S}$ and $\mathscr{S}'$ are finite stratifications. 
			
			Now, we can regard $CC(\gamma)=T^*_Z X=\Lambda$ as an element in $H^{BM}_{2\dim X}(T^*_{\mathscr{S}}X, \Z)$. Then the scheme-theoretic preimage $v_1^{-1}(\Lambda)$, considered as an analytic $\dim(E)$-cycle, represents the element $v_1^*(\Lambda)\in H^{BM}_{2\dim E}(v_1^{-1}(T^*_{\mathscr{S}}X), \Z)$. Since $v_1 \circ \iota=u_1$, we have
			\[
			\iota^*v_1^*(\Lambda)=u_1^*(\Lambda)=[\Lambda'']\in H^{BM}_{2\dim B}(u_1^{-1}(T^*_{\mathscr{S}}X), \Z).
			\]
			Then, 
			\begin{equation}\label{eq_BM}
				u_{2*}u_1^*(\Lambda)=[u_{2\star}(\Lambda'')]\in H^{BM}_{2\dim B}(T^*_{\mathscr{S}'}B, \Z).
			\end{equation}
			By \cref{sec_push}, $u_{2\star}(\Lambda'')$ and $CC(f_*(\gamma))$ represent the same homology classes in $H^{BM}_{2\dim B}(T^*_{\mathscr{S}'}B, \Z)$. Since $T^*_{\mathscr{S}'}B$ is the union of finitely many $\dim(B)$-dimensional irreducible varieties, $H^{BM}_{2\dim B}(T^*_{\mathscr{S}'}B, \Z)$ is generated by their fundamental classes, and hence the equality \eqref{eq_BM} implies that $u_{2\star}(\Lambda'')=CC(f_*(\gamma))$ as analytic $\dim(B)$-cycles. 
		\end{proof}
		By the construction, $u_{2\star}(\Lambda'')$ is evidently effective. Thus, $f_*(\gamma)$ is CC-effective.  The proposition is proved. 
	\end{proof}
	\begin{proof}[Proof of \cref{prop_linear}] 
		Under the assumptions of Proposition \ref{prop_linear}, we can construct the following commutative diagram analogous to diagram \eqref{eq_diagram},
		\begin{equation}\label{eq_big}
			\xymatrix{
				\widetilde{Z}_{\norm}\ar[d]\ar[r]^{\tilde{p}}&\widetilde{Z}\ar[d]\ar[r]^{\iota}&\widetilde{X}\ar[d]\ar[r]^{\tilde\phi}&\cD_1\times \cM\ar[d]^{\pi_M}\\
				\widetilde{Z}_{\norm}^{ss}\ar[d]\ar[r]^{\tilde{p}^{ss}}&\widetilde{Z}^{ss}\ar[r]^{\iota^{ss}}&\widetilde{X}^{ss}\ar[r]^{\tilde\phi^{ss}}&\cD_1\times \cD_2\ar[d]^{\pi_{\Gamma}}\\
				Z_{\norm}\ar[rrr]^{\phi}&&&\Gamma\backslash\cD_1\times \cD_2
			}
		\end{equation}
		where
		\begin{itemize}
			\item $\Gamma$ is the image of the composition 
			\[
			\pi_1(Z_{\norm})\xrightarrow{\tau} \pi_1(X)\xrightarrow{\sigma_1\times \sigma_2^{ss}} \GL(N_1, \C)\times \GL(N_2, \C),
			\]
			\item $\widetilde{X}$ and $\widetilde{X}^{ss}$ are the covering spaces of $X$ induced by $\ker[\sigma_1\times \sigma_2]$ and $\ker[\sigma_1\times \sigma_2^{ss}]$ respectively;
			
			\item $\widetilde{Z}_{\norm}$ and $\widetilde{Z}_{\norm}^{ss}$ are the covering spaces of $Z_{\norm}$ induced by $\ker[ (\sigma_1\times \sigma_2)\circ \tau]$ and $\ker[(\sigma_1\times \sigma_2^{ss})\circ\tau]$ respectively;
			
			\item choosing a lifting $\widetilde{Z}_{\norm}^{ss}\to \widetilde{X}^{ss}$ of $Z_{\norm}\to X$,  let $\widetilde{Z}_{\norm}^{ss}\xrightarrow{\tilde{p}^{ss}}\widetilde{Z}^{ss}\xrightarrow{\iota^{ss}}\widetilde{X}^{ss}$ be the factorization such that $\tilde{p}^{ss}$ is surjective and $\iota^{ss}$ is injective;
			
			\item choosing a lifting $\widetilde{Z}_{\norm}\to \widetilde{X}$ of $\widetilde{Z}_{\norm}^{ss}\to \widetilde{X}^{ss}$,  let $\widetilde{Z}_{\norm}\xrightarrow{\tilde{p}}\widetilde{Z}\xrightarrow{\iota}\widetilde{X}$ be the factorization such that $\tilde{p}^{ss}$ is surjective and $\iota^{ss}$ is injective;
			%$\widetilde{Z}$ is the image of $\widetilde{Z}_{\norm}$ in $\widetilde{X}$, and $\widetilde{Z}^{ss}$ is the image of $\widetilde{Z}_{\norm}^{ss}$ in $\widetilde{X}^{ss}$,
			%\item $\iota$ and $\iota^{ss}$ are the inclusion maps;
			\item $\tilde\phi$, $\tilde{\phi}^{ss}$ and $\phi$ are the period maps.
			%\item $\tilde{p}$ and $\tilde{p}^{ss}$ are normalization maps. 
		\end{itemize}
		Notice that all vertical maps are covering maps except $\pi_M$.  Moreover, $\phi:Z_{\rm norm}\to \Gamma\backslash\cD_1\times \cD_2$,   $\tilde{\phi}\circ \iota\circ \tilde{p}:\widetilde{Z}_{\rm norm}\to \cD_1\times \cM$ and $\tilde{\phi}^{ss}\circ \iota^{ss}\circ \tilde{p}^{ss}:\widetilde{Z}_{\rm norm}^{ss}\to \cD_1\times \cM$ are all proper holomorphic maps. By definition, $\Gamma$ is a subgroup of $\Gamma_1\times \Gamma_2$. Since both $\Gamma_1$ and $\Gamma_2$ are discrete and torsion-free, so is $\Gamma$.

		\begin{claim}\label{prop_leff}
			Let $\delta$ be the constructible function on $Z_{\norm}$ as defined in Definition \ref{defn_delta}. Under the above notations, $\phi_{*}(\delta)$ is CC-effective. 
		\end{claim}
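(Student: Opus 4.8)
The plan is to follow the proof of \cref{prop_delta} from the pure case, with one essential change: the appeal to \cref{prop_AMSS} must be replaced by the technical \cref{prop_E}. The point is that, whereas in the pure case the period map is finite (\cref{lemma_finite}), here $\phi\colon Z_{\norm}\to\Gamma\backslash(\cD_1\times\cD_2)$ may have positive–dimensional fibers: assumption (i) controls only the \emph{combined} pure-and-mixed period map ${\widetilde{Z}_{\norm}}^{\rm univ}\to\cD_1\times\cM$, while the projection $\pi_M\colon\cD_1\times\cM\to\cD_1\times\cD_2$ collapses the unipotent directions of $\cM$ (those coming from the radical $U$). Since CC-effectivity is local (\cref{lemma_local}) and $\Gamma\backslash(\cD_1\times\cD_2)$ is a complex manifold, I would verify the claim over the image of each sufficiently small ball $B\subset\cD_1\times\cD_2$ that embeds into the quotient.

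The preparatory step is the analogue of \cref{lemma_tildep}: pulling $\delta$ back to the covers $\widetilde{Z}_{\norm}$ and $\widetilde{Z}_{\norm}^{ss}$ to get $\tilde\delta,\tilde\delta^{ss}$, one checks that near its support the pushforward $(\iota^{ss}\circ\tilde p^{ss})_*(\tilde\delta^{ss})$ equals the pullback along the covering $\widetilde{X}^{ss}\to X$ of $(\iota\circ p)_*(\delta)=\iota_*\bigl((-1)^{\dim Z}Eu_Z\bigr)$, which is CC-effective by \cref{lemma_chi}; hence its extension by zero $\gamma_0$ to $\widetilde{X}^{ss}$ is CC-effective and supported in $\widetilde{Z}^{ss}$. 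Over $B$, I would use the standard description of the mixed period domain — the lemma that $\pi\colon\cM\to{\rm Gr}^W\cM$ is a holomorphic vector bundle with $U(\C)$ acting by fibrewise translations — to identify $E_B:=\pi_M^{-1}(B)$ with a (trivial) holomorphic vector bundle over $B$, on which the part of the monodromy lying in $U$, a finitely generated free abelian group $\Z^k$ by the covering-map structure of diagram~\eqref{eq_big}, acts by fibrewise translations. Setting $X_B:=(\tilde\phi^{ss})^{-1}(B)\subset\widetilde{X}^{ss}$ and $\widetilde{X}_B:=\tilde\phi^{-1}(E_B)\subset\widetilde{X}$ gives a $\Z^k$-covering $\pi_X\colon\widetilde{X}_B\to X_B$ together with $f:=\tilde\phi^{ss}|_{X_B}\colon X_B\to B$ and a $\Z^k$-equivariant $\tilde f:=\tilde\phi|_{\widetilde{X}_B}\colon\widetilde{X}_B\to E_B$, and I put $\gamma:=\gamma_0|_{X_B}$.

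Next I would check the three hypotheses of \cref{prop_E} for $(\gamma,f,\tilde f)$. CC-effectivity of $\gamma$ is immediate; properness of $f|_{\Supp\gamma}$ follows from the properness recorded in \eqref{eq_big}; and the discreteness of $\pi_X^{-1}\Supp(\gamma)\cap\tilde f^{-1}(e)$ reduces to the fibers of $\widetilde{Z}_{\norm}\to\cD_1\times\cM$ being discrete — since $\widetilde{Z}_{\norm}$ is the quotient of ${\widetilde{Z}_{\norm}}^{\rm univ}$ by $\ker[(\sigma_1\times\sigma_2)\circ\tau]$, which acts trivially on $\cD_1\times\cM$, this is exactly assumption (i). Then \cref{prop_E} yields that $f_*(\gamma)$ is CC-effective on $B$, and base change for proper pushforward of constructible functions along the Cartesian square extracted from~\eqref{eq_big} identifies $f_*(\gamma)$ with the restriction of $\phi_*(\delta)$ to the image of $B$; since $B$ was arbitrary, \cref{lemma_local} finishes the argument.

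The hard part is the bookkeeping: one must simultaneously juggle $L_{\sigma_1}$, the $\bR$-VMHS $L_{\sigma_2}$, and its semisimplification $L_{\sigma_2^{ss}}$, realize the collapsed unipotent directions as a trivial vector bundle over the pure quotient, and — most delicately — confirm both that the monodromy inside $U$ genuinely is a lattice $\Z^k$ acting by translations and that hypothesis~(1) of \cref{prop_E} holds. This last point is where assumption~(i) is indispensable, and it is the structural reason the naive argument of \cref{prop_delta} (using finiteness of $\phi$ together with \cref{prop_AMSS}) cannot work directly and must be upgraded to \cref{prop_E}.
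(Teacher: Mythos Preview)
Your proposal is correct and follows essentially the same route as the paper: pull back $\delta$ to $\widetilde{Z}_{\norm}^{ss}$, use the analogue of \cref{lemma_tildep} to get CC-effectivity of $(\iota^{ss}\circ\tilde p^{ss})_*(\tilde\delta)$ on $\widetilde{X}^{ss}$, then apply \cref{prop_E} to the upper-right square of \eqref{eq_big} (restricted over a small ball in $\cD_1\times\cD_2$) and descend via the Cartesian bottom rectangle. The paper identifies the relevant free abelian group as $\Gamma_0:=\ker\bigl[\im(\sigma_1\times\sigma_2)\to\im(\sigma_1\times\sigma_2^{ss})\bigr]$, which is exactly the deck group of $\widetilde{X}\to\widetilde{X}^{ss}$ you describe, and verifies hypothesis~(1) of \cref{prop_E} by first upgrading assumption~(i) to finiteness of $\tilde\phi\circ\iota\circ\tilde p$ (via properness plus discrete fibers); your direct reduction to assumption~(i) is equivalent.
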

		\begin{proof}
			Let $\tilde{\delta}$ be the pullback of $\delta$ to $\widetilde{Z}_{\norm}^{ss}$.  %Let $\tilde{\delta}^{ss}=\tilde{p}^{ss}_*(\tilde{\delta}^{ss}_{\norm})$ and $\tilde{\delta}=\tilde{p}_*(\tilde{\delta}_{\norm})$. 
			As an analog of Lemma \ref{lemma_tildep}, $\tilde{p}^{ss}_*(\tilde{\delta})$ is CC-effective. 
			Since $\iota^{ss}$ is a closed embedding, $(\iota^{ss}\circ\tilde{p}^{ss})_*(\tilde{\delta})$ is also CC-effective. 
			
			Let $\widetilde{Z}_{\rm norm}^{\rm univ}\to \cD_1\times \cM$ be  the product of the period map of $p^*\sigma_1$ and the mixed period map of $p^*\sigma_2$.  By the assumptions in Item (i)  it has  discrete fibers. Note that  it factors through the proper map  $\tilde{\phi}\circ \iota\circ \tilde{p}:\widetilde{Z}_{\rm norm}\to \cD_1\times \cM$ is proper and the \'etale cover $\widetilde{Z}_{\rm norm}^{\rm univ}\to  \widetilde{Z}_{\rm norm} $. It implies that $\tilde{\phi}\circ \iota\circ \tilde{p}: \widetilde{Z}_{\norm}\to \cD_1\times \cM$ is a finite morphism.  Since $\tilde{\phi}$ is surjective, $\iota\circ \tilde{p}: \widetilde{Z}\to \cD_1\times \cM$ is also a finite morphism. 
			
			By definition, the deck transfomration group of the normal covering map $\widetilde{Z}_{\norm}^{ss}\to Z_{\norm}$ is equal to $\Gamma$. Thus, the bottom rectangle of \eqref{eq_big} is Cartesian. 
			%			\begin{equation}\label{eq_product}
				%				\widetilde{Z}_{\norm}^{ss}=Z_{\norm}\times_{\Gamma\backslash \cD_1\times \cD_2} \cD_1\times \cD_2.
				%			\end{equation}
			Since $Z_{\norm}$ is projective, $\phi$ is proper, and hence $\tilde{\phi}^{ss}\circ \iota^{ss}\circ \tilde{p}^{ss}: \widetilde{Z}^{ss}_{\norm}\to \cD_1\times \cD_2$ is also proper. Since $\tilde{p}^{ss}$ is surjective, $\tilde{\phi}^{ss}\circ\iota^{ss}: \widetilde{Z}^{ss}\to  \cD_1\times \cD_2$ is also proper. 
			
			%By definition, $\widetilde{X}\to \widetilde{X}^{ss}$ is a normal covering map with deck transformation group being $\ker[\sigma_1\times \sigma_2^{ss}]/\ker[\sigma_1\times \sigma_2]$, which is an abelian group. 
			
			Denote the kernel of the natural map $\im(\sigma_1\times \sigma_2)\to \im (\sigma_1\times \sigma_2^{ss})$ by $\Gamma_0$. Then the monodromy action of $\Gamma_0$ on $\cD_1\times \cD_2$ is trivial. By definition, $\Gamma_0$ is also equal to the deck transformation group of the normal covering map $\widetilde{X}\to \widetilde{X}^{ss}$.  Since the period maps are compatible with the monodromy actions, the map $\tilde{\phi}$ is $\Gamma_0$-equivariant. As well-known facts for mixed period domain and mixed period maps (see e.g. \cite[Section 2]{Her99}),  the map $\cM\to \cD_2$, and hence $\pi_M: \cD_1\times \cM\to \cD_1\times \cD_2$, are vector bundles. Moreover, $\Gamma_0$ acts on the fibers of $\pi_M$ by linear translations. 
			
			%By the above arguments together with \Cref{claim:translate},
			Therefore, the constructible function $(\iota^{ss}\circ\tilde{p}^{ss})_*(\tilde{\delta})$ satisfies the axiom of Proposition \ref{prop_E} with respect to the most upper-right square of \eqref{eq_big} restricted to a small ball in $\cD_1\times \cD_2$. Since  CC-effectiveness is a local property, Proposition \ref{prop_E} implies that 
			\[
			\tilde{\phi}^{ss}_*(\iota^{ss}\circ\tilde{p}^{ss})_*(\tilde{\delta})=\tilde{\phi}^{ss}_*\iota^{ss}_{*}\tilde{p}^{ss}_*(\tilde{\delta})
			\]
			is CC-effective. 
			%			Since $\widetilde{Z}^{ss}_{\norm}$ is a connected component of the fiber product \eqref{eq_product}, near the image of $\tilde{\phi}^{ss}\circ \iota^{ss}\circ \tilde{p}^{ss}$, 
			Since the bottom rectangle of \eqref{eq_big} is Cartesian,
			$\pi_{\Gamma}^*(\phi_{*}(\delta))$ is equal to $\tilde{\phi}^{ss}_*\iota^{ss}_{*}\tilde{p}^{ss}_*(\tilde{\delta})$. Thus, $\pi_{\Gamma}^*(\phi_{*}(\delta))$ is CC-effective.
			%near the image of $\tilde{\phi}^{ss}\circ \iota^{ss}\circ \tilde{p}^{ss}$. 
			%Since $\pi_{\Gamma}^*(\phi_{*}(\delta))$ is invariant under the deck transformation of the normal covering map $\pi_{\Gamma}$, $\pi_{\Gamma}^*(\phi_{*}(\delta))$ is CC-effective. 
			Since $\pi_{\Gamma}$ is a covering map, we can conclude that $\phi_{*}(\delta)$ is CC-effective. 
		\end{proof}

		We can follow the same arguments as in the proof of Proposition \ref{prop_geq0}. By  \Cref{prop_leff} and \cref{prop_AW}, we know that 
		$\chi(\phi_{*}(\delta))\geq 0$. The equations \eqref{eq_01} and \eqref{eq_02} also apply here,  and hence we have 
		\[
		(-1)^{\dim Z}\chi(Eu_Z)=\chi(\delta)=\chi(\phi_{*}(\delta))\geq 0.\qedhere
		\]
	\end{proof}
	
	\subsection{Techniques from the linear Shafarevich conjecture}
	In this subsection, we show the mixed analogue of \cref{prop:Shafarevich},
	using the techniques in the proof of the linear Shafarevich conjecture in \cite{EKPR12}. 
	
	\begin{proposition} \label{prop:linear}
		Let $X$ be a projective manifold and let $\rho:\pi_1(X)\to \GL(r,\C)$ be a large  representation.   Then there exists 
		\begin{itemize}
			\item  a  semisimple representation $\sigma_1:\pi_1(X)\to \GL(N_1,\bC)$  underlying a $\bC$-VHS;
			\item  a representation $\sigma_2:\pi_1(X)\to \GL(N_2,\bC)$   such that the associated local system $L_{\sigma_2}$ underlies an $\bR$-VMHS of weight $-1,0$;
			\item and a multivalued {closed} holomorphic one-form $\eta$ on $X$ 
		\end{itemize}such that for any irreducible subvariety $Z$ of $X$, when $\eta|_{Z_{\rm reg}}$ is trivial,  
		\begin{thmlist}   
			\item  \label{item:1}
			let $p:Z_{\rm norm}\to Z$ be the normalization. Then  $p^*\sigma_1:\pi_1(Z_{\rm norm})\to \GL(N_1,\bC)$ and $p^*\sigma_2^{ss}:\pi_1(Z_{\rm norm})\to \GL(N_2,\bC)$ both have discrete images.  %Here $\sigma_2^{ss}$ is the semisimplification of $\sigma_2$. 
			%	 	\item \label{item:2} let $\Gamma_1:=\sigma_1({\rm Im}[\pi_1(Z_{\rm norm}\to \pi_1(X)])$, $\Gamma_2:=\sigma_2({\rm Im}[\pi_1(Z_{\rm norm})\to \pi_1(X)])$ and $\Gamma_2^{ss}:=\sigma^{ss}_2({\rm Im}[\pi_1(Z_{\rm norm})\to \pi_1(X)])$. Then   the natural   homomorphism $\Gamma_2\twoheadrightarrow \Gamma^{ss}_2$ is surjective,  whose kernel  $N$ is an abelian group.  
			\item \label{item:3} Letting $\phi_1:\widetilde{Z}^{\rm univ}_{\rm norm}\to   \cD_1$ be the period map of $L_{\sigma_1}$ and $\phi_2:\widetilde{Z}^{\rm univ}_{\rm norm}\to   \cM$ be the  mixed period map of $L_{\sigma_2}$, then 
			\begin{align}\label{eq:s3}
				( \phi_1,\phi_2):\widetilde{Z}^{\rm univ}_{\rm norm}\to    \cD_1\times   \cM
			\end{align}  has discrete fibers. %Here $\widetilde{Z}^{\rm univ}_{\rm norm}$ is the universal cover of $Z_{\norm}$.
			%		\item Let $\cD_2:=\cD({\rm Gr}_0^W\cV)\times \cD({\rm Gr}_1^W\cV)$ be  the period domain corresponding to the graded parts of $\sigma_2$.  Then $  \cM\to   \cD_2$ is an affine bundle with fibers denoted by $V$. 
		\end{thmlist}	 
	\end{proposition}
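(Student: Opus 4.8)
The plan is to mirror the strategy of Proposition \ref{prop:Shafarevich}, but working with the \emph{full} (not necessarily reductive) representation $\rho$ and invoking the linear Shafarevich machinery of \cite{EKPR12} rather than just the reductive one of \cite{DY23}. First I would construct $\sigma_1$ exactly as in Proposition \ref{prop:Shafarevich}: take the factorization map $s_{{\rm fac},r}:X\to S_{{\rm fac},r}$ associated to all semisimple non-archimedean representations of rank $r$, and use \cite[Proposition 3.13]{DY23} to produce a semisimple $\sigma_1:\pi_1(X)\to\GL(N_1,\bC)$ underlying a $\bC$-VHS such that, for any $Z$ with $s_{{\rm fac},r}(Z)$ a point, $p^*\sigma_1$ has discrete image and $p^*\tau$ is a direct factor of $p^*\sigma_1$ for every semisimple $\tau$. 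The multivalued one-form $\eta$ is the union $\eta_1\cup\cdots\cup\eta_k$ of the Katzarkov–Eyssidieux one-forms attached to a finite family $\{\rho_i\}$ of semisimple non-archimedean representations whose simultaneous Stein factorization is $s_{{\rm fac},r}$; as in Proposition \ref{prop:Shafarevich}, triviality of $\eta|_{Z_{\reg}}$ forces $s_{{\rm fac},r}(Z)$ to be a point, which is the hypothesis under which the conclusions must hold. This already gives \cref{item:1} for $\sigma_1$.

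The genuinely new input is $\sigma_2$. Here I would apply the main construction of \cite{EKPR12} (the linear Shafarevich conjecture): the kernel of $\rho$ and the Malcev/Tannakian formalism produce a tower of representations whose associated graded is $\rho^{ss}$-controlled, and from the unipotent part one extracts an $\bR$-VMHS of weight $-1,0$ whose semisimplification is governed by the $\bC$-VHS attached to $\rho^{ss}$. Concretely, one takes a Zariski-dense representation factoring through a group with reductive part $R$ and unipotent radical $U$, and the mixed Hodge-theoretic lift of Eyssidieux et al. endows the relevant local system with an $\bR$-VMHS structure. I would arrange that $\sigma_2^{ss}$ factors through $\sigma_1$ (by enlarging $N_1$ if necessary, or by including the VHS of $\rho^{ss}$ among the direct factors guaranteed by \cite[Prop.\ 3.13]{DY23}), so that discreteness of $p^*\sigma_1$ yields discreteness of $p^*\sigma_2^{ss}$, completing \cref{item:1}.

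For \cref{item:3}, the point is that the fibers of $(\phi_1,\phi_2)$ on $\widetilde Z^{\rm univ}_{\rm norm}$ are controlled by the kernel of the monodromy of $p^*(\sigma_1\times\sigma_2)$ together with the period maps being immersive in the appropriate stratified sense. Since $\rho$ is large and its Shafarevich morphism is the identity, and since $s_{{\rm fac},r}(Z)$ is a point, the representation $p^*(\sigma_1\times\sigma_2)$ must be "large on $Z_{\rm norm}$" after the construction — i.e., it detects all positive-dimensional subvarieties — which by the argument of \cite[Proof of Theorem 1.9]{AW} (applied to the pure part) combined with the fact that the mixed period map separates the unipotent directions forces $(\phi_1,\phi_2)$ to have discrete fibers. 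The main obstacle, I expect, is precisely this last step: one must verify that triviality of $\eta|_{Z_{\reg}}$ not only makes $s_{{\rm fac},r}(Z)$ a point (handling the reductive directions) but also that the \emph{mixed} period map $\phi_2$ has no positive-dimensional fibers beyond those already killed by $\phi_1$ — that is, the unipotent/extension data of the $\bR$-VMHS varies nontrivially along any curve on which $\sigma_1$ is constant. This is exactly the kind of statement that requires the simultaneous control of pure and mixed periods and is presumably where Proposition \ref{prop_E} (or the structural results of \cite{EKPR12} on the Shafarevich morphism of $\rho$ versus that of $\rho^{ss}$) will be invoked.
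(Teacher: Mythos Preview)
Your construction of $\eta$ and $\sigma_1$ via $s_{{\rm fac},r}$ and \cite[Proposition 3.13]{DY23} matches the paper exactly. For $\sigma_2$ the paper is more specific than your sketch: it takes $\sigma_2$ to be the monodromy of $\sum_i(\mathbb{D}_1(\bV_i)+\overline{\mathbb{D}_1(\bV_i)})$ from \cite[Lemma 5.4]{EKPR12}, where the $\bV_i$ are tensor products of rank-$r$ $\bC$-VHS. For discreteness of $p^*\sigma_2^{ss}$, the paper does \emph{not} arrange that $\sigma_2^{ss}$ factors through $\sigma_1$; instead it replaces each $\bV_j=L_{\varrho_1}\otimes\cdots\otimes L_{\varrho_m}$ by a tensor product of larger local systems $L_n$ (built using Simpson's ubiquity to place VHS representatives at all archimedean places of the number field) so that each $p^*L_n$ is conjugate to a direct sum $\bigoplus_w p^*\varrho_{j_n,w}$ with $\cO_k$-integral, hence discrete, image. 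Your idea is morally in the right direction but the mechanism is different.

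The genuine gap is in \cref{item:3}. Your proposed route---show $p^*(\sigma_1\times\sigma_2)$ is large on $Z_{\rm norm}$ and invoke the argument of \cite[Proof of Theorem 1.9]{AW}---does not go through. That argument proves finiteness of a period map $Z_{\rm norm}\to\Gamma\backslash\cD$ into a \emph{quotient} of a pure period domain by a discrete group; for the mixed period map there is in general no such discrete quotient (this is precisely the obstacle flagged in the introduction as the first ``extra difficulty'' of Case (iii)), so you cannot descend $\phi_2$ and argue properness. The paper's argument is entirely different and uses the Shafarevich-morphism tower of \cite{EKPR12}: by \cite[Lemma 3.30]{DY23} and \cite[Lemma 5.5]{EKPR12} the maps $(s_{{\rm fac},r}\circ\pi_0,\tilde\varphi_1):\widetilde{X_M^0}\to S_{{\rm fac},r}\times\cD_1$ and $(\tilde\varphi_2,{\rm sh}_M\circ\pi_1):\widetilde{X_M^1}\to\cM\times{\rm Sh}_M(X)$ have fibers with compact connected components; taking generalized Stein factorizations and quotienting by the Galois actions produces morphisms ${\rm sh}_M$ and ${\rm sh}_M^1$ on $X$, and \cite[Proposition 3.10]{EKPR12} identifies ${\rm sh}_M^1$ with the Shafarevich morphism ${\rm sh}_H$ of the intersection $H$ of kernels of \emph{all} rank-$r$ linear representations. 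Since $\rho$ is large and $H\subset\ker\rho$, ${\rm sh}_H$ is the identity; unwinding the tower then shows $(s_{{\rm fac},r}\circ\pi_X,\varphi_1,\varphi_2):\widetilde X^{\rm univ}\to S_{{\rm fac},r}\times\cD_1\times\cM$ has discrete fibers, and restricting to $\widetilde Z_{\rm norm}^{\rm univ}$ with $s_{{\rm fac},r}(Z)$ a point gives \cref{item:3}. Note also that Proposition \ref{prop_E} plays no role in this proof---it is used only afterwards, in the proof of Proposition \ref{prop_linear}.
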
 
	\begin{proof}
		Let $s_{{\rm fac},r}:X\to S_{{\rm fac},r}$ be the factorization map defined in Definition \ref{def:reduction ac}. By the same arguments in the proof of Proposition \ref{prop:Shafarevich}, there exists a closed multivalued one-form $\eta$ on $X$ such that for any closed subvariety $Z$, $\eta|_{Z_{\rm reg}}$  is trivial if and only if $s_{{\rm fac},r}(Z)$ is a point. 
		From now on, we will assume that $s_{{\rm fac},r}(Z)$ is a point.  
		
		In \cite[Proposition 3.13]{DY23}, the authors constructed 
		\begin{itemize}
			\item  a  semisimple representation $\sigma_1:\pi_1(X)\to \GL(N_1,\bC)$   underlying a $\bC$-VHS;
			\item  semisimple representations $\{\rho_i:\pi_1(X)\to \GL(r,k)\}_{i=1,\ldots,n}$ where $k$ is a number field,
		\end{itemize}
		with the following properties.
		\begin{enumerate}
			\item  \label{item:conjugate2}The image of $p^*\rho_i:\pi_1(Z_{\norm})\to \GL(r,k)$ is contained in $\GL(r,\cO_k)$, where $\cO_k$ is the ring of integer of $k$. Moreover,   the direct sum
			$$
			\bigoplus_{i=1}^{k} \bigoplus_{w\in {\rm Ar}(k)}p^*\rho_{i,w}:\pi_1(Z_{\rm norm})\to \prod\GL(r,\bC)			
			$$ 
			is conjugate to $p^*\sigma_1$, where ${\rm Ar}(k)$ is the set of archimedean places of $k$ and $\rho_{i,w}:=w\rho_i$ for each $w\in {\rm Ar}(k)$.  In particular, $p^*\sigma_1:\pi_1(Z_{\rm norm})\to \GL(N_1,\bC)$ is a $\bC$-VHS with discrete monodromy.
			\item  \label{item:conjugate3} Each geometric connected component of $M$ contains some $[\varrho_{i,w}]$. 
			\item \label{item:conjugate} For any semisimple representation $\tau:\pi_1(X)\to \GL(r,\bC)$ such that $[\tau]$ is in the same geometric component of $[\varrho_{i,w}]$, $p^*\tau$ is conjugate to $p^*\rho_{i,w}$.
		\end{enumerate}
		The representation $\sigma_2:\pi_1(X)\to \GL(N_2,\bR)$ underlying $\bR$-VMHS   of weight $-1,0$ is constructed in \cite[Lemma 5.4]{EKPR12}, and let us briefly recall it. We define $M^{\rm VHS}$ to be the set of semisimple representations $\pi_1(X)\to \GL(r,\bC)$ underlying a $\bC$-VHS. Define $\mathcal{T}_M^{\rm VHS}$ to be the set of    the tensor product $V_1\otimes\cdots\otimes V_n$ with each $V_i$ in $M^{\rm VHS}$.  In \cite[Lemma 5.4]{EKPR12},  $\sigma_2$ is defined to be the monodromy representation of  $\sum_{i=1}^{\ell} (\mathbb{D}_1(\bV_{i})+\overline{\mathbb{D}_1(\bV_i)}))$ where $\{\bV_i\}_{i=1,\ldots,\ell}$ are certain objects in $\mathcal{T}_M^{\rm VHS}$. Here  $\mathbb{D}_1(\bV_i)$ is the 1-step $\bC$-MVHS (hence of weight length 1) constructed in \cite[Definition 2.11]{EKPR12} whose graded part is a direct sum of $\bV_i$ (cf. also \cite[Theorem 3.15]{ES11}),    and $\overline{\mathbb{D}_1(\bV_i)}$ a $\bC$-VMHS which is the conjugate of $\mathbb{D}_1(\bV_i)$. Hence $L_{\sigma_2}$ is an $\bR$-VMHS of weight length 1.
		
		By the definition of $\mathcal{T}^{\rm VHS}$, we know that there exists $\{\varrho_n:\pi_1(X)\to \GL(r,\bC)\}_{n=1,\ldots,m}$ underlying $\bC$-VHS such that 
		$
		V_j=L_{\varrho_1}\otimes\cdots\otimes L_{\varrho_m}.
		$  By \Cref{item:conjugate3}, there exists $\varrho_{j_n,w_n}$ such that $[\varrho_{j_n,w_n}]$ is in the same geometric component of $[\varrho_n]$. 
		For any $w\in {\rm Ar}(k)$, by Simpson's ubiquity \cite[Theorem 3]{Simpson}, we know that there exists a semisimple representation $\varrho_{j_n,w}^{\rm vhs}:\pi_1(X)\to \GL(r,\bC)$ underlying a $\bC$-VHS such that  $[\varrho_{j_n,w}^{\rm vhs}]$ is in the same geometric connected component of  $[\varrho_{j_n,w}]$ for each $w\in {\rm Ar}(k)$.  Let $L_n$ be the local system associated to the direct sum representation
		$$
		\bigoplus_{w\in {\rm Ar}(k),w\neq w_n}\varrho_{j_n,w}^{\rm vhs}\oplus\varrho_n.
		$$ 
		Then $L_n$ is a $\bC$-VHS. Moreover,  by \Cref{item:conjugate},
		$p^*L_n$ is isomorphic to the local system corresponding to   the representation
		$$
		\bigoplus_{w\in {\rm Ar}(k)}	p^*\varrho_{j_n,w}:\pi_1(Z_{\rm norm})\to \prod\GL(r,\bC).
		$$
		By \Cref{item:conjugate2}, we know that $p^*L_n$ has discrete monodromy.  Note that  $\bV_j$ is a direct factor of  $L_1\otimes\cdots\otimes L_m$.   We can replace each $\bV_j$  by  $L_1\otimes\cdots\otimes L_m$ and \cite[Lemma 5.4]{EKPR12} will still holds.   In this case,    $p^*{\sigma^{ss}_2}$ has discrete image by our construction.   
		\Cref{item:1} is proved. 
		
		Let $M:=M_{\rm B}(\pi_1(X),\GL_r)(\bC)$. 	We define $ \widetilde{H_M^0}$   to be the intersection of the kernels of all semisimple representations $\pi_1(X)\to \GL(r,\bC)$. Let  $\widetilde{H_M^1}\subset \widetilde{H_M^0}$ be the intersection of   $ \widetilde{H_M^0}$ and the kernels of the monodromy representation of $\mathbb{D}_1(L_\sigma)$ with $\sigma\in \mathcal{T}_M^{\rm VHS}$. Let $\pi_X:\widetilde{X}^{\rm univ} \to X$ be the universal covering map. Denote by $\widetilde{X_M^i}:=\widetilde{X}^{\rm univ}/\widetilde{H_M^i}$ for $i=0,1$ and let $\pi_i:\widetilde{X_M^i}\to X$ be the covering map. 
		In \cite[Lemma 3.30]{DY23}, it is proved that 
		\begin{claim}
			Each connected component of the fiber of the holomorphic map \begin{align}\label{eq:s1}
				(s_{{\rm fac},r}\circ \pi_0,\tilde{\varphi}_1):\widetilde{X_M^0}\to S_{{\rm fac},r}\times \cD_1 
			\end{align}   is compact.  Here $\tilde{\varphi}_1:\widetilde{X_M^0}\to \cD_1$ is the period map of $L_{\sigma_1}$. 
		\end{claim}		 
		By the generalized Stein factorization discussed in \Cref{sec_Stein}, the set $\widetilde{S}_M$  of connected components of fibres of   can be endowed with the structure of a normal complex space such that  \eqref{eq:s1} factors through a proper holomorphic fibration $\Psi_0:\widetilde{X_M^0}\to \widetilde{S^0_M}$. The Galois group $\pi_1(X)/\widetilde{H_M^0}$  induces a properly discontinuous action on $\widetilde{S^0_M}$ such that $\Psi_0$ is equivariant. By \cite[Proposition 3.13]{DY23}, the \emph{reductive Shafarevich morphism}  ${\rm Sh}_M:X\to {\rm Sh}_M(X)$  of $M$ is the quotient of $\Psi_0$ by this action $\pi_1(X)/\widetilde{H_M^0}$. Namely, for any subvariety $Z$ of $Z$, ${\rm Sh}_M(Z)$ is a point if and only if $\tau({\rm Im}[\pi_1(Z_{\rm norm})\to \pi_1(X)])$ is finite for any  semisimple representation $\tau:\pi_1(X)\to \GL(r,\bC)$. 
		
		Note that the mixed period map $\varphi_2:\widetilde{X}^{\rm univ}\to \cM$ of $L_{\sigma_2}$ factors through $\tilde{\varphi}_2:\widetilde{X_M^1}\to \cM$.  
		It is proved in \cite[Lemma 5.5]{EKPR12} that each connected component of the fiber of the holomorphic map \begin{align}\label{eq:s2}
			(\tilde{\varphi}_2,{\rm sh}_M\circ \pi_1):\widetilde{X_M^1}\to\cM\times {\rm Sh}_M(X) \end{align}  
		is compact.    By the generalized Stein factorization again, the set $\widetilde{S ^1_M}$  of connected components of fibres of   can be endowed with the structure of a normal complex space such that  \eqref{eq:s2} factors through a proper holomorphic fibration $\Psi_1:\widetilde{X_M^1}\to \widetilde{S^1_M}$. The Galois group $\pi_1(X)/\widetilde{H_M^1}$  induces a properly discontinuous action on $\widetilde{S_M^1}$ such that $\Psi_1$ is equivariant.  Let  ${\rm sh}^1_M:X\to {\rm Sh}^1_M(X)$  be the quotient of $\Psi_1$ by this action $\pi_1(X)/\widetilde{H_M^1}$.  Let $H$ be the intersection of the kernels of all linear representations $\rho:\pi_1(X)\to \GL(r,\bC)$.   A crucial fact proven  in \cite[Proposition 3.10 \& p.1549]{EKPR12} is that,  ${\rm sh}^1_M:X\to {\rm Sh}^1_M(X)$ is the Shafarevich morphism ${\rm sh}_H:X\to {\rm Sh}_H(X)$ of $(X,H)$.  
		
		\begin{claim}
			${\rm sh}_H:X\to {\rm Sh}_H(X)$ is the identity map.
		\end{claim}
		\begin{proof}
			Note that $H\subset \ker\rho$. Hence there is an injection 
			$$
			\pi_1(X)/\ker\rho\to \pi_1(X)/H.
			$$
			Since $\rho:\pi_1(X)\to \GL(r,\bC)$ is  a large representation, 	for any closed subvariety $Z\subset X$,  
			the image $\pi_1(Z_{\rm norm}) \to \pi_1(X)/\ker\rho$ 
			is an infinite group. Hence $ 	{\rm Im}[\pi_1(Z_{\rm norm})\to \pi_1(X)/H]$ is also infinite.  Therefore, the fibers of ${\rm sh}_H:X\to {\rm Sh}_H(X)$  are zero dimensional. Since ${\rm sh}_H$ has connected fibers, and ${\rm Sh}_H(X)$ is normal, it follows that  ${\rm sh}_H$  is the identity map. The claim is proved. 
		\end{proof}
		Therefore, ${\rm sh}^1_M:X\to {\rm Sh}^1_M(X)$ is the identity map. By our construction of ${\rm sh}_M^1$ and ${\rm sh}^0_M$, we conclude that  
		\begin{align} 
			(s_{{\rm fac},r}\circ\pi_X, {\varphi}_1,{\varphi}_2):\widetilde{X}^{\rm univ}\to S_{{\rm fac},r}\times  \cD_1\times   \cM
		\end{align}  has discrete fibers, where $\varphi_1:\widetilde{X}^{\rm univ}\to \cM$ is the period map of $L_{\sigma_1}$. 
		Since $s_{{\rm fac},r}(Z)$ is a point,  it follows that  
		\begin{align*} 
			( \phi_1,\phi_2):\widetilde{Z}^{\rm univ}_{\rm norm}\to    \cD_1\times   \cM
		\end{align*}  has discrete fibers.  \Cref{item:3} is proved.  We complete the proof of the proposition. 
	\end{proof}

	\subsection{Proof of Theorem \ref{thm_main} in the case when $K=\C$ and $\rho$ is linear}
	After proving Propositions \ref{prop_linear} and \ref{prop:linear}, we are in the same senario as in the semisimple case. So we only give a sketch. 
	\begin{proof}[Proof of Theorem \ref{thm_main} in the case when $K=\C$ and $\rho$ is not necessarily semisimple] We  apply \cref{prop:linear} to construct a   semisimple representation $\sigma_1:\pi_1(X)\to \GL(N_1,\bC)$   underlying a $\bC$-VHS, 
		a    representation $\sigma_2:\pi_1(X)\to \GL(N_2,\bR)$  underlying a $\bR$-VMHS of weight $-1,0$, and  a multivalued   one-form $\eta$ on $X$ such that they satisfies the properties therein.   We  use the vanishing cycle functor $\Phi_\eta$ and apply \cref{cor_subvariety} repeatedly so that we reduce the proof to the case where $\eta|_{Z}$ is trivial. Then the properties in \cref{prop:linear} are fulfilled.  We then apply \cref{prop_linear} to conclude that $T_Z^*X \cdot T_X^* X\geq 0$.
	\end{proof}

%	\bibliography{biblio}
%	\bibliographystyle{amsalpha}

	\providecommand{\bysame}{\leavevmode\hbox to3em{\hrulefill}\thinspace}
	\providecommand{\MR}{\relax\ifhmode\unskip\space\fi MR }
	% \MRhref is called by the amsart/book/proc definition of \MR.
	\providecommand{\MRhref}[2]{%
		\href{http://www.ams.org/mathscinet-getitem?mr=#1}{#2}
	}
	\providecommand{\href}[2]{#2}

\end{document}